\newcommand{\isomto}{\overset{\sim}{\to}}
\newcommand{\longisomto}{\overset{\sim}{\longrightarrow}}
\newcommand{\longto}{\longrightarrow}
\newcommand*\rel@kern[1]{\kern#1\dimexpr\macc@kerna}
\newcommand*\widebar[1]{%
  \begingroup
  \def\mathaccent##1##2{%
    \rel@kern{0.8}%
    \overline{\rel@kern{-0.8}\macc@nucleus\rel@kern{0.2}}%
    \rel@kern{-0.2}%
  }%
  \macc@depth\@ne
  \let\math@bgroup\@empty \let\math@egroup\macc@set@skewchar
  \mathsurround\z@ \frozen@everymath{\mathgroup\macc@group\relax}%
  \macc@set@skewchar\relax
  \let\mathaccentV\macc@nested@a
  \macc@nested@a\relax111{#1}%
  \endgroup
}
\DeclareMathOperator\Frac{Frac}
\DeclareMathOperator\Hom{Hom}
\DeclareMathOperator\End{End}
\DeclareMathOperator\Spec{Spec}
\DeclareMathOperator\Art{Art}
\DeclareMathOperator\rk{rk}
\DeclareMathOperator\Pic{Pic}
\DeclareMathOperator\Cov{Cov}
\DeclareMathOperator\Br{Br}
\DeclareMathOperator\GL{GL}
\DeclareMathOperator\SO{SO}
\DeclareMathOperator\rO{O}
\DeclareMathOperator\Gal{Gal}
\DeclareMathOperator\Ext{Ext}
\DeclareMathOperator\Tor{Tor}
\DeclareMathOperator\Fil{Fil}
\DeclareMathOperator\Ab{Ab}
\DeclareMathOperator\Nm{Nm}
\DeclareMathOperator\tr{tr}
\DeclareMathOperator\colim{colim}
\DeclareMathOperator\cocolim{lim}
\DeclareMathOperator\Frob{Frob}
\def\bQ{{\mathbf{Q}}} \def\bZ{{\mathbf{Z}}} 
\def\bF{{\mathbf{F}}} \def\bG{{\mathbf{G}}} \def\bR{{\mathbf{R}}}
\def\bC{{\mathbf{C}}}  
\def\bA{{\mathbf{A}}} 
\def\cO{{\mathcal{O}}} \def\cU{{\mathcal{U}}} 
  \def\cL{{\mathcal{L}}} 
 \def\cD{{\mathcal D}} 
\def\cK{{\mathcal K}}  \def\cX{{\mathcal{X}}}
\def\cW{{\mathcal W}}
 \def\rR{{\rm R}} \def\rH{{\rm H}}
\def\fm{{\mathfrak{m}}} \def\fX{{\mathfrak{X}}}
\def\ab{{\rm ab}}
\def\id{{\rm id}}
\def\et{{\rm \acute{e}t}}
\def\fl{{\rm fl}}
\def\nr{{\rm nr}}
\def\rec{{\rm rec}}
\def\dR{{\rm dR}}
\def\can{{\rm can}}
\def\crys{{\rm crys}}
\def\NS{{\rm NS}}
\theoremstyle{plain}
\newtheorem{theorem}{Theorem}[section]
\newtheorem{corollary}[theorem]{Corollary}
\newtheorem*{corollary*}{Corollary}
\newtheorem{lemma}[theorem]{Lemma}
\newtheorem{bigthm}{Theorem}
\newtheorem{proposition}[theorem]{Proposition}
\theoremstyle{definition}
\newtheorem*{definition*}{Definition}
\newtheorem{remark}[theorem]{Remark}
\newtheorem*{question*}{Question}
\begin{document}

\title
 {Ordinary K3 surfaces over a finite field}

\author
 {Lenny Taelman}

\begin{abstract}
 We give a description of the category of ordinary K3 surfaces over a finite field in terms of linear algebra data over $\bZ$. This gives an analogue for K3 surfaces of Deligne's description of the category of ordinary abelian varieties over a finite field, and refines earlier work  by N.O.~Nygaard and J.-D.~Yu. 
 
Our main result is conditional on a conjecture on potential semi-stable reduction of K3 surfaces over $p$-adic fields. We give unconditional versions for K3 surfaces of large Picard rank and for K3 surfaces of small degree.
 \end{abstract}

\maketitle

\section*{Introduction}

\subsection*{Statement of the main results}
A K3 surface $X$ over a perfect field $k$ of characteristic $p$ is called \emph{ordinary} if it satisfies the following equivalent conditions:
\begin{enumerate}
\item the Hodge and Newton polygons of $\rH^2_\crys(X/W(k))$ coincide,
\item the Frobenius endomorphism of $\rH^2(X,\cO_X)$ is a bijection,
\item the formal Brauer group  of $X$ (see \cite{ArtinMazur77}) has height $1$.
\end{enumerate}
If $k$ is finite, then these are also equivalent with $|X(k)| \not\equiv 1 \bmod p$. Building on \cite{ArtinMazur77} and \cite{DeligneIllusie81}, Nygaard \cite{Nygaard83} has shown that such an ordinary $X$ has a canonical lift $X_\can$ over the ring of Witt vectors $W(k)$. 

%In this paper we use this canonical lift to give a `linear algebra' description of the groupoid of ordinary K3 surfaces over a finite field $\bF_q$, refining earlier work of Nygaard \cite{Nygaard83b} and Yu \cite{Yu12}. The main result, Theorem \ref{bigthm:equivalence} below, is an analogue for K3 surfaces of Deligne's description \cite{Deligne69} of the category of ordinary abelian varieties over a finite field. 

Choose an embedding $\iota\colon W(\bF_q) \to \bC$. Then with every ordinary K3 surface over $\bF_q$ we can associate a complex K3 surface $X^\iota_\can := X_\can \otimes_{W(\bF_q)} \bC$ and an integral
lattice
\[
	M :=  \rH^2(X_{\can}^\iota,\,\bZ).
\]
Using the Kuga-Satake construction, Nygaard \cite{Nygaard83} and Yu \cite{Yu12} have shown that there exists a (necessarily unique) endomorphism $F$ of $M\otimes_\bZ \bZ[\tfrac{1}{p}]$ such that for every $\ell\neq p$ the canonical isomorphism
\[
	M\otimes \bZ_\ell \longisomto \rH^2_\et(X_{\bar \bF_q},\bZ_\ell)
\]
matches $F$ with the geometric Frobenius $\Frob$ on \'etale cohomology (see also \S~\ref{sec:definition-of-F}).
We have:
\begin{enumerate}
\item[(M1)] the pairing $\langle -,- \rangle$ on $M$ is unimodular, even, and of signature $(3,19)$;
\item[(M2)] $\langle Fx, Fy \rangle = q^2 \langle x,  y\rangle$ for every $x,y\in M$.
\end{enumerate}
From Deligne's proof of the Weil conjectures for K3 surfaces \cite{Deligne72} we also know 
\begin{enumerate}
\item[(M3)] the endomorphism $F$ of $M\otimes \bC$ is semi-simple and all its eigenvalues have absolute value $q$.
\end{enumerate}
Our first result is an integral $p$-adic property of the pair $(M,F)$.

\begin{bigthm}\label{bigthm:properties}
The endomorphism $F$ preserves the $\bZ$-module $M$ and satisfies
\begin{enumerate}
\item[(M4)] the $\bZ_p[F]$-module $M\otimes \bZ_p$ decomposes as 
$M^{0} \oplus M^{1} \oplus M^{2}$
with 
\begin{enumerate}
\item $FM^s=q^s M^s$, for all $s$,
\item $M^0$, $M^1$ and $M^2$ are free $\bZ_p$-modules of rank $1$, $20$, $1$, respectively.
\end{enumerate}
\end{enumerate}
\end{bigthm}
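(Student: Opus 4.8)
The plan is to recognise the whole statement as a $p$-adic assertion about $M\otimes\bZ_p$ and to obtain the decomposition by transporting the slope structure of crystalline cohomology to the Betti lattice through Nygaard's canonical lift. First I would dispose of the behaviour away from $p$: for each $\ell\neq p$ the endomorphism $F$ acts on $M\otimes\bZ_\ell\cong\rH^2_\et(X_{\bar\bF_q},\bZ_\ell)$ as the geometric Frobenius $\Frob$, so it already preserves that lattice. Hence both the integrality of $F$ and property (M4) reduce to a single claim at $p$, namely the existence of the decomposition $M\otimes\bZ_p=M^0\oplus M^1\oplus M^2$ with $FM^s=q^sM^s$; once this is in hand, $F$ preserves $M\otimes\bZ_p$ and therefore all of $M$.

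The starting point at $p$ is crystalline cohomology. Because $X$ is ordinary, $\rH^2_\crys(X/W(\bF_q))$ is an ordinary $F$-crystal, and its slope filtration splits already over $W(\bF_q)$: there is a decomposition $\rH^2_\crys(X/W(\bF_q))=U^0\oplus U^1\oplus U^2$ into sub-$F$-crystals of pure slopes $0,1,2$, free over $W(\bF_q)$ of ranks $1,20,1$ (the slope multiplicities are the Hodge numbers $h^{2,0},h^{1,1},h^{0,2}$), with $\phi|_{U^s}=p^s\phi_s$ for a $\sigma$-semilinear automorphism $\phi_s$. Writing $a=[\bF_q:\bF_p]$ one computes $\phi^a|_{U^s}=q^s\phi_s^a$, where $\phi_s^a$ is a $W(\bF_q)$-linear automorphism; thus the slope-$s$ piece carries a Frobenius acting as $q^s$ times a unit, which is exactly the shape we want for (M4).

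Next I would transport this to $M\otimes\bZ_p$. The generic fibre $X_{\can,K}$ over $K=W(\bF_q)[\tfrac1p]$ has good reduction $X$, so $T:=\rH^2_\et(X_{\can,\bar K},\bZ_p)$ is a crystalline $G_K$-representation, and the Betti--\'etale comparison over $\bC$ through $\iota$ identifies $T=M\otimes\bZ_p$ compatibly with $F$. The key input from the canonical lift \cite{Nygaard83} is that its Hodge filtration on $\rH^2_{\dR}(X_{\can}/W(\bF_q))=\rH^2_\crys(X/W(\bF_q))$ coincides with the slope filtration $0\subset U^2\subset U^2\oplus U^1\subset\rH^2_\crys$ (the canonical lift is characterised by this coincidence of Hodge and Newton \emph{filtrations}, refining ordinarity). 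Consequently the filtered $\phi$-module $\rH^2_\crys=\bigoplus_sU^s$ splits as a direct sum of its slope pieces, with both Frobenius and filtration respecting the grading. Since $K$ is unramified over $\bQ_p$ and the Hodge--Tate weights lie in $\{0,1,2\}$, I would invoke the integral crystalline comparison of Fontaine--Laffaille (valid for $p\geq5$) to conclude that this splitting is realised by a $G_K$-equivariant integral decomposition $M\otimes\bZ_p=M^0\oplus M^1\oplus M^2$, with $M^s$ free of ranks $1,20,1$ and with geometric Frobenius acting on $M^s$ as $q^s$ times a $\bZ_p$-linear automorphism, i.e.\ $FM^s=q^sM^s$. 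For the excluded small primes $p\in\{2,3\}$, where the Fontaine--Laffaille range fails, I would instead route the argument through the ordinary Kuga--Satake abelian variety already used by Nygaard and Yu to define $F$: Deligne's theory of ordinary abelian varieties (equivalently the splitting of the connected--\'etale sequence of the Serre--Tate canonical lift) provides the integral slope decomposition of $\rH^1\otimes\bZ_p$ with no range hypothesis, and the decomposition of $M\otimes\bZ_p$ follows by functoriality of the Kuga--Satake construction.

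The main obstacle is to obtain a genuine integral \emph{direct sum} rather than a mere filtration, and in particular to separate the slope-$1$ and slope-$2$ parts. Modulo $p$ the eigenvalues of $F$ of positive slope all reduce to $0$, so the characteristic polynomial factors as $(T-\bar\alpha_0)\,T^{21}$ and the usual coprime-idempotent argument splits off only the unit-root line $M^0$; it cannot distinguish slope $1$ from slope $2$. This is precisely why the finer crystalline structure of the ordinary $F$-crystal, together with the canonical lift's coincidence of Hodge and slope filtrations, is indispensable, and why the technical heart of the proof is the integral $p$-adic comparison --- either Fontaine--Laffaille for $p\geq5$ or the Kuga--Satake reduction uniformly in $p$ --- that promotes the crystalline decomposition to one of the integral Betti lattice $M\otimes\bZ_p$.
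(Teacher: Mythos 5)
For $p\geq 5$ your route is genuinely different from the paper's and is plausibly viable: the paper never invokes the crystalline slope decomposition or Fontaine--Laffaille theory at all. Instead it proves the integral splitting of $\rH^2_\et(X_{\bar K},\bZ_p)$ for the canonical lift directly and uniformly in $p$ (Theorem~\ref{bigthm:etale-characterisation}), via the enlarged formal Brauer group: Theorem~\ref{thm:formal-brauer-etale-cohomology} gives an embedding $T_p\Psi(\fX)_{\bar K}\hookrightarrow \rH^2_\et(\fX_{\bar K},\bZ_p(1))$ with \emph{torsion-free} cokernel of rank $1$ (resting on Gabber--de Jong's $\Br=\Br'$ \cite{deJong04}), the connected--\'etale splitting of $\Psi$ for the canonical lift splits off $H^0(1)\oplus H^1$ integrally, and the elementary Lemma~\ref{lemma:hyperbolic-plane} on even unimodular rank-$2$ lattices with a saturated isotropic line splits off the remaining line $H^2(-1)$. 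One elision in your $p\geq 5$ argument: Fontaine--Laffaille at best hands you a $\Gal_K$-stable splitting $M\otimes\bZ_p=H^0\oplus H^1(-1)\oplus H^2(-2)$; to conclude (M4) you must still identify the Nygaard--Yu endomorphism $F$, which is characterized crystallinely by $D_\crys(F)=\phi^a$ (Theorem~\ref{theorem:definition-of-F}), with the operator $\Frob_{H^0}\oplus q\Frob_{H^1}\oplus q^2\Frob_{H^2}$ attached to that splitting. The paper does this via Lemma~\ref{lemma:pure-crys} together with the full faithfulness of $D_\crys$; you need the same step, but it is fixable.

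The genuine gap is your treatment of $p\in\{2,3\}$. The sentence ``the decomposition of $M\otimes\bZ_p$ follows by functoriality of the Kuga--Satake construction'' is precisely the missing proof: the Kuga--Satake comparison used by Nygaard and Yu is controlled only after inverting $p$ --- which is exactly why $F$ is a priori only an endomorphism of $M\otimes\bZ[\tfrac{1}{p}]$ and why the integrality in Theorem~\ref{bigthm:properties} is new content. Even granting an $F$-equivariant saturated embedding of $M\otimes\bZ_p$ into $\End(T_pA)$ for the Kuga--Satake variety $A$ (itself problematic at $p=2$, where the even Clifford lattice has extra integrality defects), a slope splitting of the ambient lattice does \emph{not} descend to a saturated $F$-stable sublattice: as your own last paragraph concedes, only the unit-root idempotent lies in $\bZ_p[F]$, and for instance $F=\begin{pmatrix} q & 1\\ 0 & q^2\end{pmatrix}$ on $\bZ_p^2$ preserves the lattice and has eigenvalues $q,q^2$, yet the saturated eigenlines $\bZ_p(1,0)$ and $\bZ_p(1,q^2-q)$ span a sublattice of index $q$, so no $F$-stable integral splitting exists. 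Moreover the crystalline intermediary you lean on is provably insufficient at $p=2$: by the remark following Theorem~\ref{bigthm:etale-characterisation} (citing \cite[2.1.16.b]{DeligneIllusie81}), the splitting of $\rH^2_\crys(\fX/W)$ is strictly weaker than the splitting of $\rH^2_\et(\fX_{\bar K},\bZ_p)$ when $p=2$, so no argument passing only through the filtered crystal can close this case. The small primes are exactly what the paper's formal-Brauer-group argument was built to handle; your proposal does not actually cover them.
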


% For $p>3$ it was already shown by Yu that $F_X$ preserves $M_X$. Maybe move this to the part about proofs
For a $\bZ$-lattice $M$ equipped with an endomorphism $F$ satisfying (M1)--(M4) we denote by $\NS(M,F)$ the group
\[
	\NS(M,F) := \{ x \in M \mid F^d x = q^d x \text{ for some $d\geq 1$} \}.
\]
Using the fact that all line bundles on an ordinary K3 surface extend uniquely to its canonical lift, one shows  that there is a natural bijection $\Pic X_{\bar \bF_q} \to \NS(M,F)$, and that the ample line bundles
on $X_{\bar \bF_q}$ span a real cone $\cK \subset M\otimes \bR$ satisfying 
\begin{enumerate}
\item[(M5)] $\cK$ is a connected component of
\[
\Big\{\, x \in \NS(M,F)\otimes\bR \,\Bigm|\, \langle x, x\rangle >0,\, \langle x,\delta \rangle \neq 0 \text{ for all $\delta \in \NS(M,F)$ with $\delta^2=-2$}\,\Big\}
\]
satisfying $F\cK  = \cK$.
\end{enumerate}
See \S~\ref{sec:line-bundles} for more details.

%Our main result states that the functor $X \mapsto (M,F,\cK)$ gives an equivalence of categories, under a hypothesis related to semi-stable reduction that we introduce first.

\begin{definition*}[see \cite{LiedtkeMatsumoto18}] Let $\cO_K$ be a complete discrete valuation ring with fraction field $K$. We say that a K3 surface $X$ over $K$ satisfies ($\star$) if
there exists a finite extension $K\subset L$ and algebraic space $\fX/\cO_L$ satisfying
\begin{enumerate}
\item $\fX_L \cong X_L$,
\item $\fX$ is regular,
\item the special fiber is a strict normal crossing divisor in $\fX$,
\item the relative dualizing sheaf of $\fX/\cO_L$ is trivial.
\end{enumerate}
\end{definition*}
This is a strong form of `potential semi-stable reduction'. Over a complete dvr $\cO_K$ of residue characteristic $0$, it is known that all K3 surfaces satisfy ($\star$). It is expected to hold in general, but currently only known under extra assumptions on~$X$. 

Our main result is the following description of the category of ordinary K3 surfaces over $\bF_q$. It is an analogue for K3 surfaces of Deligne's theorem \cite{Deligne69} on ordinary abelian varieties over a finite field.

\begin{bigthm}\label{bigthm:equivalence}
Fix an embedding $\iota\colon W(\bF_q)\to \bC$. Then the resulting functor $X\mapsto (M,F,\cK)$ is a fully faithful functor between
the groupoids of
\begin{enumerate}
\item ordinary K3 surfaces $X$ over $\bF_q$, and
\item triples $(M,F,\cK)$ consisting of
\begin{enumerate}
\item an integral lattice $M$,
\item an endomorphism $F$ of the $\bZ$-module $M$, and
\item a convex subset $\cK \subset M\otimes \bR$, 
\end{enumerate}
satisfying (M1)--(M5).
\end{enumerate}
If every K3 surface over $\Frac W(\bF_q)$ satisfies ($\star$) then the functor is 
essentially surjective.
\end{bigthm}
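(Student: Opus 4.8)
The plan is to establish the equivalence of categories (Theorem B) in two separate parts: full faithfulness of the functor $X \mapsto (M, F, \cK)$, and—conditionally on ($\star$)—essential surjectivity. Full faithfulness should follow from a Torelli-type argument combined with the Tate conjecture for K3 surfaces over finite fields (now known in odd characteristic and relevant cases). The key observation is that morphisms of ordinary K3 surfaces are determined by their action on cohomology: a morphism $f\colon X \to Y$ induces a morphism of triples, and conversely any map of lattices $(M_X, F_X, \cK_X) \to (M_Y, F_Y, \cK_Y)$ compatible with $F$ and respecting the ample cones should be realized by a unique morphism of surfaces. Because we work with groupoids, every morphism is an isomorphism, so I would reduce to showing that an isomorphism of triples lifts uniquely to an isomorphism $X \cong Y$. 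Here the canonical lifts $X_\can, Y_\can$ over $W(\bF_q)$ intervene: an isomorphism of the integral Hodge structures (with the $\cK$-constraint fixing the correct ample classes) gives, via the Torelli theorem for complex K3 surfaces applied to $X_\can^\iota$ and $Y_\can^\iota$, an isomorphism of the complex canonical lifts, and one then descends this through $W(\bF_q)$ and specializes to $\bF_q$, using that $F$ matches geometric Frobenius to guarantee the isomorphism is defined over $\bF_q$ rather than merely over $\bar\bF_q$.

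\emph{Essential surjectivity} is where the real work lies. Given an abstract triple $(M, F, \cK)$ satisfying (M1)--(M5), the goal is to produce an ordinary K3 surface $X/\bF_q$ realizing it. The strategy is to first build a \emph{complex} K3 surface $X_\bC$ whose cohomology realizes the Hodge structure encoded by $F$ on $M \otimes \bC$: condition (M4) determines a weight-two Hodge structure (the eigenspace decomposition for $F$ over $\bZ_p$ globalizes to a Hodge decomposition after the embedding $\iota$), and (M1), (M3), (M5) ensure this Hodge structure is of K3 type with the correct polarization data. By the surjectivity of the period map for K3 surfaces, such $X_\bC$ exists and is unique up to isomorphism. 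The class in $\cK$ furnishes a polarization, and the Kuga--Satake construction attaches an abelian variety carrying the arithmetic. I would then argue that $X_\bC$ is defined over a number field and has good ordinary reduction, whose reduction mod $p$ is the desired $X/\bF_q$, with the canonical lift recovering $X_\bC$ up to the embedding $\iota$.

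\emph{The main obstacle}, and the reason for the hypothesis ($\star$), is precisely the passage from the complex (or characteristic-zero $p$-adic) model back to a surface over $\bF_q$ with \emph{good reduction whose canonical lift is the given one}. The abstract triple gives a $p$-adic Galois representation (or filtered $F$-isocrystal via (M4)) that one expects to come from a K3 surface over $\Frac W(\bF_q)$; producing that surface uses surjectivity of the period map over $\bC$ plus descent, but to conclude it reduces to a \emph{smooth} K3 surface over $\bF_q$—rather than acquiring bad reduction—one needs to control the reduction type. Condition ($\star$), potential semi-stable reduction in the strong form with trivial dualizing sheaf, is exactly what allows one to run a Néron--Ogg--Shafarevich-style criterion: the $p$-adic comparison between the crystalline/de Rham realization (dictated by (M4), which forces an \emph{ordinary}, hence crystalline, Frobenius structure with no nontrivial monodromy) and the hypothetical semistable model forces the reduction to be good. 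I expect the bulk of the technical effort to be in matching the integral $p$-adic structure (M4) with the integral crystalline cohomology of the good-reduction model—ensuring the lattice $M \otimes \bZ_p$, not merely its rational version, is correctly realized—and in verifying that the resulting surface is ordinary with canonical lift inducing $\iota$. The unconditional cases (large Picard rank, small degree) correspond to situations where ($\star$) is already a theorem, so the argument goes through verbatim there.
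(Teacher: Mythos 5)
Your treatment of full faithfulness is essentially the paper's argument (Torelli applied to $X_{1,\can}^\iota$ and $X_{2,\can}^\iota$, then descent to $K=\Frac W(\bF_q)$ because $f^\ast$ commutes with $F$ and the \'etale cohomology is unramified, then extension over $W$ via Matsusaka--Mumford), except that the Tate conjecture plays no role there; your invocation of it is a red herring. Likewise your first step of essential surjectivity --- building $X_\bC$ from the Hodge structure that (M4) induces via $\iota$, using the Torelli theorem, with $\cK$ pinning down the ample cone --- matches the paper.

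The genuine gap is in the passage from $X_\bC$ back to $\bF_q$. It is far too weak to argue that $X_\bC$ ``is defined over a number field and has good ordinary reduction'': you must produce a model over \emph{exactly} $K=\Frac W(\bF_q)$ (so that the residue field is $\bF_q$, not some extension) whose full adelic Galois representation is the \emph{prescribed} one --- unramified at every $\ell\neq p$ with geometric Frobenius acting as the given integral $F$, and at $p$ splitting as $H^0\oplus H^1(-1)\oplus H^2(-2)$ as in Theorem~\ref{bigthm:etale-characterisation}. A Kuga--Satake abelian variety and generic descent give you no control here: you could land on a twist whose Frobenius is conjugate to, but not equal to, $F$ on the lattice $M$, and then the reduction would realize the wrong triple. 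The paper solves this by showing $X_\bC$ has CM by $E=\bQ(F)$ (because $V_X$ is a cyclic $\bQ[F]$-module), writing down the candidate representation $\rho$ explicitly, verifying via Lubin--Tate theory (Lemma~\ref{lemma:pure-crys}) and local--global compatibility that $\rho$ satisfies the reciprocity law, and then invoking the strong main theorem of CM for K3 surfaces through Theorem~\ref{thm:local-descent} (whose proof needs a profinite lifting lemma, Lemma~\ref{lemma:lifting}, to pass from models over number fields to a model over the local field $K$). You also invert the logic of the final step: good reduction is \emph{not} forced by the crystalline structure of (M4); it comes from ($\star$) together with the unramified $\ell$-adic representation via Theorem~\ref{thm:LM}, and only over a finite unramified extension $K'$. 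One then needs Theorem~\ref{bigthm:etale-characterisation} to recognize $\fX'$ as the canonical lift of its reduction, hence that $\Pic\fX'_{\bar K}\to\Pic\fX'_{\bar k}$ is bijective, and Proposition~\ref{prop:CLL} to descend the smooth projective model from $\cO_{K'}$ to $\cO_K$; a second application of Theorem~\ref{bigthm:etale-characterisation} identifies $\fX$ as the canonical lift of $\fX_k$, so that $\fX_k$ maps to $(M,F,\cK)$. Your proposal omits this descent loop entirely, and without it the construction only yields a surface over some unramified extension of $\bF_q$.
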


Fully faithfulness is essentially due to Nygaard \cite{Nygaard83b} and Yu \cite{Yu12}. Our contribution is a description of the image of this functor.

Restricting to families for which ($\star$) is known to hold, we also obtain unconditional equivalences of categories between ordinary K3 surfaces $X/\bF_q$ satisfying one of the following
additional conditions
\begin{enumerate}
\item there is an ample $\cL \in \Pic X_{\bar \bF_q}$ with $\cL^2 < p-4$,
\item $\Pic X_{\bar \bF_q}$ contains a hyperbolic plane and $p\geq 5$,
\item $X$ has geometric Picard rank $\geq 12$ and $p\geq 5$,
\end{enumerate}
and triples $(M,F,\cK)$ satisfying the analogous constraints.  See Theorem \ref{thm:unconditional} for the precise statement.

\subsection*{About the proofs}

A crucial ingredient in the proof of Theorems \ref{bigthm:properties} and \ref{bigthm:equivalence} is the following criterion which distinguishes the canonical lift amongst all lifts using $p$-adic \'etale cohomology.

\begin{bigthm}\label{bigthm:etale-characterisation}
Let $\cO_K$ be a complete discrete valuation ring with perfect residue field $k$ of characteristic $p$ and fraction field $K$ of characteristic $0$. Let $\fX$ be a projective K3 surface over $\cO_K$ and assume that $\fX_k$ is ordinary. Then the following are equivalent:
\begin{enumerate}
\item $\fX$ is the base change from $W(k)$  to $\cO_K$ of  the canonical lift of $\fX_k$
\item $\rH^2_\et(\fX_{\bar K},\bZ_p) \cong H^0 \oplus H^1(-1) \oplus H^2(-2)$ with $H^i$ unramified $\bZ_p[\Gal_K]$-modules, free of rank $1$, $20$, $1$ over $\bZ_p$, respectively.
\end{enumerate}
\end{bigthm}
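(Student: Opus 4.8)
The plan is to run the comparison between $p$-adic \'etale and crystalline cohomology and to recognise both conditions (1) and (2) as saying that the Hodge filtration \emph{is} the slope filtration. Write $K_0 = \Frac W(k)$. Since $\fX_k$ is ordinary, the Newton and Hodge polygons of the $F$-crystal $\rH^2_\crys(\fX_k/W(k))$ coincide, so it carries a canonical decomposition
\[
\rH^2_\crys(\fX_k/W(k)) = U^0\oplus U^1\oplus U^2
\]
into $\phi$-stable free $W(k)$-submodules of ranks $1,20,1$ on which the crystalline Frobenius $\phi$ has slopes $0,1,2$; its integrality is precisely the ordinarity hypothesis (cf. \cite{DeligneIllusie81,Nygaard83}). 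The key intermediate assertion is:
\begin{enumerate}
\item[(C)] under the de Rham comparison $\rH^2_\dR(\fX/\cO_K)\otimes_{\cO_K} K \cong \rH^2_\crys(\fX_k/W(k))\otimes_{W(k)} K$, the Hodge filtration is the slope filtration, i.e. $\Fil^2 = U^2\otimes K$ and $\Fil^1 = (U^1\oplus U^2)\otimes K$.
\end{enumerate}
I will prove (1)$\Rightarrow$(2) directly by reducing to $W(k)$, and (2)$\Rightarrow$(1) by proving (2)$\Rightarrow$(C)$\Rightarrow$(1); routing the integral statement through the unramified base $K_0$ avoids integral $p$-adic Hodge theory over the ramified $\cO_K$.

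For (1)$\Rightarrow$(2): if $\fX = X_\can\otimes_{W(k)}\cO_K$, then $\fX_{\bar K}\cong (X_\can)_{\bar K}$ and the $\Gal_K$-action on $\rH^2_\et(\fX_{\bar K},\bZ_p)$ is the restriction along $\Gal_K\hookrightarrow\Gal_{K_0}$ of the $\Gal_{K_0}$-representation attached to $X_\can$; since restriction preserves unramifiedness, it suffices to treat $X_\can/W(k)$. There the base is unramified, $\rH^2_\dR(X_\can/W(k)) = \rH^2_\crys(\fX_k/W(k))$, and by Nygaard's construction the Hodge filtration is the slope filtration $U^2\subset U^1\oplus U^2$. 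Hence the filtered $\phi$-module $D_{\crys}\bigl(\rH^2_\et\otimes\bQ_p\bigr)$ is the direct sum of the slope-pure pieces $U^i[\tfrac1p]$ with filtration concentrated in degree $i$; as $D_{\crys}$ of an unramified representation twisted by $(-i)$ is exactly such a piece, Fontaine's equivalence yields $H^0\oplus H^1(-1)\oplus H^2(-2)$ rationally. Integrally, the ordinary splitting $\rH^2_\crys=U^0\oplus U^1\oplus U^2$ of $F$-crystals produces, via the unit-root crystal $U^0$ and its dual $U^2$, unramified $\bZ_p$-lattices $H^0$ and $H^2(-2)$; torsion-freeness of $\rH^2_\et(\cdot,\bZ_p)$ for a K3 surface makes the three pieces saturated, so the decomposition holds over $\bZ_p$.

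For (2)$\Rightarrow$(1): working over the possibly ramified $K$, the comparison theorem identifies $D_{\crys}\bigl(\rH^2_\et(\fX_{\bar K},\bQ_p)\bigr)$ with $\rH^2_\crys(\fX_k/W(k))[\tfrac1p]$ carrying its Frobenius and the Hodge filtration of $\rH^2_\dR(\fX/\cO_K)$. If this representation is $H^0\oplus H^1(-1)\oplus H^2(-2)$ with $H^i$ unramified, then each summand is isoclinic of slope $i$ with filtration jump only at $i$; matching slopes and Hodge degrees forces the Hodge filtration to be the slope filtration, which is (C). Finally (C)$\Rightarrow$(1): by Nygaard's theory $X_\can$ is characterised among lifts of $\fX_k$ by having slope-adapted Hodge filtration, so $\fX$ and $X_\can\otimes_{W(k)}\cO_K$ are two lifts of $\fX_k$ over $\cO_K$ sharing the same Hodge line $\Fil^2$ (the generic equality of (C), together with saturation and the common reduction in the closed fibre, pins the line down integrally). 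By the unobstructed deformation theory of K3 surfaces, in which lifts are classified by the lift of the Hodge filtration through the period/Kodaira--Spencer map, the two are isomorphic as formal and hence (both being projective) as algebraic K3 surfaces, giving (1).

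The main obstacle is the passage from $\bQ_p$- to $\bZ_p$-coefficients uniformly in $p$. The comparison and Fontaine's equivalence are cleanest rationally, while the statement asserts an integral splitting with no hypothesis on $p$; since the Hodge--Tate weights of $\rH^2$ lie in $[0,2]$, Fontaine--Laffaille delivers the integral equivalence only for $p\geq 5$, and for small $p$ one must instead feed the integral $F$-crystal decomposition $U^0\oplus U^1\oplus U^2$ through finer integral $p$-adic Hodge theory (strongly divisible lattices, or Breuil--Kisin / prismatic $F$-gauges), using torsion-freeness of both $\rH^2_\crys(\fX_k/W(k))$ and $\rH^2_\et(\fX_{\bar K},\bZ_p)$ to guarantee saturation and integral splitting. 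The remaining work is careful bookkeeping of conventions so that ``$\phi$-slope $i$ with filtration in degree $i$'' corresponds exactly to ``unramified twisted by $(-i)$'', and verifying that the deformation-theoretic step recovers the lift over the ramified $\cO_K$ rather than merely over $W(k)$.
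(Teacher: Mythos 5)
Your proposal takes a genuinely different route from the paper (filtered $\phi$-modules and ``Hodge filtration $=$ slope filtration'' instead of $p$-divisible groups), but it has a gap that is fatal at $p=2$ and is precisely the point the integral \'etale statement is designed to overcome. Your chain (2)$\Rightarrow$(C)$\Rightarrow$(1) hinges on the claim that a lift whose Hodge filtration is the slope filtration (equivalently, whose filtered crystal $\rH^2_\crys(\fX/W)$ splits) must be the canonical lift. This is Nygaard's crystalline characterization and it holds only for $p>2$ (\cite{DeligneIllusie81}, \cite[Lem.~1.11, Thm.~1.12]{Nygaard83}); for $p=2$ the splitting of the filtered crystal is \emph{strictly weaker} than the splitting of $\rH^2_\et(\fX_{\bar K},\bZ_p)$ --- see \cite[2.1.16.b]{DeligneIllusie81}, cited for exactly this reason in the remark following the paper's proof. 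Since your intermediate statement (C) retains only the filtered-crystal shadow of hypothesis (ii), the implication (2)$\Rightarrow$(1) cannot close at $p=2$ along this route; one must use the integral \'etale splitting itself. There is a second problem in the same direction: your deformation-theoretic step classifies lifts over the possibly ramified $\cO_K$ by lifts of the Hodge filtration of the crystal, but evaluating the crystal on $\cO_K$ requires a PD thickening of $\cO_K\to k$, which is unavailable for large ramification relative to $p$; Nygaard's characterization is a statement over $W(k)$, and extending it to ramified bases is not bookkeeping.

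The forward direction also has a genuine integral gap: torsion-freeness of $\rH^2_\et(\fX_{\bar K},\bZ_p)$ does \emph{not} imply that the three saturated pieces span the lattice --- a direct sum of saturated sublattices can have finite index (already two saturated lines in $\bZ_2^2$ can sum to an index-$2$ sublattice) --- and Fontaine--Laffaille covers only $p\geq 5$ for weights in $[0,2]$, so the cases $p=2,3$, which you defer to ``finer integral $p$-adic Hodge theory,'' are exactly the hard content. The paper avoids all of this uniformly in $p$ and over ramified $\cO_K$ by working with the enlarged formal Brauer group: Theorem~\ref{thm:formal-brauer-etale-cohomology} gives an \emph{integral} embedding $T_p\Psi(\fX)_{\bar K}\hookrightarrow \rH^2_\et(\fX_{\bar K},\bZ_p(1))$ with torsion-free rank-$1$ cokernel; for (ii)$\Rightarrow$(i) a Hodge--Tate weight count plus Tate's full-faithfulness theorem \cite[Thm.~4]{Tate67} splits $\Psi(\fX)$, which by Nygaard's Serre--Tate-type classification of lifts by extension classes is exactly condition (i); and for (i)$\Rightarrow$(ii) the torsion-free cokernel feeds a short lattice argument (Lemma~\ref{lemma:hyperbolic-plane}) showing the orthogonal complement of $H^1$ is an even unimodular rank-$2$ lattice with an isotropic line, hence a hyperbolic plane $H^0(1)\oplus H^2(-1)$. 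If you want to salvage your approach, you would have to replace (C) by the full integral \'etale condition and prove the $p=2,3$ integral comparison yourself, at which point you have essentially reconstructed the harder half of the paper's argument by heavier means.
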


Here the $(-1)$ and $(-2)$ in (ii) denote Tate twists. This theorem is an integral refinement of a theorem of Yu \cite{Yu12} which characterizes \emph{quasi-canonical} lifts by the splitting of \'etale cohomology with $\bQ_p$-coefficients.

The canonical lift of $X$ is defined in terms of its enlarged formal Brauer group $\Psi$ (a $p$-divisible group), and to prove Theorem \ref{bigthm:etale-characterisation} we need to compare the $p$-adic \'etale cohomology of the generic fiber of a K3 surface over $\cO_K$ to the Tate module of its enlarged formal Brauer group. With $\bQ_p$-coefficients, such comparison has been shown by Artin and Mazur \cite{ArtinMazur77}. We give a different argument leading to an integral version, see Theorem~\ref{thm:formal-brauer-etale-cohomology}.  Once Theorem \ref{bigthm:etale-characterisation} is established, Theorem \ref{bigthm:properties} is an almost formal consequence.

Finally, we briefly sketch the argument for the proof of Theorem \ref{bigthm:equivalence}. Fully faithfulness was shown by Nygaard \cite{Nygaard83b} and Yu \cite{Yu12} (see \S~\ref{sec:fully-faithful} for more details). The proof of essential surjectivity starts with the observation that the decomposition in (M4) induces (via the embedding $\iota\colon \bZ_p \to \bC$) a Hodge structure on $M$, for which $\NS(M,F)$ consists precisely of the Hodge classes.  The Torelli theorem for K3 surfaces then shows that there is a canonical K3 surface $X/\bC$ with $\rH^2(X,\bZ)=M$ and whose ample cone $\cK \subset \rH^2(X,\bR)$ coincides with  $\cK \subset M\otimes \bR$. This K3 surface has complex multiplication, and hence can be defined over a number field. Using the strong version of the main theorem of CM for K3 surfaces of \cite{Taelman17} we show that we can find a model of $X$ over $K := \Frac W(\bF_q) \subset \bC$  such that
\begin{enumerate}
\item the $\Gal_{K}$-module $\rH^2_\et(X_{\bar K},\bZ_p) = M \otimes \bZ_p$ decomposes as in Theorem \ref{bigthm:etale-characterisation},
\item for $\ell\neq p$, the $\Gal_{K}$-module $\rH^2_\et(X_{\bar K},\bZ_\ell)=M\otimes \bZ_\ell$ is unramified, and Frobenius acts as $F$.
\end{enumerate}
Assuming ($\star$), it follows from N\'eron--Ogg--Shafarevich criterion of Liedtke and Matsumoto \cite{LiedtkeMatsumoto18} that $X$ has good reduction over an unramified extension $L$ of $K$. Using  Theorem \ref{bigthm:etale-characterisation} we show that $X_L$ is the canonical lift of its reduction, and deduce from this that $X$ has already a smooth projective model $\fX$ over $\cO_K$. By construction, its reduction $\fX_k$ maps under our functor to the given triple $(M,F,\cK)$.

\subsection*{A question}

We end this introduction with an essentially lattice-theoretical question to which we do not know the answer:

\begin{question*}Does there exist a triple $(M,F,\cK)$ satisfying (M1)--(M5) and the inequality $1+ \tr F + q^2  <0$?\end{question*} 

By (M3) such triple can only exist for small $q$. A positive answer to this question would imply that there exist K3 surfaces over $p$-adic fields that do not satisfy ($\star$). Indeed, if $(M,F,\cK)$ came from a K3
surface $X/\bF_q$ as in Theorem~\ref{bigthm:equivalence} we would have
\[
	|X(\bF_q)| = \tr(\Frob,\, \rH^\bullet(X_{\bar \bF_q},\,\bQ_\ell)) = 1 + \tr F + q^2 < 0,
\]
which is absurd.

\section{$p$-divisible groups associated to K3 surfaces}

Let $\Lambda$ be a complete noetherian local ring with perfect residue field $k$ of characteristic $p>0$ 
and let $\fX$ be a K3 surface over $\Spec \Lambda$. We recall (and complement) some of the main results of Artin and Mazur \cite{ArtinMazur77} on the formal Brauer group and enlarged formal Brauer group of $\fX$.

\subsection{The formal Brauer group}
 Let $\Art_\Lambda$ be the category of Artinian local $\Lambda$-algebras $(A,\fm)$ with perfect residue field $A/\fm$. For an $(A,\fm) \in \Art_\Lambda$ we denote by $\cU_A$ the sheaf on $\fX_\et$ defined by the exact sequence 
\[
	1 \to \cU_A \to \cO_{\fX_A}^\times \to \cO_{\fX_{A/\fm}}^\times \to 1.
\]
The \emph{formal Brauer group} of $\fX$ is the functor
\[
	\hat\Br(\fX)\colon \Art_\Lambda \to \Ab,\,
	A \mapsto \rH^2(\fX_\et, \cU_A ).
\]
By \cite{ArtinMazur77} it is representable by a one-dimensional formal group, and if $\fX$ is not supersingular then $\hat\Br(\fX)$ is a $p$-divisible group. 

\begin{lemma}\label{lemma:etale-to-zariski}
$\rH^\bullet(\fX_\et,\cU_A) = \rH^\bullet(\fX,\cU_A)$ and $\rH^1(\fX_\et,\cU_A)=0$.
\end{lemma}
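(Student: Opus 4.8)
The plan is to prove both assertions simultaneously by dévissage along a finite $\fm$-adic filtration of $\cU_A$ whose graded pieces are coherent sheaves on the special fibre. This reduces everything to the K3 surface $\fX_{A/\fm}$ over the residue field, where I can invoke the two classical facts I need: the agreement of étale and Zariski cohomology for quasi-coherent sheaves, and the vanishing $\rH^1(\cO)=0$ characteristic of a K3 surface. Write $\kappa := A/\fm$, and note that, $A$ being Artinian, $\fm^{N+1}=0$ for some $N$. For $n\geq 1$ set $\cU^{(n)} := 1 + \fm^n\cO_{\fX_A} \subseteq \cO_{\fX_A}^\times$, a subsheaf of abelian groups on $\fX_\et$. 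Then $\cU^{(1)}=\cU_A$ and the filtration $\cU_A = \cU^{(1)} \supseteq \cU^{(2)} \supseteq \cdots \supseteq \cU^{(N+1)} = 1$ is finite.

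The first step is to identify the graded pieces. For $x,y \in \fm^n\cO_{\fX_A}$ one has $(1+x)(1+y) = 1+(x+y)+xy$ with $xy\in\fm^{2n}\cO_{\fX_A}\subseteq \fm^{n+1}\cO_{\fX_A}$ for $n\geq 1$, so the truncated logarithm $1+x\mapsto x$ induces an isomorphism of sheaves of abelian groups
\[
	\cU^{(n)}/\cU^{(n+1)} \isomto \fm^n\cO_{\fX_A}/\fm^{n+1}\cO_{\fX_A},
\]
the target being additive. Since $\fX$ is flat over $\Lambda$, so is $\fX_A$ over $A$, and the associated graded of the $\fm$-adic filtration gives $\fm^n\cO_{\fX_A}/\fm^{n+1}\cO_{\fX_A} \cong (\fm^n/\fm^{n+1})\otimes_\kappa \cO_{\fX_\kappa}$, a finite direct sum of copies of the structure sheaf of the K3 surface $\fX_\kappa := \fX_{A/\fm}$.

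Two standard inputs, propagated along the finite filtration, then finish the argument. For the comparison, let $\epsilon\colon \fX_\et \to \fX$ be the morphism from the étale to the small Zariski site. For quasi-coherent sheaves one has $\rR^q\epsilon_* = 0$ in positive degrees and $\epsilon_*$ recovers the Zariski sheaf; applied to each graded piece this yields $\rR^q\epsilon_*(\cU^{(n)}/\cU^{(n+1)})=0$ for $q>0$. Running down the filtration via the long exact sequences for $\rR\epsilon_*$ attached to $1\to\cU^{(n+1)}\to\cU^{(n)}\to\cU^{(n)}/\cU^{(n+1)}\to 1$, with base case $\cU^{(N+1)}=1$, gives $\rR^q\epsilon_*\cU_A=0$ for $q>0$, whence the Leray spectral sequence collapses to $\rH^\bullet(\fX_\et,\cU_A)=\rH^\bullet(\fX,\cU_A)$. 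For the vanishing, each graded piece has $\rH^1(\fX_\kappa,(\fm^n/\fm^{n+1})\otimes\cO_{\fX_\kappa})=(\fm^n/\fm^{n+1})\otimes\rH^1(\fX_\kappa,\cO)=0$ because $\rH^1(\cO)=0$ for a K3 surface; combined with $\rH^1(\cU^{(N+1)})=0$ and descending induction through the same short exact sequences, this gives $\rH^1(\fX_\et,\cU_A)=0$.

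The only step requiring genuine care — the main obstacle — is the identification of the graded pieces as coherent sheaves: one must verify that the multiplicative structure on $\cU^{(n)}$ degenerates to the additive structure modulo $\cU^{(n+1)}$ (exactly the estimate $\fm^{2n}\subseteq\fm^{n+1}$, valid for $n\geq 1$) and use flatness of $\fX/\Lambda$ to split off the coefficient space $\fm^n/\fm^{n+1}$. Once the pieces are coherent, the remainder is bookkeeping: the filtration is finite, and both the comparison isomorphism and the $\rH^1$-vanishing are stable under extensions, so no difficulty beyond the two black boxes — the Zariski/étale comparison for quasi-coherent sheaves and the defining vanishing $\rH^1(\fX_\kappa,\cO)=0$ — is expected.
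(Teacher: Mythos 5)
Your proof is correct and takes essentially the same route as the paper: the paper's proof likewise filters $\cU_A$ by the subsheaves $1+\fm^n\cO_{\fX_A}$, identifies the graded pieces with the coherent sheaves $\fm^n/\fm^{n+1}\otimes_{A/\fm}\cO_{\fX_{A/\fm}}$, and concludes by the \'etale/Zariski comparison for coherent sheaves together with the vanishing of $\rH^1(\fX_{A/\fm},\cO_{\fX_{A/\fm}})$. You have merely made explicit the truncated-logarithm identification, the use of flatness, and the d\'evissage through the long exact sequences, all of which the paper leaves implicit.
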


Here $\rH^\bullet(\fX,-)$ denotes Zariski cohomology.

\begin{proof}[Proof of Lemma \ref{lemma:etale-to-zariski}]
The sheaf $\cU_A$ has a filtration whose graded pieces are 
\[
	\fm^n/\fm^{n+1} \otimes_{A/\fm}\cO_{\fX_{A/\fm}}
\]
Since these are coherent, we have
$\rH^\bullet(\fX_\et,\cU_A) = \rH^\bullet(\fX,\cU_A)$. Moreover, since $\rH^1(\fX_{A/\fm},\cO_{\fX_{A/\fm}})$ vanishes,
we conclude that  $\rH^1(\fX_\et,\cU_A)=0$.
\end{proof}

\begin{lemma}\label{lemma:torsion-of-formal-brauer}
For every $(A,\fm)\in \Art_\Lambda$ there is a natural exact sequence
\[
	0 \longto \hat\Br(\fX)[p^r](A) \longto \rH^2_\fl(\fX_A, \mu_{p^r} ) \longto \rH^2_\fl(\fX_{A/\fm},\mu_{p^r})
\]
of abelian groups.
\end{lemma}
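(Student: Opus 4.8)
The plan is to work throughout in the flat (fppf) topology, where the Kummer sequence $1\to\mu_{p^r}\to\bG_m\xrightarrow{p^r}\bG_m\to 1$ is exact, and to package the defining sequence of $\cU_A$ together with multiplication by $p^r$ into a short exact sequence of two-term complexes. Write $j\colon \fX_{A/\fm}\injto\fX_A$ for the nilpotent closed immersion. Since a unit on $\fX_{A/\fm}$ lifts Zariski-locally to a unit on $\fX_A$, the defining sequence of $\cU_A$ reads $1\to\cU_A\to\bG_{m,\fX_A}\to j_*\bG_{m,\fX_{A/\fm}}\to 1$ as fppf sheaves on $\fX_A$. Applying multiplication by $p^r$ to each term yields a short exact sequence of complexes (in degrees $0,1$)
\[
0\to[\cU_A\xrightarrow{p^r}\cU_A]\to[\bG_m\xrightarrow{p^r}\bG_m]\to[j_*\bG_m\xrightarrow{p^r}j_*\bG_m]\to 0 .
\]

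Next I would identify the outer terms. By Kummer, $[\bG_m\xrightarrow{p^r}\bG_m]$ is quasi-isomorphic to $\mu_{p^r}$, so the middle complex computes $\rH^\bullet_\fl(\fX_A,\mu_{p^r})$; since $j$ is finite one has $Rj_*\bG_m=j_*\bG_m$, whence the right-hand complex computes $\rH^\bullet_\fl(\fX_{A/\fm},\mu_{p^r})$, the induced map being restriction along $j$. The long exact hypercohomology sequence then gives
\[
\rH^1_\fl(\fX_{A/\fm},\mu_{p^r})\xrightarrow{\partial}\mathbb{H}^2_\fl(\fX_A,[\cU_A\xrightarrow{p^r}\cU_A])\to\rH^2_\fl(\fX_A,\mu_{p^r})\to\rH^2_\fl(\fX_{A/\fm},\mu_{p^r}).
\]
The two-term complex $[\cU_A\xrightarrow{p^r}\cU_A]$ fits in a short exact sequence $0\to\rH^1(\cU_A)/p^r\to\mathbb{H}^2_\fl(\fX_A,[\cU_A\xrightarrow{p^r}\cU_A])\to\rH^2(\cU_A)[p^r]\to 0$. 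As $\cU_A$ is filtered by coherent sheaves its flat, étale and Zariski cohomologies agree, so Lemma \ref{lemma:etale-to-zariski} gives $\rH^1(\cU_A)=0$ and $\rH^2(\cU_A)=\hatBr(\fX)(A)$. Hence the middle term is identified with $\hatBr(\fX)[p^r](A)$, and the displayed four-term sequence becomes exactly the one in the statement, except that left-exactness still requires $\partial=0$.

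It therefore remains to prove $\partial=0$, which is the crucial point. By exactness, $\ker\partial$ is the image of the restriction map $\rho\colon\rH^1_\fl(\fX_A,\mu_{p^r})\to\rH^1_\fl(\fX_{A/\fm},\mu_{p^r})$, so it suffices to show $\rho$ is surjective. Here I would again invoke Lemma \ref{lemma:etale-to-zariski}, which forces $\Pic\fX_A\injto\Pic\fX_{A/\fm}$; combined with the torsion-freeness of the Picard group of a K3 surface this gives $\Pic(\fX_A)[p^r]=0=\Pic(\fX_{A/\fm})[p^r]$. The Kummer sequence in degree one then collapses to $\rH^1_\fl(\fX_A,\mu_{p^r})=A^\times/(A^\times)^{p^r}$ and likewise for $A/\fm$, reducing surjectivity of $\rho$ to that of $A^\times\to(A/\fm)^\times$, which holds since $A$ is local. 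Naturality in $A$ is automatic, every construction being functorial.

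The main obstacle is twofold. First there is the flat-topology bookkeeping, above all the identification of the two outer complexes with $\mu_{p^r}$-cohomology on $\fX_A$ and $\fX_{A/\fm}$ (via the Kummer quasi-isomorphism and $Rj_*=j_*$ for the finite morphism $j$), together with the passage between fppf and coherent cohomology for $\cU_A$ that lets Lemma \ref{lemma:etale-to-zariski} be applied. Second, and more essential, is the vanishing of the connecting map $\partial$: this is precisely what upgrades a four-term exact sequence to the asserted left-exact one, and it rests on the torsion-freeness of $\Pic$ of a K3 surface, which is the geometric input specific to the problem.
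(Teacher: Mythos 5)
Your proof is correct and follows the paper's strategy almost step for step: the same short exact sequence of two-term complexes with $[\cU_A \overset{p^r}{\to} \cU_A]$ on the left, the same identification of its second hypercohomology with $\hatBr(\fX)[p^r](A)$ via Lemma \ref{lemma:etale-to-zariski}, and the same Kummer/smoothness comparison identifying the outer terms with $\rH^\bullet_\fl(-,\mu_{p^r})$. The one genuine divergence is how you kill the connecting map $\partial$. The paper proves outright that $\rH^1_\fl(\fX_{A/\fm},\mu_{p^r})=0$, using that $A/\fm$ is perfect (so the $p$-power map on $(A/\fm)^\times$ is bijective and the units contribute nothing mod $p^r$) together with torsion-freeness of $\Pic \fX_{A/\fm}$. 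You instead show that the restriction $\rH^1_\fl(\fX_A,\mu_{p^r}) \to \rH^1_\fl(\fX_{A/\fm},\mu_{p^r})$ is surjective, reducing via torsion-freeness of both Picard groups (injectivity of $\Pic \fX_A \to \Pic \fX_{A/\fm}$ does follow from $\rH^1(\fX_\et,\cU_A)=0$, as you say) to surjectivity of $A^\times \to (A/\fm)^\times$, which holds since $A$ is local. Both arguments are valid; yours has the small merit of not invoking perfectness of the residue field at this step (though the definition of $\Art_\Lambda$ imposes it anyway), while the paper's vanishing statement is sharper and is reused implicitly elsewhere. One bookkeeping caveat: your claim that $Rj_*\bG_m=j_*\bG_m$ in the fppf topology does not follow merely from ``$j$ is finite'' --- vanishing of higher direct images along finite morphisms is an \'etale-topology fact; it is true here because $\bG_m$ is smooth (so fppf and \'etale cohomology agree) and the higher \'etale direct images along $j$ vanish, or more simply because the nilpotent immersion $j$ induces an equivalence of \'etale sites --- which is exactly how the paper sidesteps the pushforward, by regarding $\cO_{\fX_A}^\times$ and $\cO_{\fX_{A/\fm}}^\times$ as sheaves on the single site $\fX_\et$.
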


Here $\rH^\bullet_\fl$ denotes cohomology in the fppf topology.

\begin{proof}[Proof of Lemma \ref{lemma:torsion-of-formal-brauer}]
Consider the complex $\cU_A \overset{p^r}{\to} \cU_A$ on $\fX_\et$ in degrees $0$ and $1$. We have a short exact sequence
\[
	1 \longto \rH^1(\fX,\cU_A) \otimes \bZ/p^r\bZ \longto \rH^2(\fX,\cU_A \overset{p^r}{\to} \cU_A ) \longto
	\rH^2(\fX,\cU_A)[p^r] \longto 1
\]
and thanks to Lemma \ref{lemma:etale-to-zariski} we obtain canonical isomorphisms
\[
	  \hat\Br(\fX)[p^r](A)  = \rH^2(\fX,\,\cU_A)[p^r] = \rH^2(\fX,\cU_A \overset{p^r}{\to} \cU_A ).
\]
Now consider the short exact sequence
\[
\begin{tikzcd}
1 \arrow{r} & \cU_A \arrow{d}{p^r} \arrow{r}
	& \cO_{\fX_A}^\times \arrow{d}{p^r} \arrow{r}
	&  \cO_{\fX_{A/\fm}}^\times \arrow{d}{p^r} \arrow{r} & 1 \\
1 \arrow{r} & \cU_A  \arrow{r}
	& \cO_{\fX_A}^\times \arrow{r}
	&  \cO_{\fX_{A/\fm}}^\times \arrow{r} & 1
\end{tikzcd}
\]
of length-two complexes concentrated in degrees $0$ and $1$.
Using the Kummer sequence and the fact that $\bG_m$ is smooth, we see that
\[
	\rH^n_\et(\fX, \cO_{\fX_A}^\times \overset{p^r}\to  \cO_{\fX_A}^\times )
	= \rH^n_\fl(\fX_{A}, \bG_m \overset{p^r}\to \bG_m ) = \rH^n_\fl(\fX_A,\mu_{p^r})
\]
and similarly for $\fX_{A/\fm}$. It follows that the above short exact sequence of complexes induces a long exact sequence of (hyper-)cohomology groups
\[
	\longto \rH^1_\fl(\fX_{A/\fm},\mu_{p^r}) \longto \rH^2(\fX,\cU_A \overset{p^r}{\to} \cU_A )
	\longto \rH^2_\fl(\fX_A, \mu_{p^r} ) \longto \rH^2_\fl(\fX_{A/\fm}, \mu_{p^r}) \longto
\]
Since $A/\fm$ is perfect and $\Pic \fX_{A/\fm}$ is torsion-free, we have $\rH^1_\fl(\fX_{A/\fm},\mu_{p^r})=0$ and we 
conclude
\[
	 \hat\Br(\fX)[p^r](A) =  \rH^2(\fX,\cU_A \overset{p^r}{\to} \cU_A ) = 
	 \ker \big[ \rH^2_\fl(\fX_A, \mu_{p^r} ) \to \rH^2_\fl(\fX_{A/\fm}, \mu_{p^r})  \big],
\]
which is what we had to show.
\end{proof}

\subsection{The enlarged formal Brauer group}
We now assume that $\fX_k$ is ordinary.
Denote by $\mu_{p^\infty}$ the sheaf $\colim_r \mu_{p^r}$ on the fppf site. The \emph{enlarged formal Brauer group} of $\fX$ is the functor
\[
	\Psi(\fX)\colon \Art_\Lambda \to \Ab,\, A \mapsto \rH^2_\fl(\fX_A,\mu_{p^\infty}).
\]
A priori this differs from the definition of Artin--Mazur \cite[\S~IV.1]{ArtinMazur77} in two ways. First, Artin and Mazur restrict to $A$ with algebraically closed residue fields, and then use Galois descent to extend their definition to non-closed  perfect residue fields, and second, they restrict to those classes in $\rH^2_\fl(\fX_A,\mu_{p^\infty})$ that map to the $p$-divisible part of $\rH^2_\fl(\fX_{\bar k},\mu_{p^\infty})$. Lemmas \ref{lemma:ordinary-p-divisible} and \ref{lemma:galois-invariants} below show that  the above definition is equivalent to that of Artin--Mazur (under our standing condition that $\fX_k$ is an ordinary K3 surface). See also \cite[Cor.~1.5]{Nygaard83}.

The following lemma is well-known, and implicitly used in \cite{ArtinMazur77} and \cite{Nygaard83}. We include it for the sake of completeness.

\begin{lemma}
For any quasi-compact quasi-separated scheme $X$ there is a natural isomorphism
\[
	\rH^\bullet_\fl(X,\mu_{p^\infty}) \longisomto \colim_r \rH^\bullet_\fl(X,\mu_{p^r}).
\]
\end{lemma}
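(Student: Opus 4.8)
The plan is to recognise the statement as a special case of the general principle that on a quasi-compact quasi-separated scheme the fppf cohomology functors $\rH^n_\fl(X,-)$ commute with filtered colimits of abelian sheaves. By definition $\mu_{p^\infty}=\colim_r \mu_{p^r}$ as a sheaf on the fppf site of $X$, the transition maps being the inclusions $\mu_{p^r}\injto \mu_{p^{r+1}}$, and the indexing system is filtered. The map in the statement is the inverse of the natural comparison map
\[
	\colim_r \rH^\bullet_\fl(X,\mu_{p^r}) \longto \rH^\bullet_\fl(X,\colim_r \mu_{p^r}),
\]
so it suffices to prove that for every filtered system $(\cF_i)$ of abelian sheaves on $X_\fl$ the comparison map $\colim_i \rH^\bullet_\fl(X,\cF_i)\to \rH^\bullet_\fl(X,\colim_i \cF_i)$ is an isomorphism.

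First I would fix the full subcategory $\mathcal B$ of the site consisting of the quasi-compact quasi-separated $X$-schemes. Since $X$ is quasi-compact and quasi-separated, $\mathcal B$ has three properties that are the crux of the argument: every object of the site is covered (already Zariski-locally, hence fppf-locally) by members of $\mathcal B$; $\mathcal B$ is stable under fibre products; and every member of $\mathcal B$ is quasi-compact, so that each of its fppf coverings admits a finite refinement by members of $\mathcal B$. In cohomological degree $0$ the comparison map is then an isomorphism on every $U\in\mathcal B$: a section of $\colim_i\cF_i$ over $U$ is, on some covering, represented by sections of individual $\cF_i$; refining to a finite covering and using filteredness to pass to a common index, the separatedness and gluing conditions become finitely many identities over fibre products lying again in $\mathcal B$, each of which can be arranged by further enlarging the index. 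Thus $\Gamma(U,-)$ commutes with filtered colimits for $U\in\mathcal B$.

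To pass to higher degrees I would invoke Verdier's theorem computing cohomology as a filtered colimit $\rH^n_\fl(U,\cF)=\colim_{\mathcal V}\check{\rH}^n(\mathcal V,\cF)$ over the hypercoverings $\mathcal V$ of $U$. Because the members of $\mathcal B$ are quasi-compact and $\mathcal B$ is stable under fibre products, the hypercoverings whose terms are finite disjoint unions of objects of $\mathcal B$ are cofinal, and for such $\mathcal V$ every term of the \v{C}ech complex $\check{C}^\bullet(\mathcal V,-)$ is a section functor over an object of $\mathcal B$. By the degree-$0$ case these terms commute with $\colim_i$, and since filtered colimits are exact they commute with taking cohomology of the complex; as the colimit over hypercoverings is itself filtered, interchanging the two filtered colimits gives the result. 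The point to watch---and the reason the quasi-compactness and quasi-separatedness hypotheses cannot be dropped---is that injective resolutions do not commute with filtered colimits, so the degree-$0$ statement cannot simply be derived; it is the hypercovering description, together with the fact that finite limits commute with filtered colimits, that repairs this. Finally, one should carry out the whole argument on a small model of the fppf site, which is harmless here since $X$ is quasi-compact and the sheaves $\mu_{p^r}$ are representable.
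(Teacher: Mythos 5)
Your proof is correct, and in substance it amounts to proving, from scratch, exactly the general principle that the paper invokes: the paper's entire proof is a one-line citation of \cite[0739]{stacks}, applied with $\mathcal B$ the class of quasi-compact quasi-separated schemes and $\Cov$ the covers consisting of finitely many affines --- which is precisely the pair of auxiliary data ($\mathcal B$ stable under fibre products, every covering of a member of $\mathcal B$ refinable by a finite covering in $\mathcal B$) that drives your degree-$0$ argument. Where you genuinely diverge is in the passage to higher degrees: the proof behind the cited Stacks lemma runs through the \v{C}ech-to-cohomology spectral sequence and an induction on the cohomological degree, whereas you invoke Verdier's hypercovering theorem, writing $\rH^n_\fl(U,\cF)=\colim_{\mathcal V}\check{\rH}^n(\mathcal V,\cF)$ and interchanging two filtered colimits. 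Both are valid; the citation is of course shorter, while your route is self-contained and makes the role of the hypotheses transparent (finite \v{C}ech terms over objects of $\mathcal B$, exactness of filtered colimits, and the fact that injective resolutions do \emph{not} commute with colimits, which you rightly flag). Two points in your sketch deserve the care you only gesture at: cofinality of hypercoverings with finite affine terms requires the inductive coskeleton argument (the $n$-th term must cover a matching object, which is a finite limit of earlier terms and hence again qcqs, so admits a finite affine refinement), and Verdier's theorem requires a site with fibre products and a small model --- both satisfied here, as you note. Neither is a gap, just detail to be filled in at the level of the cited references.
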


\begin{proof}
This follows from \cite[0739]{stacks}, taking for $\mathcal B$ the class of quasi-compact and quasi-separated schemes, and for $\Cov$ the fpqc covers consisting of finitely many affine schemes. 
\end{proof}

\begin{lemma}[{\cite[Cor.~1.4]{Nygaard83}}] \label{lemma:ordinary-p-divisible}
The group $\rH^2_\fl(\fX_{\bar k}, \mu_{p^\infty})$ is $p$-divisible. \qed
\end{lemma}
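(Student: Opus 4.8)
Write $X:=\fX_{\bar k}$ for the geometric fibre. The plan is to reduce the $p$-divisibility of $\rH^2_\fl(X,\mu_{p^\infty})$ to one for the Brauer group, and then to obtain the latter from the $p$-adic cohomology of $X$. First I would apply the Kummer sequence $1\to\mu_{p^r}\to\bG_m\overset{p^r}\to\bG_m\to 1$ on $X_\fl$. Because $\bG_m$ is smooth, its fppf and \'etale cohomology agree, so $\rH^1_\fl(X,\bG_m)=\Pic(X)$ and $\rH^2_\fl(X,\bG_m)=\Br(X)$, and the long exact sequence splits into
\[
	0\longto \Pic(X)/p^r\longto \rH^2_\fl(X,\mu_{p^r})\longto \Br(X)[p^r]\longto 0 .
\]
By functoriality of the Kummer sequence in $r$, the inclusions $\mu_{p^r}\injto\mu_{p^{r+1}}$ induce on the left the multiplication-by-$p$ maps $\Pic(X)/p^r\to\Pic(X)/p^{r+1}$ and on the right the natural inclusions $\Br(X)[p^r]\injto\Br(X)[p^{r+1}]$. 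Passing to the exact filtered colimit over $r$ gives
\[
	0\longto \Pic(X)\otimes\bQ_p/\bZ_p\longto \rH^2_\fl(X,\mu_{p^\infty})\longto \Br(X)[p^\infty]\longto 0 .
\]
As $\Pic(X)=\NS(X)$ is a finitely generated free abelian group, the left-hand term is $p$-divisible; since an extension of $p$-divisible groups is $p$-divisible, it suffices to prove that $\Br(X)[p^\infty]$ is $p$-divisible.

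This is the substantive point, and for it I would pass to crystalline and de Rham--Witt cohomology. The $p$-primary Brauer group of a smooth proper surface is, after removing a divisible group built from the unit-root (slope-$0$) part of $\rH^2$, an extension by a finite group controlled by the torsion of $\rH^\bullet_\crys(X/W)$ in the neighbouring degrees. For a K3 surface this crystalline cohomology is torsion-free in all degrees, so that finite part disappears; and since $X$ is ordinary---hence not supersingular---the formal Brauer group $\hat\Br(X)$ is $p$-divisible, which is exactly what pins down the unit-root contribution as a divisible group. Together these force $\Br(X)[p^\infty]$, and therefore $\rH^2_\fl(X,\mu_{p^\infty})$, to be $p$-divisible.

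The main obstacle is concentrated in this last step: setting up the precise comparison that expresses $\Br(X)[p^\infty]$ through the de Rham--Witt complex and checking that torsion-freeness together with ordinarity really eliminates all non-divisible contributions. An honest alternative that stays within flat cohomology is to prove directly that multiplication by $p$ on $G:=\rH^2_\fl(X,\mu_{p^\infty})$ is surjective: the sequence $0\to\mu_p\to\mu_{p^\infty}\overset{p}\to\mu_{p^\infty}\to0$ on $X_\fl$ shows that $\coker(p\colon G\to G)$ embeds into $\rH^3_\fl(X,\mu_p)$, so it would suffice to control this last group. Either route funnels into the same $p$-adic cohomological input special to (ordinary) K3 surfaces, which is the crux of the whole argument.
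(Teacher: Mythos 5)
The paper offers no proof of this lemma: it is quoted from Nygaard with a \qed, so the benchmark is Nygaard's own argument (Cor.~1.4 of \cite{Nygaard83}). Your first step is correct and rigorous: the Kummer sequence, the identification of the transition maps (multiplication by $p$ on $\Pic(X)/p^r$, inclusion on $\Br(X)[p^r]$), the passage to the filtered colimit giving $0\to \Pic(X)\otimes\bQ_p/\bZ_p\to \rH^2_\fl(X,\mu_{p^\infty})\to \Br(X)[p^\infty]\to 0$, and the reduction to divisibility of $\Br(X)[p^\infty]$ are all fine. But this reduction is cheap; the divisibility of $\Br(X)[p^\infty]$ \emph{is} the lemma, and at that point your main route asserts rather than proves. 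Worse, the mechanism you name cannot work as stated: for a K3 surface $\rH^\bullet_\crys(X/W)$ is torsion-free with $\rH^1_\crys=\rH^3_\crys=0$ for \emph{every} height, and $\hat\Br(X)$ is $p$-divisible for every finite-height (not just ordinary) K3, so the inputs you list ---crystalline torsion-freeness in neighbouring degrees plus $p$-divisibility of $\hat\Br$--- do not use ordinarity in the decisive way and carry no information that singles out the ordinary case. The obstruction to divisibility does not live in crystalline cohomology but in logarithmic Hodge--Witt cohomology.

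Your flagged ``alternative route'' is in fact the one that closes the gap, and it is essentially Nygaard's proof. From $0\to\mu_p\to\mu_{p^\infty}\overset{p}\to\mu_{p^\infty}\to 0$ one gets $\coker\big(p\colon G\to G\big)\injto \rH^3_\fl(X,\mu_p)$, and by the theorem of Milne and Illusie comparing flat and \'etale sites one has $\rH^3_\fl(X,\mu_p)\cong \rH^2_\et(X,\Omega^1_{X,\log})$. This last group vanishes for an ordinary K3 over an algebraically closed field: ordinarity of a K3 is equivalent to $\rH^\bullet(X,B\Omega^1_X)=0$ (Frobenius bijective on $\rH^2(X,\cO_X)$), so the Cartier operator gives $\rH^2(X,Z\Omega^1_X)\cong\rH^2(X,\Omega^1_X)$, which is zero by Serre duality since $\omega_X\cong\cO_X$ and $\rH^0(X,\Omega^1_X)=0$; then the exact sequence of \'etale sheaves $0\to\Omega^1_{X,\log}\to Z\Omega^1_X\overset{C-1}\longto\Omega^1_X\to 0$, together with surjectivity of $C-1$ on $\rH^1$ (a Lang/Artin--Schreier-type argument for the bijective $p^{-1}$-linear operator $C$ over $\bar k$), yields $\rH^2_\et(X,\Omega^1_{X,\log})=0$. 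So: your reduction is sound and your instinct about where the crux lies is right, but the proof as written has a genuine gap at exactly that crux, and the de Rham--Witt gesture you offer in its place is not a correct formulation of what needs to be proved.
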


\begin{lemma}\label{lemma:galois-invariants}
$\rH^2_\fl(\fX_{\bar k}, \mu_{p^\infty})^{\Gal(\bar k/k)} = \rH^2_\fl(\fX_{k}, \mu_{p^\infty})$.
\end{lemma}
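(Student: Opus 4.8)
The plan is to prove the statement by Galois descent along the cover $\fX_{\bar k}\to\fX_k$, reducing to finite Galois extensions and then passing to a colimit. First I would fix a finite Galois extension $k'/k$ with group $G=\Gal(k'/k)$. Since $k$ is perfect the extension $k'/k$ is separable and $\fX_{k'}\to\fX_k$ is a finite \'etale $G$-torsor, so there is a Cartan--Leray (Hochschild--Serre) spectral sequence in the fppf topology
\[
	E_2^{i,j} = \rH^i\big(G,\, \rH^j_\fl(\fX_{k'},\mu_{p^\infty})\big) \Longrightarrow \rH^{i+j}_\fl(\fX_k,\mu_{p^\infty}).
\]
The goal is to show the edge map $\rH^2_\fl(\fX_k,\mu_{p^\infty}) \to \rH^2_\fl(\fX_{k'},\mu_{p^\infty})^G$ is an isomorphism, which will follow from the vanishing of the $E_2$-terms with $j\leq 1$.

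The heart of the argument is two low-degree computations. For $\rH^0$, the scheme $\fX_{k'}$ is proper, geometrically connected and reduced, so $\rH^0_\fl(\fX_{k'},\mu_{p^r}) = \mu_{p^r}(k')$, which vanishes because $k'$ has characteristic $p$; passing to the colimit gives $\rH^0_\fl(\fX_{k'},\mu_{p^\infty})=0$. For $\rH^1$, the Kummer sequence $1\to\mu_{p^r}\to\bG_m\xrightarrow{p^r}\bG_m\to 1$ yields
\[
	0 \to (k')^\times/(k')^{\times p^r} \to \rH^1_\fl(\fX_{k'},\mu_{p^r}) \to \Pic(\fX_{k'})[p^r] \to 0.
\]
The left term vanishes because $k'$ is perfect, so $(k')^\times$ is $p$-divisible, and the right term vanishes because the Picard group of a K3 surface is torsion-free; hence $\rH^1_\fl(\fX_{k'},\mu_{p^\infty})=0$. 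Consequently $E_2^{i,j}=0$ for all $i$ whenever $j\in\{0,1\}$, and the differentials out of $E_2^{0,2}$ (landing in $E_2^{2,1}=0$ and in a subquotient of $E_2^{3,0}=0$) all vanish, so the edge map gives the desired isomorphism $\rH^2_\fl(\fX_k,\mu_{p^\infty}) \cong \rH^2_\fl(\fX_{k'},\mu_{p^\infty})^G$ via restriction.

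Finally I would pass to the limit over all finite Galois $k'/k$. Since $\rH^\bullet_\fl$ commutes with the filtered colimit $\bar k=\colim_{k'}k'$ ($\fX_k$ being of finite type over $k$), one has $\rH^2_\fl(\fX_{\bar k},\mu_{p^\infty})=\colim_{k'}\rH^2_\fl(\fX_{k'},\mu_{p^\infty})$. The isomorphism just established (applied to every step $k_0\subseteq k_1$ of the system, each $k_0$ being perfect) shows that every transition map in this system is injective, identifying its source with a subgroup of invariants; thus the colimit is an increasing union. Because the transition maps are injective and the extensions Galois, an element is fixed by $\Gal(\bar k/k)$ if and only if its representative at some finite level $k'$ is fixed by $\Gal(k'/k)$, so forming Galois invariants commutes with the colimit and
\[
	\rH^2_\fl(\fX_{\bar k},\mu_{p^\infty})^{\Gal(\bar k/k)} = \colim_{k'}\rH^2_\fl(\fX_{k'},\mu_{p^\infty})^{\Gal(k'/k)} = \rH^2_\fl(\fX_k,\mu_{p^\infty}).
\]
The main obstacle is not any individual computation --- these are clean --- but the bookkeeping needed to justify the fppf Cartan--Leray spectral sequence for the Galois cover and, above all, to interchange the formation of Galois invariants with the colimit over $k'$; the injectivity of the transition maps furnished by the finite-level isomorphism is precisely what legitimizes this interchange.
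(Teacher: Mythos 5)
Your proof is correct and takes essentially the same approach as the paper: the paper likewise deduces the lemma from the vanishing of $\rH^0_\fl$ and $\rH^1_\fl$ with $\mu_{p^\infty}$-coefficients (via the Kummer sequence, $p$-divisibility of the units of a perfect field, and torsion-freeness of the Picard group of a K3 surface) together with the Hochschild--Serre spectral sequence, only applied directly to the profinite group $\Gal(\bar k/k)$ acting on $\rH^t_\fl(\fX_{\bar k},\mu_{p^\infty})$. Your reduction to finite Galois levels followed by the colimit-interchange argument is just an unwound, more explicit form of that continuous spectral sequence, so the two proofs coincide in substance.
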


\begin{proof}Since the $p$-th power map $\bar k^\times \to \bar k^\times$ is a bijection, and since $\Pic \fX_{\bar k}$ is torsion-free, we have
\[
	\rH^i_\fl(\fX_{\bar k},\mu_{p^\infty})= \colim_r \rH^i_\fl(\fX_{\bar k},\mu_{p^r}) = 0
\]
for $i\in \{0,1\}$. It now follows from the Hochschild-Serre spectral sequence
\[
	E_2^{s,t} = \rH^s\big(\Gal(\bar k/k), \,\rH^t_\fl(\fX_{\bar k},\mu_{p^\infty})\big) \Rightarrow \rH^{s+t}_\fl(\fX_k,\mu_{p^\infty})
\]
that $\rH^2_\fl(\fX_{\bar k}, \mu_{p^\infty})^{\Gal(\bar k/k)} = \rH^2_\fl(\fX_{k}, \mu_{p^\infty})$.
\end{proof}

The following theorem summarizes the properties of the enlarged formal Brauer group that we will use.

\begin{theorem}
Let $\fX$ be a formal K3 surface over $\Lambda$ with $\fX_k$ ordinary. Then the enlarged formal Brauer group $\Psi(\fX)$ is representable by a $p$-divisible group over $\Lambda$. Its \'etale-local exact sequence 
\[
	0 \to \Psi^\circ(\fX) \to \Psi(\fX) \to \Psi^\et(\fX) \to 0
\]
satisfies 
\begin{enumerate}
\item $\Psi^\circ(\fX)$ is a connected $p$-divisible group of height $1$, with 
\[
	\Psi^\circ(\fX)[p^r](A) = 
	\ker \big(\rH^2_\fl(\fX_A,\mu_{p^r}) \to \rH^2_\fl(\fX_{A/\fm},\mu_{p^r})\big) 
\]
for all $(A,\fm)\in \Art_\Lambda$. It is canonically isomorphic to $\hat\Br(\fX)$;
\item $\Psi(\fX)[p^r](A) = \rH^2_\fl(\fX_A,\, \mu_{p^r})$ for all $(A,\fm)\in \Art_\Lambda$;
\item $\Psi^\et(\fX)[p^r](A) = \rH^2_\fl(\fX_{A/\fm},\,\mu_{p^r})$ for all $(A,\fm)\in \Art_\Lambda$.
\end{enumerate}
\end{theorem}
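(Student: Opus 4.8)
The plan is to import representability from \cite{ArtinMazur77} and then to identify the three pieces of the connected--étale filtration by hand, using the torsion computations of Lemmas~\ref{lemma:etale-to-zariski} and~\ref{lemma:torsion-of-formal-brauer}. First I would use Lemmas~\ref{lemma:ordinary-p-divisible} and~\ref{lemma:galois-invariants} to reconcile our definition of $\Psi(\fX)$ with that of Artin--Mazur: by Lemma~\ref{lemma:ordinary-p-divisible} the group $\rH^2_\fl(\fX_{\bar k},\mu_{p^\infty})$ is already $p$-divisible, so no passage to a maximal divisible subgroup is needed, and Lemma~\ref{lemma:galois-invariants} shows that our value over a perfect residue field agrees with the one obtained by Galois descent from $\bar k$. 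Granting this identification, \cite{ArtinMazur77} gives that $\Psi(\fX)$ is representable by a $p$-divisible group over $\Lambda$, so that over the complete local base it carries a functorial connected--étale exact sequence $0 \to \Psi^\circ(\fX) \to \Psi(\fX) \to \Psi^\et(\fX) \to 0$ with $\Psi^\circ$ connected and $\Psi^\et$ étale; everything that remains is the identification of the two outer terms.

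For (ii), I would compute the $p^r$-torsion directly from the functor of points, independently of representability. Applying $\rH^\bullet_\fl(\fX_A,-)$ to the exact sequence $0 \to \mu_{p^r} \to \mu_{p^\infty} \overset{p^r}{\to} \mu_{p^\infty} \to 0$ and invoking the lemma identifying $\rH^\bullet_\fl(-,\mu_{p^\infty})$ with $\colim_r \rH^\bullet_\fl(-,\mu_{p^r})$, one obtains an exact sequence
\[
	\rH^1_\fl(\fX_A,\mu_{p^\infty}) \longto \rH^2_\fl(\fX_A,\mu_{p^r}) \longto \Psi(\fX)[p^r](A) \longto 0 .
\]
It therefore suffices to show $\rH^1_\fl(\fX_A,\mu_{p^\infty}) = 0$. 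The Kummer sequence for $\mu_{p^r}$, together with $\cO(\fX_A)^\times = A^\times$ and the torsion-freeness of $\Pic \fX_A$ --- the latter following from the injectivity of $\Pic\fX_A \to \Pic\fX_{A/\fm}$ implied by $\rH^1(\fX_\et,\cU_A)=0$ (Lemma~\ref{lemma:etale-to-zariski}) and the torsion-freeness of $\Pic \fX_{A/\fm}$ --- gives $\rH^1_\fl(\fX_A,\mu_{p^r}) = A^\times/(A^\times)^{p^r}$. Writing $A^\times = k^\times \times (1+\fm)$ with $k^\times$ $p$-divisible (as $k$ is perfect) and $1+\fm$ a finite $p$-group, and checking that the transition maps in the colimit are induced by $p$-th powers, one sees $\colim_r A^\times/(A^\times)^{p^r} = 0$. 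Hence $\rH^1_\fl(\fX_A,\mu_{p^\infty})=0$ and (ii) follows.

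For (i), I would use that over the complete local base the connected part is detected by reduction to the residue field: the étale quotient is insensitive to the nilpotents of $A$, and a connected finite group scheme has no nontrivial points over the field $A/\fm$, so $\Psi^\circ(\fX)[p^r](A) = \ker\big(\Psi(\fX)[p^r](A) \to \Psi(\fX)[p^r](A/\fm)\big)$. Under the identification of (ii) this kernel is exactly $\ker\big(\rH^2_\fl(\fX_A,\mu_{p^r}) \to \rH^2_\fl(\fX_{A/\fm},\mu_{p^r})\big)$, which Lemma~\ref{lemma:torsion-of-formal-brauer} identifies with $\hat\Br(\fX)[p^r](A)$. These isomorphisms are natural in $r$ and $A$, so they assemble into a canonical isomorphism $\Psi^\circ(\fX) \cong \hat\Br(\fX)$; since $\fX_k$ is ordinary, $\hat\Br(\fX)$ is connected $p$-divisible of height $1$, giving (i).

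For (iii), I would exploit that over a perfect field the connected--étale sequence of a $p$-divisible group splits canonically. Because $\Psi^\et(\fX)$ is étale, $\Psi^\et(\fX)[p^r](A) = \Psi^\et(\fX)[p^r](A/\fm)$, so it suffices to treat $A=\kappa$ a perfect field; there the splitting gives $\Psi(\fX)[p^r](\kappa) = \Psi^\circ(\fX)[p^r](\kappa) \oplus \Psi^\et(\fX)[p^r](\kappa)$, and the connected summand vanishes, so by (ii) we get $\Psi^\et(\fX)[p^r](\kappa) = \rH^2_\fl(\fX_\kappa,\mu_{p^r})$, which is (iii). The main obstacle is the very first step: matching our definition of $\Psi(\fX)$ over all of $\Art_\Lambda$ (not just over residue fields, where Lemmas~\ref{lemma:ordinary-p-divisible} and~\ref{lemma:galois-invariants} directly apply) with the Artin--Mazur construction, so as to legitimately import representability by a $p$-divisible group over $\Lambda$. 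Once that is secured, parts (i)--(iii) are the essentially formal consequences sketched above.
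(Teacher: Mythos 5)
Your proposal is correct, and its skeleton matches the paper's: reconcile the definition of $\Psi(\fX)$ with Artin--Mazur's via Lemmas~\ref{lemma:ordinary-p-divisible} and~\ref{lemma:galois-invariants}, import representability from \cite[Prop.~IV.1.8]{ArtinMazur77}, and compute $p^r$-torsion from the long exact sequence of $1 \to \mu_{p^r} \to \mu_{p^\infty} \overset{p^r}\to \mu_{p^\infty} \to 1$ together with the colimit lemma. The differences are in the details, and they are worth recording. For (ii), the paper does not prove $\rH^1_\fl(\fX_A,\mu_{p^\infty})=0$: it identifies $\rH^1_\fl(\fX_A,\mu_{p^r})$ with $\rH^0(\fX_A,\bG_m)\otimes \bZ/p^r\bZ$ (the same Kummer-sequence input you use, with torsion-freeness of $\Pic\fX_A$ justified exactly as you do) and then only observes that the colimit $\rH^0(\fX_A,\bG_m)\otimes(\bQ_p/\bZ_p)$ is $p$-divisible, which already forces the connecting map into $\rH^2_\fl(\fX_A,\mu_{p^r})$ to vanish. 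Your stronger vanishing claim is true but more than is needed, and your justification has one small slip: $1+\fm$ is a $p$-group of \emph{bounded exponent} (since $\fm$ and $p$ are nilpotent in $A$) but need not be finite when the residue field is infinite, e.g.\ $\bar\bF_p$; bounded exponent is what your colimit argument actually uses, so the conclusion stands. For (i), the paper simply cites \cite[Prop.~IV.1.8]{ArtinMazur77} for the entire statement, including the identification $\Psi^\circ(\fX)\cong\hat\Br(\fX)$, whereas you re-derive it internally from (ii), the kernel description $\Psi^\circ(\fX)[p^r](A)=\ker\bigl(\Psi(\fX)[p^r](A)\to\Psi(\fX)[p^r](A/\fm)\bigr)$ (valid: the \'etale part is insensitive to the nilpotent thickening $A\to A/\fm$, and a connected finite group scheme has only the trivial point over the field $A/\fm$), and Lemma~\ref{lemma:torsion-of-formal-brauer} -- a correct and more self-contained route that in effect explains why that lemma is in the paper at all. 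For (iii), the paper first treats $A$ with algebraically closed residue field and then descends using Lemma~\ref{lemma:galois-invariants}, while you instead use \'etale invariance under nilpotents plus the canonical splitting of the connected--\'etale sequence over the perfect field $A/\fm$ (the splitting, or equivalently the vanishing of the relevant $\rH^1$, is genuinely needed for surjectivity onto $\Psi^\et(\fX)[p^r]$, and you invoke it correctly); your version avoids the Galois-descent step altogether. Both treatments are sound; yours buys self-containedness for (i) and (iii) at the cost of a little extra verification, the paper's buys brevity by leaning harder on Artin--Mazur.
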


\begin{proof}The representability and (i) are shown in \cite[Prop.~IV.1.8]{ArtinMazur77}. 

To prove (ii), note that since $\rH^1(\fX_A,\,\bG_m) = \Pic \fX_A$ is torsion-free, we have a natural isomorphism
\[
	\rH^1_\fl(\fX_A,\mu_{p^r}) = \rH^0(\fX_A,\,\bG_m) \otimes \bZ/p^r\bZ.
\]
Taking the colimit over $r$ we obtain a natural isomorphism
\[
	\rH^1_\fl(\fX_A,\mu_{p^\infty}) = \colim_r \rH^1_\fl(\fX_A,\mu_{p^r})
	= \rH^0(\fX_A,\,\bG_m) \otimes (\bQ_p /\bZ_p),
\]
and in particular we see that $\rH^1_\fl(\fX_A,\, \mu_{p^{\infty}}) $ is $p$-divisible. 
Now the long exact sequence associated to
\[
	1 \longto \mu_{p^r} \longto \mu_{p^{\infty}} \overset{p^r}\longto \mu_{p^{\infty}} \longto 1
\]
induces an isomorphism $\rH^2_\fl(\fX_A,\, \mu_{p^r}) \cong \rH^2_\fl(\fX_A,\, \mu_{p^\infty})[p^r]$, which proves (ii). 

A similar argument shows (iii) for $A$ with $A/\fm$ algebraically closed, after which Lemma~\ref{lemma:galois-invariants} implies the general case.
\end{proof}

\subsection{Canonical lifts}\label{sec:canonical-lifts}

Let $X/k$ be an ordinary K3 surface. Since formal groups of height $1$ are rigid, the $p$-divisible group $\Psi^\circ(X)$ over $k$ extends uniquely to a $p$-divisible group $\Psi^\circ(X)_\can$ over $\Lambda$. Also the \'etale $p$-divisible group $\Psi^\et(X)$ over $k$ extends uniquely to a $p$-divisible group $\Psi^\et(X)_\can$ over $\Lambda$. 

To every lift $\fX/\Lambda$ of $X/k$ we then have an associated short exact 
sequence of $p$-divisible groups
\begin{equation}\label{eq:lift-etale-local}
	0 \longto \Psi^\circ(X)_\can \longto \Psi(\fX) \longto \Psi^\et(X)_\can \longto 0
\end{equation}
over $\Lambda$. In analogy with Serre-Tate theory, we have the following theorem.

\begin{theorem}[Nygaard {\cite[Thm.~1.6]{Nygaard83}}]
The map 
\[
	\big\{ \,\text{formal lifts $\fX/\Lambda$ of $X/k$} \,\big\}
	\to \Ext^1_{\Lambda}\big(\Psi^\et(X)_\can,\,\Psi^\circ(X)_\can\big),\, \fX \mapsto \Psi(\fX)
\]
is a bijection. \qed
\end{theorem}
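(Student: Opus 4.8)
The plan is to run the Serre--Tate argument, comparing the deformation theory of $X$ with that of the ordinary $p$-divisible group $\Psi(X)$. First I would pass to the Artinian level: a formal lift of $X$ over $\Lambda$ is the same as a compatible system of lifts over the quotients $\Lambda/\fm^n$, and $\Ext^1_\Lambda(\Psi^\et(X)_\can,\Psi^\circ(X)_\can)=\plim_n \Ext^1_{\Lambda/\fm^n}(\Psi^\et(X)_\can,\Psi^\circ(X)_\can)$ (the transition maps are surjective by formal smoothness, so there is no $\plim^1$ term). Hence it suffices to prove that for each $(A,\fm)\in\Art_\Lambda$ the map $\fX\mapsto\Psi(\fX)$ is a bijection, functorially in $A$. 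I would then reinterpret the target: since $\Psi^\circ(X)$ has height $1$ and $\Psi^\et(X)$ is \'etale, both are rigid, so every deformation of the $p$-divisible group $\Psi(X)$ to $A$ has connected and \'etale parts equal to $\Psi^\circ(X)_\can$ and $\Psi^\et(X)_\can$; the set of such deformations is therefore exactly $\Ext^1_A(\Psi^\et(X)_\can,\Psi^\circ(X)_\can)$, and our map becomes a natural transformation between two deformation functors on $\Art_\Lambda$.

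Second, I would check that both functors are formally smooth over $\Lambda$ of relative dimension $20$, so that it is enough to prove the induced map on tangent spaces is bijective. On the source this is the unobstructedness of K3 surfaces: as $K_X\cong\cO_X$ one has $T_X\cong\Omega^1_X$, the tangent space is $\rH^1(X,T_X)\cong\rH^1(X,\Omega^1_X)$ of dimension $20$, and the obstruction space $\rH^2(X,T_X)\cong\rH^0(X,\Omega^1_X)^\vee$ vanishes. On the target, deformations of $p$-divisible groups are formally smooth by Grothendieck--Messing, and the tangent space to $\Ext^1_A(\Psi^\et(X)_\can,\Psi^\circ(X)_\can)$ is
\[
	\Hom_{\bZ_p}\big(T_p\Psi^\et(X),\,\Lie\Psi^\circ(X)\big),
\]
which is again $20$-dimensional since $\Psi^\et(X)$ has height $20$ and $\Lie\Psi^\circ(X)\cong\rH^2(X,\cO_X)$ is a line. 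A natural transformation between formally smooth functors of the same finite dimension that induces an isomorphism on tangent spaces is an isomorphism, so everything reduces to bijectivity of this one linear map.

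The crux, and the step I expect to be the main obstacle, is to identify the tangent map $\rH^1(X,\Omega^1_X)\to\Hom_{\bZ_p}(T_p\Psi^\et(X),\Lie\Psi^\circ(X))$ and show it is an isomorphism. The right tool is the crystalline comparison between $\Psi$ and $\rH^2_\crys$: the Dieudonn\'e crystal of $\Psi(\fX)$ is the slope-$[0,1]$ part of $\rH^2_\crys(\fX)(1)$, under which $T_p\Psi^\et$ matches the unit-root (slope $0$) piece --- the $(1,1)$-part, whose Hodge realization is $\rH^1(X,\Omega^1_X)$ --- while $\Lie\Psi^\circ$ matches $\rH^2(X,\cO_X)$ (Artin--Mazur). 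Under Grothendieck--Messing the extension class of $\Psi(\fX)$ records the first-order motion of the Hodge filtration on this crystal, and under Kodaira--Spencer the first-order deformation of $X$ records the motion of the Hodge filtration on $\rH^2_\dR(\fX)$; matching the two, the tangent map is identified with the transpose isomorphism $\Hom(\rH^{2,0},\rH^{1,1})\isomto\Hom(\rH^{1,1},\rH^{0,2})$ induced by the perfect cup-product pairing on the $(1,1)$-part. Since that pairing is non-degenerate the map is an isomorphism; reconciling the Dieudonn\'e-theoretic and Hodge-theoretic descriptions of the deformation compatibly and integrally is the technical heart. Once it is in place, formal smoothness gives the bijection over each $A$, and passing to the limit yields the statement over $\Lambda$.
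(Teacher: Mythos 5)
Your proposal is correct in outline and is essentially the argument of the cited source: the paper itself gives no proof of this statement (it is quoted with a reference to \cite[Thm.~1.6]{Nygaard83}), and Nygaard's proof is exactly your Serre--Tate-style reduction --- both sides are formally smooth prorepresentable deformation functors on $\Art_\Lambda$ (unobstructedness of K3 deformations on one side, rigidity of $\Psi^\circ$ and $\Psi^\et$ plus smoothness of the $\Ext^1$-functor on the other), and the bijection follows from the tangent map $\rH^1(X,\Omega^1_X) \to \Hom_{\bZ_p}(T_p\Psi^\et(X),\Lie \Psi^\circ(X))$ being an isomorphism, identified via the crystalline realization of $\Psi$ and the cup-product pairing on the unit-root $(1,1)$-part. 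The step you flag as the technical heart --- reconciling the Grothendieck--Messing description of the extension class with Kodaira--Spencer on $\rH^2_\crys$, integrally --- is precisely where Nygaard invokes the Deligne--Illusie theory of ordinary crystals \cite{DeligneIllusie81}, and the char-$p$ inputs you use implicitly ($\rH^0(X,\Omega^1_X)=0$ and $\rH^0(X,T_X)=0$, theorems of Rudakov--Shafarevich in positive characteristic) are the standard ingredients there.
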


It follows that there exists a unique lift $\fX/\Lambda$ for which the sequence (\ref{eq:lift-etale-local}) splits. This $\fX$ is unique up to unique isomorphism, and is called the \emph{canonical lift} of $X$. We denote it by $X_\can$.

\begin{proposition}[{\cite[Prop.~1.8]{Nygaard83}}]\label{prop:line-bundles-lift}
$\Pic X_\can \to \Pic X$ is a bijection. \qed
\end{proposition}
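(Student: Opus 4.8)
The plan is to handle injectivity by the deformation theory of line bundles on a K3 surface, and surjectivity by showing that the obstruction to lifting a line bundle is valued in the connected part of the enlarged formal Brauer group, which is trivialized precisely by the canonical splitting; an ample bundle then algebraizes $X_\can$ through Grothendieck's existence theorem. Write $\fX_n := X_\can \otimes_\Lambda \Lambda/\fm^{n+1}$, so that $\fX_0 = X$ and $\Pic X_\can = \varprojlim_n \Pic \fX_n$. For injectivity, the kernel of the restriction $\Pic \fX_n \to \Pic \fX_{n-1}$ attached to the square-zero thickening $\fX_{n-1}\hookrightarrow \fX_n$ is $\rH^1(X,\cO_X)\otimes_k \fm^n/\fm^{n+1}$, via the exponential sequence. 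Since $X$ is a K3 surface we have $\rH^1(X,\cO_X)=0$, so every transition map is injective and hence so is $\Pic X_\can \to \Pic X$.

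For surjectivity I would fix $L\in \Pic X$ and lift it one step at a time along $\fX_{n-1}\hookrightarrow \fX_n$. The obstruction to extending a lift across this thickening lies in $\rH^2(X,\cO_X)\otimes_k \fm^n/\fm^{n+1}$, while the set of lifts, when nonempty, is a torsor under $\rH^1(X,\cO_X)\otimes_k \fm^n/\fm^{n+1}=0$; thus a lift is unique when it exists, in accordance with the first paragraph. The key structural point is that $\rH^2(X,\cO_X)$ is canonically the Lie algebra of the one-dimensional connected formal Brauer group $\Psi^\circ(X)=\hat\Br(X)$, so the obstruction to lifting $L$ is naturally valued in $\Lie \Psi^\circ(X)\otimes_k \fm^n/\fm^{n+1}$.

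The heart of the matter is to identify this obstruction with the component, in the connected part $\Psi^\circ$, of the deformation of the first Chern class of $L$ inside the enlarged formal Brauer group $\Psi$. Using the Kummer sequence together with the theorem describing $\Psi[p^r]$ through $\mu_{p^r}$-cohomology, one checks that the Chern classes of line bundles on $X$ factor through the étale quotient $\Psi^\et(X)$, the connected part $\Psi^\circ=\hat\Br$ being the transcendental formal Brauer direction. Because $X_\can$ is the canonical lift, the sequence $0\to \Psi^\circ(X)_\can \to \Psi(X_\can) \to \Psi^\et(X)_\can \to 0$ splits, so the class of $L$ admits a horizontal deformation inside the étale summand and acquires no component in $\Psi^\circ$. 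Granting the comparison, the obstruction in $\Lie \Psi^\circ(X)\otimes_k \fm^n/\fm^{n+1}$ therefore vanishes at every stage, and $L$ lifts uniquely to $\Pic X_\can$. I expect this comparison—between the line-bundle lifting obstruction and the $\Psi^\circ$-component of the deformed Chern class, which is in essence a Dieudonné/crystalline computation relating $\rH^2(X,\cO_X)$, the formal Brauer group, and the de Rham Chern class—to be the main obstacle; for a non-canonical lift this same component is nonzero and records exactly the failure of $L$ to extend.

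Finally I would apply the above to an ample $L$ on $X$: its lift $\widetilde L$ to the formal scheme $X_\can$ is formally ample, so Grothendieck's existence theorem algebraizes $X_\can$ to a projective K3 surface over $\Lambda$ and identifies its formal line bundles with algebraic ones. Combined with the injectivity established above, this shows that $\Pic X_\can \to \Pic X$ is a bijection.
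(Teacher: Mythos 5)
The paper gives no argument for this proposition at all---it is quoted directly from Nygaard \cite[Prop.~1.8]{Nygaard83}---so the benchmark is Nygaard's proof, and your outline follows the same strategy as his: injectivity from $\rH^1(X,\cO_X)=0$, surjectivity from the splitting $\Psi(X_\can)=\Psi^\circ(X)_\can\oplus\Psi^\et(X)_\can$ that defines the canonical lift, and algebraization by lifting an ample bundle and applying Grothendieck existence. Those outer steps of your sketch are correct and standard (one small remark: your observation that ``Chern classes of line bundles factor through the \'etale quotient $\Psi^\et(X)$'' is automatic over $k$ and carries no content, since a connected finite group scheme over a perfect field has only the trivial $k$-point, so $\Psi[p^r](k)=\Psi^\et[p^r](k)$ for \emph{every} class).

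The genuine gap is the step you explicitly ``grant'': it is not a technical verification but the entire content of the proposition, and it hides two concrete difficulties. First, the comparison itself: the deformation obstruction $\mathrm{ob}(L)\in\rH^2(X,\cO_X)\otimes_k\fm^n/\fm^{n+1}$ is the boundary of $[L]$ for the sequence $1\to\cO_X\otimes\fm^n/\fm^{n+1}\to\cO_{\fX_n}^\times\to\cO_{\fX_{n-1}}^\times\to 1$, whereas the enlarged formal Brauer group sees $L$ only through $c_1(L)\bmod p^r$ in $\rH^2_\fl(X,\mu_{p^r})=\Psi(X)[p^r](k)$. Relating the two forces you through the Kummer sequence, where a class in $\rH^2_\fl(\fX_n,\mu_{p^r})$ lifting $c_1(L)$ need not be algebraic: it is so only modulo $\Br(\fX_n)[p^r]$, and the restriction $\Br(\fX_n)\to\Br(X)$ is \emph{not} injective---its kernel is governed by precisely the group $\hat\Br$ you are trying to exploit---so the vanishing of the Brauer discrepancy is not free. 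Second, even once the splitting hands you compatible lifts of $c_1(L)\bmod p^r$ for all $r$, this only shows that the image of $\Pic\fX_n\to\Pic X$ contains classes congruent to $L$ modulo $p^r\Pic X$; upgrading this to actual surjectivity requires a closing argument, for instance additivity of $\mathrm{ob}$ together with the fact that it lives in a $k$-vector space (so prime-to-$p$ multiples of $L$ can be divided off) and finite generation of $\Pic X$ to pass from congruences for all $r$ to membership in the image. These diagram chases are exactly what constitutes Nygaard's proof of Prop.~1.8; as written, your text is a correct plan with the crux asserted rather than proven.
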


\begin{corollary}[{\cite[Prop.~1.8]{Nygaard83}}]
$X_\can$ is algebraizable and projective. \qed
\end{corollary}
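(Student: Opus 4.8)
The plan is to invoke Grothendieck's formal existence theorem (formal GAGA), for which the essential input is a relatively ample line bundle on the formal scheme $X_\can$, manufactured from an ample bundle on the special fiber by means of Proposition \ref{prop:line-bundles-lift}.

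First I would choose an ample line bundle $\cL_0$ on $X$, which exists since $X/k$ is a projective K3 surface. By Proposition \ref{prop:line-bundles-lift} the restriction map $\Pic X_\can \to \Pic X$ is a bijection, so there is a unique line bundle $\cL$ on the formal scheme $X_\can$ with $\cL|_X \cong \cL_0$. Concretely, $\cL$ is a compatible system of line bundles $\cL_n$ on the Artinian thickenings $X_n := X_\can \otimes_\Lambda \Lambda/\fm^{n+1}$.

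Next I would check that $\cL$ is relatively ample. Each $X_n$ is proper over the Artinian local ring $\Lambda/\fm^{n+1}$ and shares its underlying topological space with the reduction $X$. Since relative ampleness for a proper morphism can be tested on fibers, and the closed fiber of $X_n \to \Spec \Lambda/\fm^{n+1}$ carries the ample bundle $\cL_0$, each $\cL_n$ is ample; hence $\cL$ is a relatively ample invertible sheaf on the formal scheme $X_\can$. Finally, Grothendieck's algebraization theorem (EGA~III.5.4.5) applies: a proper formal scheme over $\Spf \Lambda$, with $\Lambda$ complete noetherian, equipped with a relatively ample line bundle is the formal completion of a unique projective $\Lambda$-scheme $\fX$, and $\cL$ descends to an ample bundle on $\fX$. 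This $\fX$ is the desired algebraization; it is projective by construction, and smooth over $\Spec\Lambda$ since smoothness is detected on the closed fiber $X$, so $\fX$ is a projective K3 surface lifting $X$ with formal completion $X_\can$.

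The main obstacle is verifying the hypotheses of the algebraization theorem, specifically confirming that the lifted bundle $\cL$ is genuinely ample in the formal sense. This reduces to the insensitivity of ampleness to nilpotent thickenings, after which the citation of formal GAGA is routine; no further deformation-theoretic input beyond Proposition \ref{prop:line-bundles-lift} is needed.
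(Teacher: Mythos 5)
Your proof is correct and is essentially the argument the paper intends: the corollary is deduced from Proposition~\ref{prop:line-bundles-lift} exactly as in Nygaard's Prop.~1.8, by lifting an ample line bundle from $X$ to the formal scheme $X_\can$ via the Picard bijection, noting that ampleness is insensitive to the nilpotent thickenings $X_n$, and invoking Grothendieck's algebraization theorem (EGA~III.5.4.5). Your supplementary checks (smoothness of the algebraization via openness of the smooth locus and properness) are sound and match the standard treatment.
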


\section{$p$-adic \'etale cohomology}

Let $\cO_K$ be a complete discrete valuation ring whose residue field $k$ is perfect of characteristic $p$ and whose fraction field $K$ is of characteristic $0$.

\subsection{$p$-adic \'etale cohomology and the enlarged formal Brauer group}

\begin{theorem}\label{thm:formal-brauer-etale-cohomology}
 Let $\fX$ be a projective K3 surface over $\cO_K$. Assume that $\fX_k$ is ordinary. Then there is a natural
injective map of $\Gal_K$-modules
\[
	T_p \Psi(\fX)_{\bar K} \to \rH^2_\et(X_{\bar K}, \bZ_p(1))
\]
whose cokernel is a free $\bZ_p$-module of rank $1$. 
\end{theorem}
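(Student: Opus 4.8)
The plan is to exhibit the desired arrow as the fppf base-change (specialization) morphism for $\mu_{p^r}$ along the generic point, and then to treat injectivity and the freeness of the cokernel separately; the latter is the crux.

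\textbf{Construction of the map.} Write $\pi\colon\fX\to\Spec\cO_K$ for the structure morphism. The preceding theorem identifies the finite flat $\cO_K$-group scheme $\Psi(\fX)[p^r]$ through $\Psi(\fX)[p^r](A)=\rH^2_\fl(\fX_A,\mu_{p^r})$ on Artinian $A$, exhibiting it as (the sheaf built from) $R^2\pi_{\fl *}\mu_{p^r}$. Since $\Psi(\fX)[p^r]$ is finite over $\cO_K$, we have $\Psi(\fX)[p^r](\bar K)=\Psi(\fX)[p^r](\cO_{\bar K})$, and $T_p\Psi(\fX)_{\bar K}=\plim_r \Psi(\fX)[p^r](\bar K)$. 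Over the generic fibre $\mu_{p^r}$ is étale, so $\rH^2_\fl(X_{\bar K},\mu_{p^r})=\rH^2_\et(X_{\bar K},\mu_{p^r})$, and the canonical base-change morphism for $R^2\pi_{\fl *}\mu_{p^r}$ along $\Spec\bar K\to\Spec\cO_K$ induces on $\bar K$-points a $\Gal_K$-equivariant map $\Psi(\fX)[p^r](\bar K)\to \rH^2_\et(X_{\bar K},\mu_{p^r})$ (to identify the source with the sections of the integral sheaf one passes to the limit over the Artinian quotients $\cO_L/\fm^n$ of the complete subrings $\cO_L\subset\cO_{\bar K}$ and invokes the structure theorem and Lemma \ref{lemma:torsion-of-formal-brauer}). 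These maps are compatible in $r$; passing to the inverse limit and using $\rH^2_\et(X_{\bar K},\bZ_p(1))=\plim_r\rH^2_\et(X_{\bar K},\mu_{p^r})$ yields the natural map $T_p\Psi(\fX)_{\bar K}\to \rH^2_\et(X_{\bar K},\bZ_p(1))$.

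\textbf{Injectivity.} Both sides are finite free $\bZ_p$-modules: the source because $\Psi(\fX)$ is a $p$-divisible group, the target because the $p$-adic cohomology of a K3 surface is torsion-free. Hence injectivity may be checked after inverting $p$, where it is exactly the rational comparison $T_p\Psi(\fX)_{\bar K}\otimes\bQ_p\hookrightarrow \rH^2_\et(X_{\bar K},\bQ_p(1))$ of Artin--Mazur cited in the introduction. A rank count pins the image down: by the connected-étale sequence $\Psi^\circ(\fX)=\hat\Br(\fX)$ has height $1$ and $\Psi^\et(\fX)$ has height $20$, so $T_p\Psi(\fX)_{\bar K}$ has rank $21$ while $\rH^2_\et(X_{\bar K},\bZ_p(1))$ has rank $22$; the cokernel therefore has rank $1$, with Hodge--Tate weight $-1$, corresponding after the Tate twist to the unit-root (slope $0$) line of $\rH^2_\crys(\fX_k/W)$.

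\textbf{Freeness of the cokernel.} By Smith normal form, the cokernel of an injection $\bZ_p^{21}\hookrightarrow\bZ_p^{22}$ is free exactly when the reduction modulo $p$ remains injective; as both modules are torsion-free this reduction is precisely the finite-level map $\Psi(\fX)[p](\bar K)\to \rH^2_\et(X_{\bar K},\mu_p)$. Everything thus comes down to the injectivity of this one mod-$p$ map. To prove it I would filter $\Psi(\fX)[p]$ by its connected-étale sequence and analyse the two graded pieces via Lemma \ref{lemma:torsion-of-formal-brauer}: the connected part contributes the class of $\hat\Br(\fX)[p]=\Psi^\circ[p]$, and the étale quotient is governed on the special fibre by $\rH^2_\fl(\fX_{\bar k},\mu_p)$ (Lemmas \ref{lemma:ordinary-p-divisible} and \ref{lemma:galois-invariants}). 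Ordinarity of $\fX_k$—forcing $\Psi^\circ$ to be the multiplicative $\hat\Br(\fX)$ of height $1$ and the complement to be étale, so that the relevant Frobenius is bijective—should guarantee that no nonzero class in either piece is killed by restriction to the generic fibre, the single missing dimension being the $\rH^2(\fX_k,\cO)$-direction.

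\textbf{Main obstacle.} The heart of the argument is this last mod-$p$ injectivity, equivalently the statement that the image of $T_p\Psi(\fX)_{\bar K}$ is a \emph{saturated} sublattice; this is exactly what upgrades the rational comparison of Artin--Mazur to the asserted integral one. The delicate point inside it is the failure of proper base change for $\mu_{p^r}$ in residue characteristic $p$: the kernel of the finite-level restriction is a priori supported on the special fibre, and the work lies in showing that these potential classes vanish, which is where the ordinarity hypothesis must be used essentially.
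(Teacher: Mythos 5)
Your reduction is set up correctly and, up to the last step, follows the same skeleton as the paper: the map is built from the finite-level identifications $\Psi(\fX)[p^r](L)=\rH^2_\fl(\fX_{\cO_L},\mu_{p^r})$ (the paper's Lemma~\ref{lemma:completion} and Corollary~\ref{cor:generic-fiber}), and freeness of the cokernel is, as you say, exactly the injectivity of $\rho\otimes\bZ/p\bZ=\rho_1\colon \Psi(\fX)[p](\bar K)\to\rH^2_\et(\fX_{\bar K},\mu_p)$ (the paper phrases this as $\Tor(Q,\bZ/p\bZ)=0$). But you stop precisely at the crux: your final two paragraphs only assert that ordinarity ``should guarantee'' that no class dies under restriction to the generic fibre, and the connected--\'etale d\'evissage of $\Psi(\fX)[p]$ you propose supplies no mechanism for proving this. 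That is a genuine gap, and it is where the paper's actual new input lies. The paper proves injectivity at \emph{every} finite level $r$, over every finite extension $L$ of $K$, by a Brauer-group argument on the integral model (Proposition~\ref{prop:torsion-injective}): comparing the Kummer sequences of $\fX$ and $\fX_K$, the map $(\Pic\fX)\otimes\bZ/p^r\bZ\to(\Pic\fX_K)\otimes\bZ/p^r\bZ$ is an isomorphism because the special fibre is a principal divisor, and $(\Br'\fX)[p^r]\to(\Br'\fX_K)[p^r]$ is injective because by the theorem of Gabber and de Jong $\Br=\Br'$ for the projective scheme $\fX$ --- this is where projectivity is used essentially --- and by Grothendieck both $\Br\fX$ and $\Br\fX_K$ inject into $\Br K(\fX_K)$ since $\fX$ is regular. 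This single statement simultaneously yields injectivity of $\rho$ and the saturation of its image, with no appeal to the rational comparison of Artin--Mazur (which your injectivity step invokes, and which would additionally require identifying your integrally constructed map with theirs).

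Note also that your closing heuristic mislocates the role of ordinarity: the finite-level injectivity $\rH^2_\fl(\fX,\mu_{p^r})\hookrightarrow\rH^2_\fl(\fX_K,\mu_{p^r})$ holds for any projective K3 surface over $\cO_K$, ordinary or not, and Frobenius bijectivity on $\rH^2(\fX_k,\cO)$ plays no part in it. Ordinarity enters only upstream, to make $\Psi(\fX)$ a $p$-divisible group of height $21$ with $\Psi(\fX)[p^r](A)=\rH^2_\fl(\fX_A,\mu_{p^r})$ --- i.e., for the identification of the source, the equality $T_p\Psi(\fX)_{\bar K}\otimes\bZ/p\bZ=\Psi(\fX)[p](\bar K)$, and the rank count giving a rank-one cokernel. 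So your instinct that the saturation statement is ``the single missing dimension'' is right, but the argument that closes it is arithmetic--geometric (Picard and Brauer groups of the regular projective model), not a Frobenius computation on the special fibre; as written, your proposal does not contain a proof of the theorem's integral refinement.
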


Recall that if $\fX$ is ordinary, then $T_p \Psi(\fX)_{\bar K}$ has rank $21$. Up to possible torsion in the cokernel, Theorem~\ref{thm:formal-brauer-etale-cohomology} is shown in \cite[\S~IV.2]{ArtinMazur77}. The proof of Artin and Mazur is based on Lefschetz pencils, reducing the problem on $\rH^2$ to a statement about $\rH^1$ and torsors. We give a  proof working directly with the $\rH^2$ and their relation to Brauer groups to obtain the finer `integral' statement above. This is made possible by the theorem of Gabber and de Jong \cite{deJong04} asserting that the Brauer group and the cohomological Brauer group of a quasi-projective scheme coincide.

%\begin{lemma}Let $A_1 \injfrom A_2 \injfrom \cdots$ be a projective system of finitely generated free $\bZ_p$-modules and injective maps. Then for every $r\geq 0$ we have
%\begin{enumerate}
%\item $\rR^1\!\lim_n A_n = 0$,
%\item $\lim_n (A_n/p^r) = (\lim_n A_n)/p^r$,
%\item $\rR^1\!\lim_n (A_n/p^r) = 0$. 
%\end{enumerate}
%\end{lemma}
%
%\begin{proof}Set $A_\infty := \lim_n A_n = \cap_n A_n$. Consider the short exact sequence of projective systems
%\[
%	0 \longto (A_n)_n \longto (A_1)_n \longto (A_1/A_n)_n \longto 0.
%\]
%The associated long exact sequence computes to
%\[
%	0 \longto A_\infty \longto A_1 \longto A_1/A_\infty \longto \rR^1\!\lim_n A_n \longto 0.
%\]
%Since the map $A_1 \to A_1/A_\infty$ is surjective, the first claim follows. Now it suffices to consider
%to long exact sequence associated with
%\[
%	0 \longto (A_n)_n \overset{p^r}{\longto}  (A_n)_n \longto (A_n/p^r)_n \longto 0
%\]
%to obtain the final two claims.
%\end{proof}

\medskip\noindent
Let $\fX$ be a formal K3 surface over $\cO_K$. We denote by $\fX_n$ the truncation $\fX_{\cO_K/\fm^n}$.

\begin{lemma}\label{lemma:completion}
For all $i$ the natural map
\[
	\rH^i_\fl(\fX,\mu_{p^r}) \to \cocolim_n \rH^i_\fl(\fX_n, \mu_{p^r})
\]
is an isomorphism.
\end{lemma}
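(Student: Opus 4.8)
The plan is to reduce the statement, which concerns fppf cohomology of the finite flat group scheme $\mu_{p^r}$, to a statement about the sheaf $\bG_m$ on a single topos, and then to a formal manipulation of derived inverse limits. First I would use the Kummer sequence $1\to\mu_{p^r}\to\bG_m\overset{p^r}{\to}\bG_m\to1$, which is exact on the fppf site, to identify $\mu_{p^r}$ with the two-term complex $[\bG_m\overset{p^r}{\to}\bG_m]$ in the fppf derived category. Since $\bG_m$ is smooth, Grothendieck's theorem $\rH^\bullet_\fl(-,\bG_m)=\rH^\bullet_\et(-,\bG_m)$ lets me compute these groups — for every $\fX_n$ and for $\fX$ itself — as the \emph{\'etale} hypercohomology of $[\bG_m\overset{p^r}{\to}\bG_m]$. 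Invoking topological invariance of the small \'etale topos, I identify all the topoi $\fX_{n,\et}$ with the single topos $T=\fX_{k,\et}$; under this identification $\bG_m$ on $\fX_n$ becomes the sheaf $\cO^\times_{\fX_n}$ on $T$, the transition maps assemble into the tower $\{\cO^\times_{\fX_n}\}_n$, and the sheaf attached to the formal scheme is $\cO^\times_{\fX}=\varprojlim_n\cO^\times_{\fX_n}$. The lemma thus becomes the assertion that $\rR\Gamma(T,-)$ of $[\cO^\times_{\fX}\overset{p^r}{\to}\cO^\times_{\fX}]$ computes the inverse limit of the $\rR\Gamma(T,[\cO^\times_{\fX_n}\overset{p^r}{\to}\cO^\times_{\fX_n}])$.

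Next comes the formal part. The kernel of $\cO^\times_{\fX_{n+1}}\to\cO^\times_{\fX_n}$ is the coherent sheaf $\fm^n/\fm^{n+1}\otimes_k\cO_{\fX_k}$, so these transition maps are surjective as sheaves on $T$; hence the tower $\{[\cO^\times_{\fX_n}\overset{p^r}{\to}\cO^\times_{\fX_n}]\}_n$ has vanishing sheaf-level $\varprojlim^1$ and its derived inverse limit is $[\cO^\times_{\fX}\overset{p^r}{\to}\cO^\times_{\fX}]$. Because $\rR\Gamma(T,-)$ commutes with $\rR\varprojlim$, I obtain $\rR\Gamma(T,[\cO^\times_{\fX}\overset{p^r}{\to}\cO^\times_{\fX}])=\rR\varprojlim_n\rR\Gamma(T,[\cO^\times_{\fX_n}\overset{p^r}{\to}\cO^\times_{\fX_n}])$. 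The Milnor exact sequence then reads
\[
0\to{\varprojlim_n}^1\,\rH^{i-1}_\fl(\fX_n,\mu_{p^r})\to\rH^i_\fl(\fX,\mu_{p^r})\to\varprojlim_n\rH^i_\fl(\fX_n,\mu_{p^r})\to0,
\]
so, the index set being $\N$ (whence higher derived limits vanish), the lemma is equivalent to the vanishing of ${\varprojlim_n}^1\,\rH^{i}_\fl(\fX_n,\mu_{p^r})$ in every degree.

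Everything therefore hinges on the Mittag-Leffler property for the towers $\{\rH^i_\fl(\fX_n,\mu_{p^r})\}_n$, and this is the step I expect to be the main obstacle. The low degrees are routine: via Kummer and the torsion-freeness of $\Pic\fX_k$ one gets $\rH^0_\fl(\fX_n,\mu_{p^r})=\mu_{p^r}(\cO_K/\fm^n)$ (finite groups, so trivially Mittag-Leffler) and $\rH^1_\fl(\fX_n,\mu_{p^r})=(\cO_K/\fm^n)^\times/p^r$ (surjective transition maps), while finite generation of $\Pic\fX_k$ forces the decreasing images of $\{\Pic\fX_n\}$ to stabilize. The delicate degree is $i=2$: here I would feed in the exact sequence of Lemma \ref{lemma:torsion-of-formal-brauer}, which exhibits $\rH^2_\fl(\fX_n,\mu_{p^r})$ as an extension of a subgroup of the $n$-independent group $\rH^2_\fl(\fX_k,\mu_{p^r})$ by $\hat\Br(\fX)[p^r](\cO_K/\fm^n)$; since $\hat\Br(\fX)$ is a one-dimensional formal group, the latter tower is built from the surjective system $\{\fm/\fm^n\}$ with nilpotent transition kernels. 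More structurally, the successive layers of the tower $\{\rH^i_\fl(\fX_n,\mu_{p^r})\}_n$ are governed by the finite-dimensional $k$-vector spaces $\fm^n/\fm^{n+1}\otimes_k\rH^\bullet(\fX_k,\cO)$ — the K3 vanishing $\rH^1(\fX_k,\cO)=0$ being exactly what keeps the Picard layer injective — and from this finite-dimensionality one deduces the Mittag-Leffler property. Making this reduction precise and uniform in $i$ is the crux of the proof.
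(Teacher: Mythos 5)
Your formal reductions are correct and coincide with the first half of the paper's argument: the Kummer sequence and smoothness of $\bG_m$ identify all the groups with \'etale hypercohomology of $[\cO^\times \overset{p^r}\to \cO^\times]$; the transition maps $\cO^\times_{\fX_{n+1}} \to \cO^\times_{\fX_n}$ are epimorphisms with coherent kernels $\fm^n/\fm^{n+1}\otimes\cO_{\fX_1}$, so the derived inverse limit of the tower is the complex for $\fX$; and since $\rR\Gamma$ commutes with $\rR\varprojlim$, the Milnor sequence reduces the lemma to ${\varprojlim}^1\,\rH^i_\fl(\fX_n,\mu_{p^r})=0$. The genuine gap is the step you yourself flag as the crux: the Mittag--Leffler claims are left unproved, and the heuristics you offer for them are incorrect at two points. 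In degree $0$, the groups $\mu_{p^r}(\cO_K/\fm^n)$ are \emph{not} finite in general: $k$ is only assumed perfect, and for instance every element of $1+pW(k)/p^2W(k)\cong k$ is a $p$-th root of unity in $W(k)/p^2W(k)$, so these groups are infinite whenever $k$ is; the vanishing of ${\varprojlim}^1$ here is true but requires a Newton--Hensel approximation argument (an approximate solution of $x^{p^r}=1$ modulo $\fm^n$ with $n$ large compared to $2r\,v(p)$ refines to an exact one), not finiteness. In degree $2$, exhibiting $C_n$ as a decreasing tower of subgroups of the $n$-independent group $\rH^2_\fl(\fX_k,\mu_{p^r})$ proves nothing: the tower $p^n\bZ\subset\bZ$ has exactly this shape and ${\varprojlim}^1\neq 0$. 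You would need that ambient group to be finite, which holds for ordinary $\fX_k$ (it is then $\Psi(\fX_k)[p^r](k)$) but is not available in general --- the lemma carries no ordinarity hypothesis, and for supersingular $\fX_k$ over infinite $k$ the group $\rH^2_\fl(\fX_k,\mu_{p^r})$ need not be finite. Similarly, $B_n=\hat\Br(\fX)[p^r](\cO_K/\fm^n)$ is the $p^r$-torsion of a surjective tower, and torsion does not inherit surjectivity or the Mittag--Leffler property; ``built from the surjective system $\{\fm/\fm^n\}$ with nilpotent transition kernels'' does not settle it (again one needs an approximation or finite-height isogeny argument for $[p^r]$).

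The idea you are missing, and which is how the paper disposes of all of this at once, is to perform the d\'evissage \emph{before} any torsion or limits enter: set $\cU_n := \ker(\cO^\times_{\fX_n}\to\cO^\times_{\fX_1})$ and $\cU := \ker(\cO^\times_{\fX}\to\cO^\times_{\fX_1})$, so that the quotient tower $\cO^\times_{\fX_1}$ is constant in $n$. Each $\cU_n$ is an iterated extension of the coherent sheaves $\fm^j/\fm^{j+1}\otimes\cO_{\fX_1}$ on the K3 special fiber, so --- using $\rH^1(\fX_1,\cO_{\fX_1})=0$ and $\dim\fX_1=2$ --- the towers $\rH^i_\et(\fX,\cU_n)$ are concentrated in degrees $0$ and $2$ \emph{and have surjective transition maps}. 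Consequently $\rR\varprojlim_n\rR\Gamma_\et(\fX,\cU_n)$ splits in $\cD(\Ab)$ as $\varprojlim_n\rH^0\oplus\varprojlim_n\rH^2[-2]$ with no ${\varprojlim}^1$ contributions, and is identified with $\rR\Gamma_\et(\fX,\cU)$; the lemma then follows by passing back through the constant quotient and the $p^r$-cone. This rearrangement makes every inverse system in sight Mittag--Leffler for trivial reasons and is valid without any ordinarity assumption, whereas your route, even where it can be repaired, needs both ordinarity and additional Hensel-type input. As it stands, your proposal is a correct reduction plus an unproven (and in places incorrectly motivated) Mittag--Leffler assertion, not a proof.
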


\begin{proof}As in the proof of Lemma~\ref{lemma:torsion-of-formal-brauer}, we have
\[
	\rH^i_\fl(\fX_n,\mu_{p^r}) = 
	\rH^i_\et(\fX, \cO_{\fX_n}^\times \overset{p^r}\to  \cO_{\fX_n}^\times )
\]
and similarly 
\[
	\rH^i_\fl(\fX,\mu_{p^r}) = 
	\rH^i_\et(\fX, \cO_{\fX}^\times \overset{p^r}\to  \cO_{\fX}^\times ).
\]
Let $\cU_n$ be the kernel of $\cO_{\fX_n}^\times \to \cO_{\fX_1}^\times$ and $\cU$ the kernel of $\cO_\fX^\times \to \cO_{\fX_1}^\times$. Then by the usual d\'evissage arguments the lemma reduces to showing that 
\[
	\rH^i_\et(\fX,\cU) \to \cocolim_n \rH^i_\et(\fX,\cU_n)
\]
is an isomorphism for all $i$.

Since  the maps $\cU_{n+1} \to \cU_n$ are surjective, we have $\rR\!\lim_n \cU_n=\cU$. Since $\cU$ has a filtration with graded pieces isomorphic to $\cO_{\fX_1}$ it has cohomology concentrated in degrees $0$ and $2$. These two facts imply
\[
	\rR\Gamma_\et(\fX, \rR\!\cocolim_n \cU_n) =  \rH^0_\et(\fX,\cU) \oplus \rH^2_\et(\fX,\cU)[-2]
\]
in $\cD(\Ab)$. Similarly, we have 
\[
	\rR\!\cocolim_n \rR\Gamma_\et(\fX,\cU_n) = \cocolim_n \rH^0_\et(\fX,\cU_n) \oplus \cocolim_n \rH^2_\et(\fX,\cU_n)[-2]
\]
in $\cD(\Ab)$. As  $\rR\Gamma_\et$ commutes with $\rR\!\cocolim$, the lemma follows.
\end{proof}

\begin{corollary}\label{cor:generic-fiber}
If $\fX_k$ is ordinary, then $\Psi(\fX)(K)[p^r] = \rH^2_\fl(\fX,\mu_{p^r})$. 
\end{corollary}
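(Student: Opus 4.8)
The goal is to identify $\Psi(\fX)(K)[p^r]$ with $\rH^2_\fl(\fX,\mu_{p^r})$, where the left-hand side is the $p^r$-torsion in the $K$-points of the enlarged formal Brauer group and the right-hand side is the flat cohomology of the formal scheme $\fX$. The plan is to reduce this to Lemma~\ref{lemma:completion} by unwinding the definition of the $p$-divisible group $\Psi(\fX)$ evaluated at the fraction field $K$, or more precisely on the tower of Artinian truncations of $\cO_K$.

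First I would recall that $\Psi(\fX)$ is a $p$-divisible group over $\cO_K$, so its $K$-points are computed as a limit over the Artinian quotients $\cO_K/\fm^n$: concretely, $\Psi(\fX)(K) = \plim_n \Psi(\fX)(\cO_K/\fm^n)$, since a $p$-divisible group is formally smooth and its value on the complete ring $\cO_K$ is the inverse limit of its values on the truncations. Taking $p^r$-torsion, and using that $\Psi(\fX)[p^r]$ is a finite flat group scheme so that torsion commutes with the limit, I get
\[
	\Psi(\fX)(K)[p^r] = \plim_n \Psi(\fX)(\cO_K/\fm^n)[p^r].
\]
By part (ii) of the preceding theorem, each term on the right is $\Psi(\fX)[p^r](\cO_K/\fm^n) = \rH^2_\fl(\fX_n,\mu_{p^r})$, where $\fX_n = \fX_{\cO_K/\fm^n}$ in the notation fixed just before Lemma~\ref{lemma:completion}. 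Hence the right-hand side is exactly $\plim_n \rH^2_\fl(\fX_n,\mu_{p^r})$.

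Next I would invoke Lemma~\ref{lemma:completion} in degree $i=2$, which supplies a natural isomorphism $\rH^2_\fl(\fX,\mu_{p^r}) \isomto \plim_n \rH^2_\fl(\fX_n,\mu_{p^r})$. Combining this with the computation of the previous paragraph yields the desired identification $\Psi(\fX)(K)[p^r] = \rH^2_\fl(\fX,\mu_{p^r})$, and all maps in sight are natural in $\fX$ and compatible with the Galois and Frobenius structures, so naturality comes for free.

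The step I expect to be the main obstacle is the first one: justifying cleanly that $\Psi(\fX)(K)[p^r]$ is computed as the inverse limit of $\Psi(\fX)[p^r](\cO_K/\fm^n)$ over the truncations. One must be careful that $\Psi(\fX)$ was introduced as a functor on the category $\Art_{\cO_K}$ of Artinian local $\cO_K$-algebras, so that its value on the non-Artinian ring $\cO_K$ (or on $K$) is \emph{defined} via this limit over truncations rather than directly; what really needs checking is that passing to $p^r$-torsion is exact in this limit, which holds because each $\Psi(\fX)[p^r]$ is finite flat (so the transition maps are surjective and the relevant $\plim^1$ vanishes). Once this bookkeeping is in place the corollary follows immediately from the theorem and Lemma~\ref{lemma:completion}.
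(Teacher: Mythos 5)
Your proof takes the same route as the paper's: rewrite $\Psi(\fX)(K)[p^r]$ as an inverse limit of $\Psi(\fX)[p^r](\cO_K/\fm^n)$, identify each term with $\rH^2_\fl(\fX_n,\mu_{p^r})$ by part (ii) of the representability theorem, and conclude by Lemma~\ref{lemma:completion} in degree $2$. The paper's proof is exactly this chain, stated in one line.

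However, the intermediate claim you lead with is false as stated: for the full $p$-divisible group one does \emph{not} have $\Psi(\fX)(K) = \plim_n \Psi(\fX)(\cO_K/\fm^n)$. Already for $\mu_{p^\infty}$ over $\bZ_p$ with $p$ odd (and the connected part of $\Psi(\fX)$ has exactly this multiplicative flavour, so the example is not irrelevant) the left side is trivial, while
\[
	\plim_n \mu_{p^\infty}(\bZ/p^n\bZ) \;=\; \plim_n \bigl(1+p\bZ/p^n\bZ\bigr) \;=\; 1+p\bZ_p,
\]
which is torsion-free of infinite rank over $\bZ$: the inverse limit of the colimits $\colim_r \mu_{p^r}(\bZ/p^n\bZ)$ acquires non-torsion points. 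The repair is to take $p^r$-torsion \emph{first}, which is how the paper's displayed equality is arranged: since $\Psi(\fX)[p^r]=\ker\bigl(p^r\colon \Psi(\fX)\to\Psi(\fX)\bigr)$ as fppf sheaves and sections are left exact, $\Psi(\fX)(A)[p^r]=\Psi(\fX)[p^r](A)$ for any $A$; then $\Psi(\fX)[p^r](K)=\Psi(\fX)[p^r](\cO_K)$ because $\Psi(\fX)[p^r]$ is a \emph{finite} (hence proper) scheme over $\cO_K$ and the valuative criterion applies --- properness, not the formal smoothness you invoke, is the relevant property here; and finally $\Psi(\fX)[p^r](\cO_K)=\plim_n \Psi(\fX)[p^r](\cO_K/\fm^n)$ simply because $\Psi(\fX)[p^r]$ is affine and $\cO_K=\plim_n \cO_K/\fm^n$. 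This ordering also dissolves the issue you flag as the ``main obstacle'': you never need to commute torsion past the limit, since taking $p^r$-torsion is a kernel and kernels commute with inverse limits unconditionally --- no surjectivity of transition maps and no vanishing of $\plim^1$ is needed (and indeed the transition maps $\Psi(\fX)[p^r](\cO_K/\fm^{n+1})\to\Psi(\fX)[p^r](\cO_K/\fm^n)$ need not be surjective). With the first equality justified this way, the remainder of your argument coincides with the paper's proof.
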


\begin{proof}
Indeed, we have 
\[
	\Psi(\fX)(K)[p^r] = \cocolim_n \Psi(\fX)[p^r](\cO_K/\fm^n) = \cocolim_n  \rH^2_\fl(\fX_n,\mu_{p^r}),
\]
so the corollary follows from Lemma \ref{lemma:completion}.
\end{proof}

\begin{proposition}\label{prop:torsion-injective}
If $\fX$ is a projective K3 surface over $\cO_K$ then
for all $r$ the natural map 
$\rH^2_\fl(\fX,\mu_{p^r}) \to \rH^2_\fl(\fX_K,\mu_{p^r})$ is injective.
\end{proposition}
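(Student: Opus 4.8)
The plan is to use the Kummer sequence to split the problem into a statement about Picard groups and one about Brauer groups, each of which can be handled directly; this is the natural route since $p$ is not invertible on $\cO_K$, so the usual \'etale purity/Gysin sequence for $\mu_{p^r}$ is unavailable. Applying the long exact fppf cohomology sequence of $1 \to \mu_{p^r} \to \bG_m \xrightarrow{p^r} \bG_m \to 1$ to $\fX$ and to $\fX_K$, and using $\rH^1_\fl(-,\bG_m)=\Pic$ together with the cohomological Brauer group $\Br := \rH^2_\fl(-,\bG_m)$ (\'etale and fppf cohomology agreeing for the smooth group $\bG_m$), I would obtain a commutative diagram with exact rows
\[
\begin{tikzcd}[column sep=small]
0 \arrow{r} & \Pic(\fX)/p^r \arrow{r}\arrow{d} & \rH^2_\fl(\fX,\mu_{p^r}) \arrow{r}\arrow{d} & \Br(\fX)[p^r] \arrow{r}\arrow{d} & 0 \\
0 \arrow{r} & \Pic(\fX_K)/p^r \arrow{r} & \rH^2_\fl(\fX_K,\mu_{p^r}) \arrow{r} & \Br(\fX_K)[p^r] \arrow{r} & 0
\end{tikzcd}
\]
whose vertical maps are restriction to the generic fiber. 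By a short diagram chase (the four lemma) it then suffices to show that the outer two vertical maps are injective.

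For the left-hand map I would prove the stronger statement that restriction $\Pic(\fX)\to\Pic(\fX_K)$ is an \emph{isomorphism}. As $\fX$ is smooth over the regular ring $\cO_K$ it is regular and integral, and its special fiber $\fX_k$ is a single smooth, geometrically integral (hence irreducible) K3 surface cut out by a uniformizer $\pi$. The excision sequence for this divisor reads $\bZ\cdot[\fX_k]\to\Pic(\fX)\to\Pic(\fX_K)\to 0$, and $[\fX_k]=\divi(\pi)=0$, so restriction is an isomorphism. The point to watch is that one genuinely needs an isomorphism rather than mere injectivity here: reducing an injection of abelian groups modulo $p^r$ need not stay injective, while an isomorphism remains one, so $\Pic(\fX)/p^r\to\Pic(\fX_K)/p^r$ is injective.

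For the right-hand map I would invoke Grothendieck's theorem that the cohomological Brauer group of a regular integral scheme injects into that of its generic point, and hence into that of any dense open subscheme. Since $\fX$ is regular and integral and $\fX_K\subset\fX$ is dense open, this yields an injection $\Br(\fX)\hookrightarrow\Br(\fX_K)$, which restricts to an injection on $p^r$-torsion. Together with the diagram chase this proves the proposition. The only substantial external input is this injectivity for Brauer groups of regular schemes (which, unlike the comparison $\Br=\Br'$ of Gabber--de Jong used elsewhere, is classical); the one subtlety to be careful about is upgrading the Picard comparison to an isomorphism before passing modulo $p^r$.
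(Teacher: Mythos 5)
Your proof is correct, and it follows essentially the same route as the paper: the same Kummer-sequence decomposition into Picard and Brauer parts, the same observation that $\Pic \fX \to \Pic \fX_K$ is an \emph{isomorphism} because the special fiber is an irreducible principal divisor (the paper phrases this exactly as ``$\fX_k$ is a principal divisor in $\fX$''), and the same four-lemma chase. The one step where you genuinely diverge is the Brauer-group comparison. The paper works with the Azumaya Brauer group: it first invokes Gabber--de Jong to identify $\Br = \Br'$ for the projective $\fX$ and $\fX_K$, and then applies \cite[Cor.~1.8]{GrothendieckBrauerII} to get injectivity of $\Br \fX$ and $\Br \fX_K$ into $\Br K(\fX_K)$. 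You instead apply the injectivity directly at the level of the cohomological Brauer group $\rH^2_\et(-,\bG_m)$ of a regular integral scheme, which is indeed classical (it follows from the divisor sequence $0 \to \bG_m \to j_*\bG_{m,\eta} \to \mathrm{Div} \to 0$ together with $R^1 j_* \bG_{m,\eta} = 0$, and is commonly cited as the content of the same Cor.~1.8), and your factorization through the dense open $\fX_K$ is legitimate by functoriality of restriction to the generic point. Your variant is thus sound and marginally more economical: it makes this particular proposition independent of projectivity (smooth and proper with integral special fiber suffices) and of the Gabber--de Jong theorem, which the paper uses here only to translate between $\Br'$ and $\Br$ before quoting Grothendieck; note, though, that Gabber--de Jong is still flagged by the author in connection with the surrounding Theorem~\ref{thm:formal-brauer-etale-cohomology}. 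Your cautionary point that the Picard comparison must be an isomorphism, not merely an injection, before reducing modulo $p^r$ is exactly the role the principal-divisor argument plays in the paper's proof.
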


\begin{proof}
The Kummer sequence gives a commutative diagram with exact rows
\[
\begin{tikzcd}
	0 \rar & (\Pic \fX)\otimes \bZ/p^r\bZ  \dar \rar
		& \rH^2_\fl(\fX,\mu_{p^r}) \dar \rar
		& (\Br' \fX)[p^r] \dar \rar & 0 \\
	0 \rar & (\Pic \fX_K)\otimes \bZ/p^r\bZ \rar
		& \rH^2_\fl(\fX_K,\mu_{p^r}) \rar 
		& (\Br' \fX_K)[p^r] \rar & 0
\end{tikzcd}
\]
Since $\fX$ is projective, we have $\Br =\Br'$ for $\fX$ and $\fX_K$. 

 The left arrow in the diagram is an isomorphism since the special fiber $\fX_k$ is a principal divisor in $\fX$, so that $\Pic \fX \to \Pic \fX_K$ is an isomorphism. By \cite[Cor~1.8]{GrothendieckBrauerII} the natural maps of
 $\Br \fX$ and $\Br \fX_K$ to $\Br K(\fX_K)$ are injective, so that also the right arrow in the diagram is injective. We conclude that the middle map is injective.
\end{proof}

\begin{proof}[Proof of Theorem \ref{thm:formal-brauer-etale-cohomology}]
The proof is now formal. By Corollary \ref{cor:generic-fiber} and Proposition \ref{prop:torsion-injective} we have for every $r$ and every finite extension $K\subset L$ a canonical injection
\[
	\Psi(\fX)[p^r](L) \to \rH^2_\fl(\fX_L,\mu_{p^r}) = \rH^2_\et(\fX_L, \bZ/p^r\bZ(1) ).
\]
Taking the colimit over all $L$ we obtain a $\Gal_K$-equivariant injective map
\[
	\rho_r\colon \Psi(\fX)[p^r](\bar K) \to \rH^2_\et(\fX_{\bar K}, \bZ/p^r\bZ(1) ),
\]
and taking the limit over $r$ we obtain a $\Gal_K$-equivariant injective map
\[
	\rho\colon T_p \Psi(\fX)_{\bar K} \to  \rH^2_\et(\fX_{\bar K}, \bZ_p(1)).
\]
Denote the cokernel of $\rho$ by $Q$. Tensoring $\rho$ with $\bZ/p\bZ$ yields
an exact sequence
\[
	0 \longto \Tor(Q,\bZ/p\bZ) \longto \Psi(\fX)[p](\bar K) \overset{\rho\otimes \bZ/p\bZ}\longto \rH^2_\et(\fX_{\bar K}, \bZ/p\bZ(1) )
	\longto Q \otimes \bZ/p\bZ  \longto 0.
\]
Since $\rho_1=\rho\otimes \bZ/p\bZ$ is injective, we see that $\Tor(Q,\bZ/p\bZ)$ vanishes and that $Q$ is torsion-free.
\end{proof}

\subsection{Canonical lifts and $p$-adic \'etale cohomology}

In this section we prove Theorem \ref{bigthm:etale-characterisation}, characterizing the canonical lift in terms of $p$-adic \'etale cohomology.

\begin{lemma}\label{lemma:hyperbolic-plane}
Let $U$ be a free $\bZ_p$-module of rank $2$ and $b\colon U\times U \to \bZ_p$ a non-degenerate symmetric bilinear form. Let $L\subset U$ be a totally isotropic rank $1$ submodule. If $L$ is saturated in
\[
	U^\vee := \{ x \in  \bQ_p \otimes_{\bZ_p} U\, \mid b(x,U) \subset \bZ_p \}, 
\]
then $U^\vee=U$.
\end{lemma}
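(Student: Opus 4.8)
The plan is to reduce everything to an explicit computation in a well-chosen basis. The starting observation is that, since $b$ takes values in $\bZ_p$ on $U$, one always has $U \subset U^\vee$, and non-degeneracy of $b$ guarantees that $U^\vee/U$ is finite; indeed $U^\vee = U$ precisely when the discriminant of $b$ (the determinant of a Gram matrix) is a unit in $\bZ_p$. So the task is to show that the saturation hypothesis on $L$ forces the discriminant to be a unit.

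First I would record that $L$ is in fact saturated already in $U$. From $L \subset U \subset U^\vee$ the map $U/L \injto U^\vee/L$ is injective, and a submodule of the torsion-free module $U^\vee/L$ is torsion-free; hence $U/L$ is torsion-free and $L$ is a direct summand of $U$. Choosing a generator $e$ of $L$, I can therefore extend it to a basis $\{e,f\}$ of $U$. Since $L$ is totally isotropic, $b(e,e)=0$, so the Gram matrix takes the form
\[
	\begin{pmatrix} 0 & a \\ a & c \end{pmatrix}, \qquad a := b(e,f),\quad c:= b(f,f),
\]
with discriminant $-a^2$. Non-degeneracy of $b$ is exactly the condition $a\neq 0$.

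Next I would compute the intersection $(\bQ_p\otimes L)\cap U^\vee$ inside $\bQ_p\otimes U$. For $\lambda \in \bQ_p$ one has $b(\lambda e, e)=0$ automatically and $b(\lambda e, f)=\lambda a$, so $\lambda e \in U^\vee$ if and only if $\lambda a \in \bZ_p$, i.e.\ $\lambda \in a^{-1}\bZ_p$. Thus $(\bQ_p\otimes L)\cap U^\vee = a^{-1}\bZ_p\cdot e$. The hypothesis that $L$ is saturated in $U^\vee$ says precisely that this intersection equals $L = \bZ_p\cdot e$, which forces $a^{-1}\in\bZ_p$; combined with $a\in\bZ_p$ this gives $a\in\bZ_p^\times$.

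Finally, the discriminant $-a^2$ is then a unit, so the Gram matrix is invertible over $\bZ_p$ and $U^\vee = U$, as required. The computation is entirely routine; the only point that requires a moment's care is the first step, namely that the saturation condition is genuinely imposed one lattice up (in $U^\vee$ rather than in $U$), and it is exactly this extra room that pins $a$ down to be a unit rather than merely nonzero.
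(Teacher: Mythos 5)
Your proof is correct and takes essentially the same route as the paper's: pass from saturation in $U^\vee$ to saturation in $U$, choose a basis $(e,f)$ of $U$ with $L=\langle e\rangle$, use the saturation hypothesis to force $d:=b(e,f)$ to be a unit, and conclude from the discriminant $-d^2\in\bZ_p^\times$ that $U^\vee=U$. The only differences are cosmetic: the paper simply notes that $e/d\in U^\vee$ and invokes saturation, where you compute the full intersection $(\bQ_p\otimes L)\cap U^\vee=a^{-1}\bZ_p\cdot e$, and you spell out the step (left implicit in the paper) that saturation in $U^\vee$ implies saturation in $U$.
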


\begin{proof}
Since $L$ is saturated in $U^\vee$, it is also saturated in $U\subset U^\vee$ and we may choose a basis $(e,f)$ for $U$ with $L=\langle e\rangle$. Set $d := b(e,f)$. Since $b(e,e)=0$, the determinant of $b$ is $-d^2$. Since $e/d$ lies in $U^\vee$, we must have that $d$ is a unit and therefore $U^\vee = U$.
\end{proof}

\begin{proof}[Proof of Theorem \ref{bigthm:etale-characterisation}]
Assume that (ii) holds. Then we have
\[
	\rH^2_\et(\fX_{\bar K},\bZ_p(1)) = H^0(1) \oplus H^1 \oplus H^2(-1)
\]
with the $H^i$ unramified. Since the Tate module of a $p$-divisible group is Hodge-Tate of weights $0$ and $-1$, we have that $\Hom(T_p\Psi(\fX)_{\bar K}, H^2(-1)) =0$, and by Theorem \ref{thm:formal-brauer-etale-cohomology} we see that $T_p\Psi(\fX)_{\bar K} = H^0(1)\oplus H^1$. By Tate's theorem \cite[Thm.~4]{Tate67} this implies that $\Psi(\fX)=\Psi^0(\fX) \oplus \Psi^\et(\fX)$ with $T_p \Psi^0(\fX)_{\bar K}= H^0(1)$ and $T_p\Psi^\et(\fX)_{\bar K} = H^1$. It follows that $\fX$ is the base change of the canonical lift of $\fX_k$ to $\cO_K$.

Conversely, assume that $\fX$ is the base change of the canonical lift of $\fX_k$ to $\cO_K$. Let $H^1$ be the image of the direct summand $T_p\Psi^\et(\fX)_{\bar K}$ under the embedding $T_p\Psi(\fX)_{\bar K} \to \rH^2_\et(\fX_{\bar K}, \bZ_p(1))$ of Theorem \ref{thm:formal-brauer-etale-cohomology}. It is a primitive sub-module, and considering Hodge-Tate weights we see that the restriction of the bilinear form on $\rH^2$ to $H^1$ is non-degenerate. Let $U \subset \rH^2_\et(\fX_{\bar K}, \bZ_p(1))$ be its orthogonal complement. Then $U$ is a rank $2$ lattice over $\bZ_p$. The inclusions of $H^1$ and $U$ as mutual orthogonal complements inside the self-dual lattice $\rH^2_\et(\fX_{\bar K}, \bZ_p(1))$ induce an isomorphism
\[
	\alpha \colon U^\vee/U \isomto (H^1)^\vee/H^1
\]
and an identification
\[
	\rH^2_\et(\fX_{\bar K}, \bZ_p(1))   := \big\{ (x,y) \in U^\vee \oplus (H^1)^\vee \mid \alpha(x)=y \big\}.
\]

Consider the unramified $\Gal_K$-module $H^0 := T_p\Psi^\circ(\fX)_{\bar K} (-1)$. We have that $H^0(1)$ is a totally isotropic line in $U$.  We claim that it is  saturated in $U^\vee$. Indeed, if $x\in U^\vee$ satisfies $px \in H^0(1)$ then $(x,\alpha(x))$ defines a $p$-torsion element in the cokernel of $T_p \Psi(\fX)_{\bar K} \to \rH^2_\et(\fX_{\bar K}, \bZ_p(1))$, which must be trivial by Theorem \ref{thm:formal-brauer-etale-cohomology}. By Lemma \ref{lemma:hyperbolic-plane} we conclude that $U=U^\vee$ and that $\rH^2_\et(\fX_{\bar K}, \bZ_p(1)) =U \oplus H^1$. 

Now $U$ is a unimodular $\bZ_p$-lattice of rank $2$ containing an isotropic line. Moreover, since the intersection pairing on $\rH^2_\et(\fX_{\bar K}, \bZ_p(1))$ is even, so is the lattice $U$. It follows that there is a unique isotropic line  $H^2(-1) \subset U$ with $U=H^0(1) \oplus H^2(-1)$ and with $H^0$ and $H^2$ dual unramified representations. We find $\rH^2_\et(\fX_{\bar K}, \bZ_p(1)) =  H^0(1) \oplus H^1 \oplus H^2(-1)$, as claimed.
\end{proof}

\begin{remark}
Using the results on integral  $p$-adic Hodge theory by Bhatt, Morrow, and Scholze \cite{BMS} one can show that the splitting of $\rH^2_\et(\fX_{\bar K},\,\bZ_p)$ as in Theorem \ref{bigthm:etale-characterisation} implies an analogous splitting of the filtered crystal $\rH^2_\crys(\fX/W)$. If $p>2$, then the splitting of $\rH^2_\crys(\fX/W)$ implies that $\fX$ is the canonical lift of $\fX_k$ (see \cite{DeligneIllusie81} and \cite[Lem.~1.11, Thm.~1.12]{Nygaard83}). For $p=2$, however, the splitting of $\rH^2_\crys(\fX/W)$ is a weaker condition than the splitting of $\rH^2_\et(\fX_{\bar K},\, \bZ_p)$, see also \cite[2.1.16.b]{DeligneIllusie81}.
\end{remark}

\section{The functor $X \mapsto (M,F,\cK)$}

Let $\bF_q$ be a finite field with $q=p^a$ elements. Let $W$ be the ring of Witt vectors of $\bF_q$, and $K$ its fraction field. Fix an embedding $\iota\colon \bar K \to \bC$. By \S~\ref{sec:canonical-lifts}, every ordinary K3 surface $X$ over $\bF_q$ has a canonical lift $X_\can$ over $W$. We will denote by $X_\can^\iota$ the complex K3 surface obtained by base changing $X_\can$ along $\iota\colon W\to \bC$.

\subsection{Construction of a pair $(M,F)$}\label{sec:definition-of-F}

Let $X$ be an ordinary K3 surface over $\bF_q$.  The following theorem, due to Nygaard and Yu, says that the Frobenius on $X$ can be lifted to an endomorphism of the Betti cohomology of $X_\can^\iota$, at least after inverting $p$. The proof relies on the Kuga--Satake construction.

\begin{theorem}[Nygaard {\cite[\S~3]{Nygaard83}}, Yu {\cite[Lemma~2.3]{Yu12}}]
\label{theorem:definition-of-F}
There is a unique endomorphism $F$ of $\rH^2(X_\can^\iota,\,\bZ[\tfrac{1}{p}])$ such that
\begin{enumerate}
\item for every $\ell\neq p$ the map $F$ corresponds under the comparison isomorphism
\[
	\rH^2(X_\can^\iota,\,\bZ[\tfrac{1}{p}]) \otimes \bZ_\ell 
	\longisomto \rH^2_\et(X_{\bar \bF_q}, \,\bZ_\ell)
\]
to the geometric Frobenius $\Frob$ on \'etale cohomology,
\item the map $F$ corresponds under the comparison isomorphism
\[
	\rH^2(X_\can^\iota,\,\bZ[\tfrac{1}{p}]) \otimes B_\crys \longisomto 
	\rH^2_\crys(X/W) \otimes_W B_\crys
\]
to the endomorphism $\phi^a \otimes \id$, where $\phi$ denotes the crystalline Frobenius. 
\end{enumerate}
Moreover, $F$ preserves the Hodge structure on $\rH^2(X^\iota_\can,\,\bQ)$. \qed
\end{theorem}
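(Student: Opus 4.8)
The plan is to separate uniqueness from existence. \emph{Uniqueness} is the easy half and needs only (i): for any $\ell\neq p$ the map $\bZ[\tfrac{1}{p}]\to\bZ_\ell$ is injective, so that the comparison isomorphism of (i) realizes $\rH^2(X_\can^\iota,\bZ[\tfrac{1}{p}])$ as a lattice inside $\rH^2(X_\can^\iota,\bZ[\tfrac{1}{p}])\otimes\bZ_\ell=\rH^2_\et(X_{\bar{\bF}_q},\bZ_\ell)$, whence $\End(\rH^2(X_\can^\iota,\bZ[\tfrac{1}{p}]))$ injects into $\End(\rH^2_\et(X_{\bar{\bF}_q},\bZ_\ell))$. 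Condition (i) for a single $\ell$ therefore pins down $F$ as the unique preimage of $\Frob$, if one exists. For \emph{existence} I would reduce the problem to an endomorphism of an abelian variety via Kuga--Satake, where Frobenius becomes a genuine endomorphism and Deligne's theory of canonical lifts applies.

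Fixing a polarization (the canonical lift is projective), the primitive cohomology $\rH^2_{\mathrm{prim}}(X_\can^\iota,\bQ)$ is a polarized Hodge structure of K3 type, and the Kuga--Satake construction attaches to it a complex abelian variety $A$ together with an embedding of Hodge structures $\rH^2_{\mathrm{prim}}(X_\can^\iota,\bQ)(1)\hookrightarrow\End(\rH^1(A,\bQ))$ into weight $0$. The input I would quote from Nygaard and Yu is that this descends to characteristic $p$: the ordinary K3 surface $X$ gives rise to an \emph{ordinary} Kuga--Satake abelian variety $B$ over $\bF_q$ whose canonical lift $B_\can$ recovers $A$ after base change along $\iota$, with the correspondence compatible with all realizations. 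Granting this, the geometric Frobenius of $B$ is a genuine endomorphism $\pi$; since $B$ is ordinary, $\End(B)=\End(B_\can)$ by \cite{Deligne69}, so $\pi$ lifts and acts on $\rH^1(A,\bQ)$ as a morphism of Hodge structures. Conjugation by $\pi$ preserves the image of $\rH^2_{\mathrm{prim}}(X_\can^\iota,\bQ)(1)$; transporting this action through the embedding, rescaling by $q$ to correct the weight, and letting $F$ act by $q$ on the line spanned by the polarization class, I obtain $F$ on $\rH^2(X_\can^\iota,\bQ)$. It preserves $\rH^2(X_\can^\iota,\bZ[\tfrac{1}{p}])$ because $\pi$ acts integrally on each $\rH^1_\et(B_{\bar{\bF}_q},\bZ_\ell)$ with $\ell\neq p$.

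The remaining assertions are formal consequences of functoriality. Condition (i) holds because the $\ell$-adic realization of $\pi$ is the geometric Frobenius on $\rH^1_\et(B_{\bar{\bF}_q},\bZ_\ell)$, which Kuga--Satake carries to $\Frob$ on $\rH^2_\et(X_{\bar{\bF}_q},\bZ_\ell)$; condition (ii) holds because the crystalline realization of the geometric Frobenius endomorphism is the $a$-th power $\phi^a$ of crystalline Frobenius, again transported by the correspondence; and $F$ respects the Hodge decomposition because it is manufactured from an endomorphism of the \emph{complex} abelian variety $A$, which preserves the Hodge structure on $\rH^1(A,\bQ)$. I expect the genuine difficulty to lie not in these checks but in the quoted input: proving that the Kuga--Satake abelian variety exists over the finite field, is ordinary, and has a canonical lift matching $X_\can$ \emph{compatibly with crystalline cohomology}. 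This last compatibility between the Kuga--Satake construction and the Serre--Tate/Nygaard canonical lift is exactly what validates condition (ii), and it is the technical core of \cite{Nygaard83} and \cite{Yu12}.
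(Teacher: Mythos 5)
Your proposal is correct and takes essentially the same route as the paper, which in fact offers no proof of this statement at all: it is quoted from Nygaard and Yu with attribution, the surrounding text noting only that ``the proof relies on the Kuga--Satake construction'' --- precisely the argument (ordinary Kuga--Satake abelian variety $B$ over $\bF_q$, Deligne's $\End(B)=\End(B_\can)$ for canonical lifts, transport of the Frobenius endomorphism $\pi$ through the equivariant embedding, with the crystalline compatibility as the technical core) that you reconstruct, and your uniqueness argument from (i) alone is complete as written. One microscopic repair: the integrality of $F$ on $\rH^2(X_\can^\iota,\bZ[\tfrac{1}{p}])$ is better deduced from condition (i) itself --- $F$ matches $\Frob$, which preserves the lattice $\rH^2_\et(X_{\bar\bF_q},\bZ_\ell)$ for every $\ell\neq p$, and $\bZ[\tfrac{1}{p}]=\bQ\cap\bigcap_{\ell\neq p}\bZ_\ell$ inside $\bQ$ --- rather than from integrality of $\pi$ on $\rH^1_\et(B_{\bar\bF_q},\bZ_\ell)$, since the Kuga--Satake embedding need not carry integral lattices to integral lattices.
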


For later use in the proofs of Theorems \ref{bigthm:properties} and \ref{bigthm:equivalence}, we record some well-known properties of Tate twists of unramified $p$-adic Galois representations.

\begin{lemma} \label{lemma:pure-crys}
Let $V$ be an unramified $\Gal_K$-representation over $\bQ_p$, and let $n$ be an integer. Let $\Frob$ be the geometric Frobenius endomorphism of $V$, relative to $\bF_q$. Consider the $\Gal_K$-module $V(-n) := V\otimes \bQ_p(-n)$. 
\begin{enumerate}
\item The map
\[
	K^\times \to \GL( V(-n) ),\, x \mapsto \Frob^{v(x)} \,\otimes\, q^{nv(x)}\Nm_{K/\bQ_p}(x)^{-n}
\]
factors over the reciprocity map $K^\times \to \Gal_K^\ab$ and induces the action of $\Gal_K$ on $V(-n)$.
\item $D_\crys(q^n \Frob)=\phi^a$ as endomorphisms
of $D_\crys(V(-n))$.
\end{enumerate}
\end{lemma}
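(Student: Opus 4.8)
The statement concerns only unramified representations and Tate twists, so the plan is to treat the two tensor factors of $V(-n)=V\otimes\bQ_p(-n)$ separately and then recombine.

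\emph{Part (i).} Since $V$ is unramified, its $\Gal_K$-action factors through $\Gal(\bar\bF_q/\bF_q)$, and under the geometrically normalized reciprocity map $\rec_K$ (sending a uniformizer to the geometric Frobenius) inertia acts trivially; hence $\rec_K(x)$ acts on $V$ as $\Frob^{v(x)}$, accounting for the first factor. On $\bQ_p(-n)$ the element $\rec_K(x)$ acts by $\chi_{\mathrm{cyc}}(\rec_K(x))^{-n}$, where $\chi_{\mathrm{cyc}}$ is the cyclotomic character, so everything reduces to the identity
\[
	\chi_{\mathrm{cyc}}(\rec_K(x)) = q^{-v(x)}\Nm_{K/\bQ_p}(x),
\]
whose $(-n)$-th power is exactly the factor $q^{nv(x)}\Nm_{K/\bQ_p}(x)^{-n}$. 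To prove it I would combine the norm-compatibility of reciprocity maps, $\chi_{\mathrm{cyc}}(\rec_K(x))=\chi_{\mathrm{cyc}}(\rec_{\bQ_p}(\Nm_{K/\bQ_p}(x)))$ (the cyclotomic character of $K$ being the restriction of that of $\bQ_p$), with the explicit Lubin--Tate formula over $\bQ_p$, which in the geometric normalization reads $\chi_{\mathrm{cyc}}(\rec_{\bQ_p}(y))=|y|_p\,y=p^{-v_p(y)}y$. Because $K/\bQ_p$ is unramified of degree $a$ one has $v_p(\Nm_{K/\bQ_p}(x))=a\,v(x)$ and $p^{a}=q$, and substituting yields the displayed identity.

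\emph{Part (ii).} Here I would use that $D_\crys$ is a tensor functor on crystalline representations, giving $D_\crys(V(-n))=D_\crys(V)\otimes_{K}D_\crys(\bQ_p(-n))$ compatibly with Frobenius (interpreting $q^{n}\Frob$ as $q^{n}(\Frob\otimes\id)$ on $V(-n)$). The factor $\bQ_p(-n)$ is handled by the period $t=\log[\varepsilon]$, which satisfies $\phi(t)=pt$: one finds that $\phi$ acts on $D_\crys(\bQ_p(-n))$ by $p^{n}$, hence $\phi^{a}$ by $p^{an}=q^{n}$. The factor $V$ is the crux: for an unramified representation the geometric Frobenius $\Frob$ is a $\Gal_K$-equivariant endomorphism of $V$, and I would show $D_\crys(\Frob)=\phi^{a}$ on $D_\crys(V)$. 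This is the standard identification of the linearized crystalline Frobenius of an unramified isocrystal with geometric Frobenius; concretely one realizes $D_\crys(V)$ via periods in $\breve K=W(\bar\bF_q)[1/p]\subset B_\crys$, on which $\phi$ restricts to the $p$-power Witt Frobenius while the Galois action is through the $q$-power Witt Frobenius, and tracking a period converts the $\Gal_K$-action into the semilinear $\phi$ and gives $\phi^{a}=D_\crys(\Frob)$ (note $\phi^{a}$ is $K$-linear since $\sigma^{a}=\id$ on $K$). Combining the two factors, $\phi^{a}$ on $D_\crys(V(-n))$ equals $D_\crys(\Frob)\otimes q^{n}\id=D_\crys(q^{n}\Frob)$, as claimed.

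The bookkeeping is routine; the \emph{main obstacle} is pinning down conventions consistently. In (i) this means the geometric versus arithmetic normalization of $\rec_K$ and the corresponding sign in the Lubin--Tate formula, and in (ii) it means the precise form of $D_\crys(\Frob)=\phi^{a}$ for unramified representations, where one must track that $\phi$ is $\sigma$-semilinear whereas $\phi^{a}$ is $K$-linear, and that it is the \emph{geometric} rather than arithmetic Frobenius that appears.
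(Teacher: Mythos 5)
Your proof is correct and takes essentially the same route as the paper: part (i) is exactly the explicit Lubin--Tate computation (the identity $\chi_{\mathrm{cyc}}(\rec_K(x))=q^{-v(x)}\Nm_{K/\bQ_p}(x)$ in the geometric normalization) that the paper delegates to Serre's \emph{Local class field theory}, and part (ii) matches the paper's reduction to $n=0$ via compatibility of $D_\crys$ with Tate twists together with its key observation that $\Frob\otimes 1=1\otimes\phi^a$ on $\big(V\otimes_{\bZ_p}W(\bar\bF_q)\big)^{\Gal_{\bF_q}}$, which is precisely your period computation in $\breve K\subset B_\crys$. Your explicit attention to the normalization conventions and to the semilinearity of $\phi$ versus the $K$-linearity of $\phi^a$ fills in the details the paper leaves implicit.
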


\begin{proof}
The first statement follows from Lubin--Tate theory, see for example \cite[\S~3.1, Theorem~2]{SerreLCFT}. For the second, one uses that the functor $D_\crys$ commutes with Tate twists to reduce to the case $n=0$. In this case, the statement follows from the observation
that $\Frob\otimes 1 = 1 \otimes \phi^a$ as endomorphisms of
\[
	\big( V \otimes_{\bZ_p} W(\bar \bF_q) \big)^{\Gal_{\bF_q}},
\]
where the action of $\Gal_{\bF_q}$ is the diagonal one.
\end{proof}

% bla: alternative argument using (\phi,N)-modules? even better: give reference. The proof now is not very honest.... (i.e., what do we actually know about D_\crys?)

Theorem \ref{bigthm:properties} is now an almost immediate consequence of
Theorem \ref{bigthm:etale-characterisation}.

\begin{proof}[Proof of Theorem \ref{bigthm:properties}]
By its definition (in Theorem \ref{theorem:definition-of-F}), the endomorphism $F$ of $\rH^2(X_\can^\iota,\,\bQ) \otimes \bQ_p = \rH^2_\et(X_{\bar K},\,\bQ_p)$ satisfies $D_\crys(F) = \phi^a$ on $\rH^2_\crys(X/W)[\tfrac{1}{p}]$. 

By Theorem \ref{bigthm:etale-characterisation} we have a decomposition
$\rH^2_\et(X_{\bar K},\bZ_p) = H^0 \oplus H^1(-1) \oplus H^2(-2)$ with $H^i$ unramified and by Lemma \ref{lemma:pure-crys}, also the
endomorphism
\[
	F' := \Frob_{H^0} \,\oplus\, q\Frob_{H^1} \,\oplus\, q^2 \Frob_{H^2}
\]
of $\rH^2_\et(X_{\bar K},\bQ_p)$ satisfies $D_\crys(F') = \phi^a$. Since $D_\crys$ is fully faithful, we must have $F=F'$. But it then follows
immediately that $F$ preserves the $\bZ_p$-lattice $\rH^2_\et(X_{\bar K},\,\bZ_p)$, and that $\rH^2_\et(X_{\bar K},\,\bZ_p)$ decomposes as described in the theorem.
\end{proof}

%\begin{remark}
%For $p\geq 5$, one can also use Fontaine-Laffaille-theory to show that for $p\geq 5$   the endomorphism $F$ preserves the
%integral lattice $\rH^2(X_\can^\iota,\,\bZ)$, see \cite[Prop.~3.2]{Yu12}.
%\end{remark}
%

We thus have constructed from $X/\bF_q$ an integral lattice $M:=\rH^2(X_\can^\iota,\,\bZ)$, equipped with an endomorphism $F$, 
satisfying (M1)--(M4). We end this paragraph by relating the $p$-adic decomposition in (M4) to the Hodge decomposition for $X_\can^\iota$.

\begin{lemma}\label{lemma:hodge-structure}
Let $(M,F)$ be a pair satisfying (M1)--(M4). Then
complex conjugation on $M\otimes \bC$ maps the subspace $M^s \otimes_{\bZ_p,\iota} \bC$ to $M^{2-s} \otimes_{\bZ_p,\iota} \bC$.
\end{lemma}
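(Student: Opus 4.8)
The plan is to exploit two facts: that $F$ is a genuine $\bZ$-linear (hence $\bR$-linear after extension of scalars) endomorphism of $M$, so that $F$ commutes with the complex conjugation $c$ on $M\otimes\bC$ attached to the real form $M\otimes\bR$; and that the decomposition (M4) becomes, after extension along $\iota$, exactly the decomposition of $M\otimes\bC$ into $F$-eigenspaces sorted by the $p$-adic valuation of the eigenvalue. First I would observe that the eigenvalues $\lambda$ of $F$ are algebraic numbers, since the characteristic polynomial of $F$ acting on $M\otimes\bQ$ has rational coefficients; fixing the place of $\bar\bQ$ above $p$ induced by $\iota$ (normalized so that $v(p)=1$), I may speak simultaneously of the archimedean size $|\lambda|$ and of the $p$-adic valuation $v(\lambda)$ of each such $\lambda$.

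Next I would reinterpret (M4) in eigenvalue terms. Since $F$ acts on the rank-one module $M^0$ as a unit, on $M^1$ as $q$ times a $\bZ_p$-automorphism, and on $M^2$ as $q^2$ times a unit, the eigenvalues of $F$ occurring in $M^s\otimes_{\bZ_p,\iota}\bC$ are precisely those $\lambda$ with $v(\lambda)=s\,v(q)=sa$, where $q=p^a$. In other words, writing $M\otimes_\bZ\bC=\bigoplus_\lambda V_\lambda$ for the (honest, by the semisimplicity in (M3)) eigenspace decomposition, one has $M^s\otimes_{\bZ_p,\iota}\bC=\bigoplus_{v(\lambda)=sa}V_\lambda$.

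The core computation then combines (M3) with the Weil-number relation. Because $c$ is antilinear and $cF=Fc$, the identity $Fv=\lambda v$ forces $F(cv)=\bar\lambda\,(cv)$, so $c$ sends $V_\lambda$ into $V_{\bar\lambda}$. By (M3) every eigenvalue satisfies $|\lambda|=q$, whence $\lambda\bar\lambda=|\lambda|^2=q^2$, and therefore $v(\bar\lambda)=v(q^2)-v(\lambda)=2a-sa=(2-s)a$. Thus $c(V_\lambda)=V_{\bar\lambda}\subseteq M^{2-s}\otimes_{\bZ_p,\iota}\bC$ whenever $v(\lambda)=sa$, and summing over all such $\lambda$ yields $c\bigl(M^s\otimes_{\bZ_p,\iota}\bC\bigr)\subseteq M^{2-s}\otimes_{\bZ_p,\iota}\bC$, which is the assertion (the reverse containment, hence equality, follows by applying $c$ again).

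I do not expect a serious obstacle: the content is formal once the bookkeeping is arranged. The one point requiring care is the compatibility of the two worlds, namely ensuring that the $p$-adic valuation cutting out $M^s$ and the archimedean absolute value appearing in (M3) are applied to the \emph{same} algebraic numbers $\lambda$, so that the relation $v(\lambda)+v(\bar\lambda)=v(q^2)$ may legitimately be combined with the valuation count extracted from (M4). Making the place of $\bar\bQ$ determined by $\iota$ explicit at the outset is exactly what makes this transition clean.
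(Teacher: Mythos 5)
Your proof is correct, but it takes a partially different route from the paper's. For the two rank-one pieces the paper runs exactly your valuation--conjugation computation, exploiting rank one to avoid eigenvalue bookkeeping over $\bar\bQ$: since $M^0$ has $\bZ_p$-rank one, $F$ acts on it by a scalar $u\in\bZ_p^\times\subset\bC$, and (M3) gives $\bar u = q^2/u \in \bQ_p$ with $v_p(\bar u)=v_p(q^2)$, which forces the $\bar u$-eigenspace to be $M^2\otimes_{\bZ_p,\iota}\bC$ (and symmetrically $M^2$ maps to $M^0$). For the middle piece, however, the paper argues quite differently: it invokes (M2) to identify $M^1$ as the orthogonal complement of $M^0\oplus M^2$ (indeed $q^{s+t}\langle x,y\rangle = q^2\langle x,y\rangle$ on the summands forces $\langle M^s,M^t\rangle=0$ unless $s+t=2$), and since the pairing is defined over $\bZ$, hence real, conjugation preserves this complement once the outer pieces are known to be swapped. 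Your uniform treatment of all three summands via the full eigenspace decomposition $M\otimes\bC=\bigoplus_\lambda V_\lambda$ with $M^s\otimes_{\bZ_p,\iota}\bC=\bigoplus_{v(\lambda)=sa}V_\lambda$ is thus a genuinely different argument for $M^1$: it never uses (M2), and it would work verbatim for summands of arbitrary rank (it does not even truly need semisimplicity, as generalized eigenspaces would serve, though (M3) supplies it). The one caveat is the point you flag yourself: an embedding $\iota\colon W(\bF_q)\to\bC$ does not by itself single out a place of $\bar\bQ$ above $p$; you need $\iota$ extended to $\bar K\to\bC$ (which is precisely the convention the paper fixes at the start of its Section 3, and in any case an arbitrary extension works, with the conclusion independent of the choice, since the valuations $0,a,2a$ separating the $M^s$ are intrinsic). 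So this is a one-line repair at worst; the paper's rank-one shortcut plus the orthogonality trick is exactly what lets it dodge that choice, since the only eigenvalues it ever names already lie in $\bQ_p$.
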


In other words: the decomposition in (M4) induces under $\iota\colon \bZ_p\to \bC$ a $\bZ$-Hodge structure on $M$.

\begin{proof}[Proof of Lemma \ref{lemma:hodge-structure}]
By (M4), the one-dimensional subspace $M^0 \otimes_{\bZ_p,\iota} \bC$ of $M\otimes \bC$ is the unique eigenspace for the endomorphism $F$ corresponding to an eigenvalue $u \in \bQ_p \subset \bC$ with $v_p(u)=0$. By (M3) we have $u \bar u = q^2$, and hence 
also the eigenvalue $\bar u = q^2/u$ lies in $\bQ_p \subset \bC$. Since $v_p(\bar u)=v_p(q^2)$, we see that the corresponding eigenspace is $M^2 \otimes_{\bZ_p,\iota} \bC$. Similarly, complex conjugation maps $M^2$ to $M^0$. By (M2), the subspace $M^{1}$ is the orthogonal complement of $M^0\oplus M^2$, and hence is preserved by complex conjugation.
\end{proof}

\begin{proposition}\label{prop:F-Hodge}
Let $(M,F)$ be the pair associated to an ordinary K3 surface $X$ over $\bF_q$. Then
we have $M^{s} \otimes_{\bZ_p,\iota} \bC = \rH^{s,2-s}(X_\can^\iota)$ as subspaces of $M\otimes \bC = \rH^2(X_\can^\iota,\,\bC)$.
\end{proposition}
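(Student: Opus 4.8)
The plan is to identify the Hodge piece $\rH^{s,2-s}(X_\can^\iota)$ with the eigenspace $M^s\otimes_{\bZ_p,\iota}\bC$ by matching them against the crystalline slope decomposition, using the fact that the canonical lift is defined precisely to split the enlarged formal Brauer group. First I would recall that for the canonical lift the crystalline Frobenius $\phi^a$ acts on the three-step slope filtration of $\rH^2_\crys(X/W)[\tfrac 1p]$ with slopes $0$, $a$, $2a$ (equivalently, eigenvalues of $p$-adic valuation $0$, $a$, $2a$), and that these slope subspaces are \emph{split} compatibly with the Hodge filtration because the $p$-divisible group $\Psi(X_\can)$ splits as $\Psi^\circ\oplus\Psi^\et$ (together with the dual piece). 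Concretely, by Theorem \ref{bigthm:etale-characterisation} applied to $\fX=X_\can$ over $W$ we have $\rH^2_\et(X_{\bar K},\bZ_p)=H^0\oplus H^1(-1)\oplus H^2(-2)$ with the $H^i$ unramified, and in the proof of Theorem \ref{bigthm:properties} it is shown that $F$ acts on these three summands as $\Frob_{H^0}$, $q\Frob_{H^1}$, $q^2\Frob_{H^2}$. By definition $M^s\otimes\bZ_p$ is the summand on which $F=q^s\cdot(\text{unit-valuation Frobenius})$, i.e. $M^0\otimes\bZ_p=H^0$, $M^1\otimes\bZ_p=H^1(-1)$, $M^2\otimes\bZ_p=H^2(-2)$.

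Next I would translate this $p$-adic picture across $\iota$ into the complex Hodge decomposition. The key input is the crystalline–Betti comparison of Theorem \ref{theorem:definition-of-F}(ii), under which $F$ corresponds to $\phi^a\otimes\id$; combined with Lemma \ref{lemma:pure-crys}(ii) this shows $D_\crys(F)=\phi^a$, so the eigenspace decomposition of $F$ on $M\otimes\bC$ matches the slope decomposition of $\phi^a$ on $\rH^2_\crys(X/W)[\tfrac 1p]$. The crucial geometric fact is that for the canonical lift the Hodge filtration and the slope filtration are \emph{in opposition} (the Hodge-to-slope comparison degenerates): the canonical lift is characterized by the splitting of $\rH^2_\crys$ compatibly with its Hodge filtration (this is the content of the Nygaard/Deligne–Illusie characterization recalled in the Remark after Theorem \ref{bigthm:etale-characterisation}). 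Since the slope-$2s$ part of $\rH^2_\crys$ corresponds under $\iota$ to the eigenspace of $\phi^a$ with valuation $2s$, which is exactly $M^s\otimes_{\bZ_p,\iota}\bC$ by the previous paragraph, and since this slope subspace is identified with $\rH^{s,2-s}$ under the splitting for the canonical lift, the two decompositions coincide.

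Alternatively—and this is perhaps the cleaner route—I would argue using eigenvalue valuations directly. Pick the eigenvalue $u$ of $F$ on $M^0\otimes_{\bZ_p,\iota}\bC$ with $v_p(u)=0$; as in the proof of Lemma \ref{lemma:hodge-structure}, this one-dimensional eigenspace is canonically determined. On the Betti side, the holomorphic $2$-form spans $\rH^{2,0}(X_\can^\iota)$, and since $F$ preserves the Hodge structure (Theorem \ref{theorem:definition-of-F}) it acts on the line $\rH^{2,0}$ by a scalar; the point is to show this scalar has $p$-adic valuation $0$, which forces $\rH^{2,0}=M^0\otimes_{\bZ_p,\iota}\bC$. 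This valuation-zero statement is exactly the ordinarity/canonical-lift condition: the action of $\phi^a$ on the top Hodge piece $\rH^0(X,\Omega^2)\cong\rH^2(X,\cO_X)^\vee$ is by a unit precisely because Frobenius is bijective on $\rH^2(\cO_X)$ (condition (ii) in the definition of ordinary) and the canonical lift propagates this splitting to the crystalline cohomology. Then $M^2\otimes_{\bZ_p,\iota}\bC=\rH^{0,2}$ follows by complex conjugation (Lemma \ref{lemma:hodge-structure}), and $M^1\otimes_{\bZ_p,\iota}\bC=\rH^{1,1}$ by taking orthogonal complements, using that $F$ respects the pairing via (M2).

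The main obstacle I anticipate is making rigorous the compatibility between the Hodge filtration of the \emph{complex} K3 surface $X_\can^\iota$ and the slope filtration of the \emph{crystalline} cohomology—these live on different sides of the comparison isomorphism, and one must ensure that the canonical-lift splitting (a statement about $p$-divisible groups and filtered $\phi$-modules over $W$) genuinely transports to a statement about the real Hodge decomposition after applying $\iota$. The honest content is that the canonical lift is defined so that $\rH^2_\crys(X_\can/W)$ splits as a filtered $F$-isocrystal into pieces of pure slope $0,a,2a$ that are simultaneously Hodge-graded; invoking the Deligne–Illusie/Nygaard description of the canonical lift (cited in the Remark) gives exactly this, and the valuation-zero computation for $\rH^{2,0}$ is where ordinarity is used essentially rather than formally.
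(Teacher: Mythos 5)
Your second (``cleaner'') route is not a repair of the first --- it is incorrect, and in fact it proves the opposite indexing to the one in the statement. The proposition asserts $\rH^{2,0}(X_\can^\iota) = M^2 \otimes_{\bZ_p,\iota}\bC$, whereas you claim that $F$ acts on $\rH^{2,0}$ by a scalar of valuation $0$ and conclude $\rH^{2,0}=M^0\otimes_{\bZ_p,\iota}\bC$. The scalar actually has valuation $2a$ (it is $q^2/u$, with $u$ the unit eigenvalue): ordinarity makes Frobenius bijective on $\rH^2(X,\cO_X)$, but $\rH^2(X,\cO_X)$ is the graded \emph{quotient} $\gr^0_{\Fil}$, i.e.\ the $(0,2)$-direction, not the subspace $\Fil^2=\rH^0(X,\Omega^2)$; by Mazur's divisibility theorem $\phi(\Fil^2)\subset p^2\rH^2_\crys$, so the slope on the $(2,0)$-line is $2$, not $0$. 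Your justification via Serre duality $\rH^0(\Omega^2)\cong\rH^2(\cO_X)^\vee$ silently drops the Frobenius twist: the duality pairing lands in $\rH^4$, where $\phi^a$ acts by $q^2$, so a unit eigenvalue on $\rH^2(\cO_X)$ dualizes to $q^2\cdot(\mathrm{unit})$ on $\rH^0(\Omega^2)$. The unit-root eigenspace is therefore $\rH^{0,2}=M^0\otimes\bC$, which is what the proposition (and Lemma~\ref{lemma:hodge-structure}) says.

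Your first route has the correct indexing and is morally close to the truth, but the step you yourself flag as ``the main obstacle'' --- transporting the canonical-lift splitting of the filtered isocrystal across the comparison isomorphisms into a statement about the Hodge decomposition of the complex surface $X_\can^\iota$ --- is the entire content of the proposition, and you leave it unestablished. The paper sidesteps the crystalline side altogether: by Theorem~\ref{bigthm:etale-characterisation} each $M^s(s)$ is unramified, hence $M^s\otimes\bC_p\cong\bC_p(-s)^{\oplus}$ as $\Gal_K$-modules, so the Hodge--Tate comparison identifies $\Fil^i\rH^2_\dR(X_{\can,K}/K)\otimes_K\bC_p$ with $\oplus_{s\geq i}M^s\otimes\bC_p$ inside $M\otimes\bC_p$; extending $\iota$ to an embedding $\bC_p\to\bC$ and using the de Rham--Betti comparison then shows that the Hodge \emph{filtration} on $\rH^2(X_\can^\iota,\bC)$ equals $\Fil^i=\oplus_{s\geq i}M^s\otimes\bC$. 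Note that any comparison argument can only give the filtration, never the splitting directly; the paper then deduces the decomposition from Lemma~\ref{lemma:hodge-structure} (complex conjugation swaps $M^s$ and $M^{2-s}$), via $\rH^{s,2-s}=\Fil^s\cap\overline{\Fil^{2-s}}=M^s\otimes\bC$ --- a final step your first route asserts (``simultaneously Hodge-graded'') but never actually carries across to $\bC$. If you insist on the crystalline route through Deligne--Illusie/Nygaard, you must additionally prove the compatibility of the $B_\crys$-comparison with the Betti--\'etale and Betti--de Rham identifications under $\iota$; once you invoke that compatibility you are in effect running the paper's Hodge--Tate argument, so nothing is gained.
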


\begin{proof}Indeed, under the `Hodge--Tate' comparison isomorphism
\[
	 \rH^2_\dR(X_{\can,K}/K) \otimes_K \bC_p
	 \longisomto \rH^2_\et(X_{\bar K},\,\bQ_p) \otimes_{\bQ_p} \bC_p
	 \cong M \otimes_{\bZ_p} \bC_p 
\]
the subspace $(\Fil^i \rH^2_\dR(X_{\can,K}/K)) \otimes_K \bC_p$ is mapped to $\oplus_{s \geq i} M^s \otimes_{\bZ_p} \bC_p$. Extending $\iota$ to an embedding $\bC_p \to \bC$  we see that the Hodge filtration on $\rH^2(X_\can^\iota,\, \bC)$ agrees with the filtration on $M\otimes \bC$ induced by the decomposition on $M\otimes \bZ_p$, and hence by Lemma \ref{lemma:hodge-structure} we have $M^s \otimes_{\bZ_p} \bC = \rH^{s,2-s}(X_\can^\iota)$.
\end{proof}

\subsection{Line bundles and ample cone}\label{sec:line-bundles}

Let $X$ be an ordinary K3 surface over $\bF_q$. Recall from \S~\ref{sec:canonical-lifts} that line bundles on $X$ extend uniquely to $X_\can$.
We obtain isomorphisms
\[
	\Pic X \isomto \Pic X_{\can} \isomto \Pic X_{\can,K}. 
\] 
Let $K^\nr$ be the maximal unramified extension of $K$. 

\begin{proposition}\label{prop:ample-classes}
We have natural isomorphisms
\[
	\Pic X_{\bar\bF_q} \isomto \Pic X_{\can,K^\nr} \isomto \Pic X_{\can,\bar K},
\]
and a class $\lambda \in \Pic X_{\bar\bF_q}$ is ample if and only if its image in $\Pic X_{\can,\bar K}$ is ample.
\end{proposition}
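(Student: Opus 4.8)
The plan is to deduce everything from three inputs: the functoriality of Nygaard's canonical lift under unramified base change, the extension-and-specialization isomorphism $\Pic X \isomto \Pic X_\can \isomto \Pic X_{\can,K}$ recalled above (which rests on the special fiber being a principal divisor together with Proposition~\ref{prop:line-bundles-lift}), and the smooth proper base change theorem. I would prove the first isomorphism level by level. For a finite extension $\bF_{q'}/\bF_q$, write $W'=W(\bF_{q'})$ and $L=\Frac W'$, a finite unramified extension of $K$. Since $W'/W$ is finite \'etale, the enlarged formal Brauer group commutes with the base change $W\to W'$, so the split extension \eqref{eq:lift-etale-local} over $W$ stays split over $W'$; by the uniqueness of the canonical lift this identifies $(X_\can)_{W'}$ with the canonical lift of $X_{\bF_{q'}}$. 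Proposition~\ref{prop:line-bundles-lift} and specialization then give $\Pic X_{\bF_{q'}}\isomto \Pic X_{\can,L}$, naturally in $\bF_{q'}$. Passing to the colimit over all finite $\bF_{q'}$ --- using that line bundles are finitely presented, so that $\Pic X_{\can,K^\nr}=\colim_L \Pic X_{\can,L}$ and $\Pic X_{\bar\bF_q}=\colim \Pic X_{\bF_{q'}}$ --- yields the natural isomorphism $\Pic X_{\bar\bF_q}\isomto \Pic X_{\can,K^\nr}$.

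For the second isomorphism the map $\Pic X_{\can,K^\nr}\to \Pic X_{\can,\bar K}$ is injective (base change along a field extension), so it suffices to show that every line bundle on $X_{\can,\bar K}$ descends to $K^\nr$; concretely, that the inertia group $I_K=\Gal(\bar K/K^\nr)$ acts trivially on $\Pic X_{\can,\bar K}$ and that the ensuing descent obstruction vanishes. Triviality of the inertia action I would obtain from good reduction: because $X_\can$ is smooth and proper over $W$, smooth proper base change shows that $\rH^2_\et(X_{\can,\bar K},\bZ_\ell(1))$ is unramified for $\ell\neq p$; the cycle class map $\Pic X_{\can,\bar K}\to \rH^2_\et(X_{\can,\bar K},\bZ_\ell(1))$ is injective (the N\'eron--Severi group of a K3 surface is finitely generated and torsion-free) and $\Gal_K$-equivariant, so $I_K$ must act trivially on $\Pic X_{\can,\bar K}$. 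For the obstruction, the low-degree terms of the Hochschild--Serre spectral sequence give
\[
	0 \longto \Pic X_{\can,K^\nr} \longto (\Pic X_{\can,\bar K})^{I_K} \longto \Br K^\nr ,
\]
and I would annihilate the last map either by exhibiting a $K^\nr$-rational point on $X_{\can,K^\nr}$ (lift a point of $X_{\bar\bF_q}$ through a henselian $W(\bF_{q''})$, which already produces a point over a finite unramified $L''\subset K^\nr$), or by the computation $\Br K^\nr=\colim_L \Br L=\colim(\bQ/\bZ\xrightarrow{[L':L]}\bQ/\bZ)=0$. Combined with the triviality of the inertia action this gives $\Pic X_{\can,K^\nr}\isomto \Pic X_{\can,\bar K}$.

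The ampleness criterion then follows from the fibre-wise ampleness criterion for projective morphisms over a local base (EGA III, 4.7.1). Given $\lambda\in\Pic X_{\bar\bF_q}$, choose $\bF_{q'}$ over which it is defined and let $\cL$ be its unique extension to the projective canonical lift $(X_\can)_{W'}$. Since $W'$ is local with closed point carrying the special fiber $X_{\bF_{q'}}$, the bundle $\cL$ is relatively ample if and only if $\lambda$ is ample on $X_{\bF_{q'}}$, and relative ampleness is equivalent to ampleness of the restriction $\cL_L$ to the generic fibre $X_{\can,L}$. As ampleness is insensitive to extension of the base field, $\lambda$ is ample on $X_{\bar\bF_q}$ iff it is ample on $X_{\bF_{q'}}$, while $\cL_L$ is ample iff its image in $\Pic X_{\can,\bar K}$ is ample; chaining these equivalences gives the stated criterion.

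I expect the main obstacle to be the second isomorphism. The descent of line bundles from $\bar K$ to $K^\nr$ requires combining two facts that are easy to state but easy to conflate: that inertia acts trivially (a geometric input, from good reduction and the cycle class map) and that the resulting descent obstruction in $\Br K^\nr$ actually vanishes. The latter step --- whether argued via a rational point or via $\Br K^\nr=0$ --- is the one most likely to be silently omitted, and I would take care to justify it explicitly.
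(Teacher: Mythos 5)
Your treatment of the two isomorphisms is sound and matches the paper's (terser) argument: the first isomorphism is exactly the compatibility of canonical lifts with finite unramified extensions plus a colimit, and the second is the triviality of the inertia action (via the injective, Galois-equivariant cycle class map into the unramified module $\rH^2_\et(X_{\can,\bar K},\bZ_\ell(1))$) combined with the vanishing of $\Br K^\nr$ in the Hochschild--Serre sequence --- both points the paper invokes in one line and you correctly flesh out. The genuine gap is in the ampleness criterion, where you chain the equivalences ``$\lambda$ ample on $X_{\bF_{q'}}$ $\Leftrightarrow$ $\cL$ relatively ample $\Leftrightarrow$ $\cL_L$ ample on the generic fibre.'' The first equivalence is correct (EGA III, 4.7.1: over a local base, relative ampleness of a line bundle on a proper scheme is equivalent to ampleness on the \emph{closed} fibre), but the second is an equivalence only in one direction: relative ampleness implies ampleness on every fibre, while ampleness on the generic fibre does \emph{not} in general imply relative ampleness --- that would again be ampleness on the closed fibre, which is precisely what is at stake. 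So your argument proves ``$\lambda$ ample $\Rightarrow$ image ample'' but silently assumes the converse, which is the hard direction of the proposition and is false for proper families in general (a bundle can be ample on the generic fibre yet pair non-positively with a curve in the special fibre).

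The paper closes exactly this direction with K3-specific input that no fibrewise ampleness theorem supplies: by the structure of the ample cone of a K3 surface over an algebraically closed field (\cite[\S~8.1]{HuybrechtsK3}), a line bundle $L$ is ample if and only if $L^2>0$ and $L\cdot D\neq 0$ with $\sgn(L\cdot D)=\sgn(H\cdot D)$ for every class $D$ with $D^2=-2$, where $H$ is a fixed ample class. Since the bijection $\Pic X_{\bar\bF_q}\isomto \Pic X_{\can,\bar K}$ is an isometry, the $(-2)$-classes on the two sides correspond, and the canonical lift $H_\can$ of an ample $H$ on $X$ is itself ample --- this last fact is exactly what your EGA argument does prove (closed fibre $\Rightarrow$ generic fibre) --- so the numerical criteria match up on both sides and the bijection identifies the ample cones in both directions. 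To repair your proof, replace the second ``equivalence'' by this cone comparison; your one valid direction then serves precisely to verify the hypothesis that the fixed ample class lifts to an ample class.
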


\begin{proof}
The first isomorphism follows from  the fact that 
canonical lifts commute with finite unramified extensions. The second isomorphism follows from the triviality of the action of $\Gal(K/K^\nr)$ on $\Pic X_{\can,\bar K} \subset \rH^2_\et(X_{\can,\bar K},\bQ_\ell(1))$  and the vanishing of $\Br K^\nr$.

It remains to show that the isomorphism $\Pic X_{\bar \bF_q} \isomto \Pic X_{\can,\bar K}$ restricts to a bijection between the subsets of ample classes. Fix an ample line bundle $H$ on $X$. Then by the structure theorem on the ample cone of a K3 surface over an algebraically closed field (\cite[\S~8.1]{HuybrechtsK3}) we have that a line bundle $L$ on $X_{\bar \bF_q}$ is ample if and only if
\begin{enumerate}
\item $L^2>0$
\item for every $D \in \Pic X_{\bar \bF_q}$ with $D^2=-2$ we have $L\cdot D \neq 0$ and $L\cdot D$ has the same sign as $H\cdot D$
\end{enumerate}
and similarly for line bundles on $X_{\can,\bar K}$. 
But the bijection  $\Pic X_{\bar \bF_q} \isomto \Pic X_{\can,\bar K}$ is an isometry, and the canonical lift $H_\can$ of the ample line bundle $H$ is itself ample, so we conclude that the bijection preserves ample classes.
\end{proof}

\begin{proposition}\label{prop:F-Pic}
For every $d\geq 1$ the map
\[
	\Pic X_{\bF_{\!q^d}} \to \{ \lambda \in \rH^2(X_\can^\iota,\,\bZ) \mid F^d\lambda=q^d\lambda  \}
\]
is an isomorphism.
\end{proposition}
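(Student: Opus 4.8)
The plan is to realize the map as the composite of the cycle class with the chain of isomorphisms already established, and then to match the two defining conditions ($\sigma^d$-invariance of line bundles versus the $F^d=q^d$ eigenvalue relation) through the $\ell$-adic comparison. Concretely, combining $\Pic X \isomto \Pic X_\can \isomto \Pic X_{\can,K}$ with Proposition~\ref{prop:ample-classes} and the equality $\Pic X_{\can,\bar K}=\NS(X_\can^\iota)$ (valid because $\Pic^0$ of a K3 vanishes and $\Pic$ is unchanged under extension of algebraically closed fields), a line bundle defined over $\bF_{q^d}$ produces a cycle class $\lambda\in M=\rH^2(X_\can^\iota,\bZ)$. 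First I would check that $\lambda$ lands in the target, i.e. $F^d\lambda=q^d\lambda$. For this, pass to $\ell$-adic realizations for some $\ell\neq p$: under the comparison and smooth-proper base-change isomorphisms $M\otimes\bZ_\ell\isomto\rH^2_\et(X_{\can,\bar K},\bZ_\ell)\isomto\rH^2_\et(X_{\bar\bF_q},\bZ_\ell)$, the Betti cycle class of $L$ corresponds to its $\ell$-adic cycle class in $\rH^2_\et(X_{\bar\bF_q},\bZ_\ell(1))$ after fixing a generator of $\bZ_\ell(1)$, while $F$ acts as the geometric Frobenius $\Frob$ by Theorem~\ref{theorem:definition-of-F}(i). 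Being defined over $\bF_{q^d}$ means this class is fixed by the $d$-th power of the arithmetic Frobenius $\sigma=\Frob^{-1}$ in the \emph{twisted} cohomology; since $\sigma$ scales $\bZ_\ell(1)$ by $q$, this invariance becomes $(q\Frob^{-1})^d\lambda=\lambda$, that is $F^d\lambda=q^d\lambda$. Hence the map is well defined.

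Injectivity is then immediate: the cycle class is injective on the torsion-free group $\NS$ of a K3 surface, and $\Pic X_{\bF_{q^d}}\to\Pic X_{\bar\bF_q}$ is injective, so the composite is injective.

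For surjectivity, take $\lambda\in M$ with $F^d\lambda=q^d\lambda$; the first step is to show $\lambda$ is a $(1,1)$-class. Since $F$ preserves the Hodge decomposition and, by Proposition~\ref{prop:F-Hodge}, $M^s\otimes_{\bZ_p,\iota}\bC=\rH^{s,2-s}(X_\can^\iota)$, I would decompose $\lambda$ along the three Hodge pieces. On $M^0$ the endomorphism $F$ acts as a $p$-adic unit (as $FM^0=M^0$) and on $M^2$ as $q^2$ times a unit (as $FM^2=q^2M^2$), so via $\iota$ the eigenvalues of $F^d$ on $\rH^{0,2}$ and $\rH^{2,0}$ have $p$-adic valuations $0$ and $2\,v_p(q^d)$, neither of which equals $v_p(q^d)$. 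Therefore the $\rH^{0,2}$- and $\rH^{2,0}$-components of $\lambda$ vanish and $\lambda\in\rH^{1,1}\cap M$. By the Lefschetz $(1,1)$ theorem $\lambda=c_1(\mathcal L)$ for a line bundle on $X_\can^\iota$, which, as above, descends to some $L\in\Pic X_{\bar\bF_q}$.

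It remains to check that $L$ is defined over $\bF_{q^d}$. Reversing the $\ell$-adic bookkeeping, the relation $F^d\lambda=q^d\lambda$ translates into $\sigma^d$-invariance of the $\ell$-adic cycle class of $L$ in $\rH^2_\et(X_{\bar\bF_q},\bZ_\ell(1))$; since this cycle class is injective and Galois-equivariant, $L$ is itself fixed by $\sigma^d$, hence by $\Gal(\bar\bF_q/\bF_{q^d})$. As $\Br\bF_{q^d}=0$, Galois descent yields $L\in\Pic X_{\bF_{q^d}}$ mapping to $\lambda$, completing surjectivity. The hard part is precisely the comparison of the two Frobenius actions---reconciling $F$, which is the geometric Frobenius on untwisted cohomology, with the arithmetic Frobenius governing descent of line bundles, all the while tracking the single Tate twist on which arithmetic Frobenius scales by $q$. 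Everything else reduces to Lefschetz $(1,1)$, the valuation count above, and Galois descent over a finite field.
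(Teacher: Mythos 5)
Your proof is correct and follows essentially the same route as the paper's: the valuation count on the decomposition in (M4) together with Proposition~\ref{prop:F-Hodge} to see that $\lambda$ is a $(1,1)$-class, the $\ell$-adic Frobenius/Tate-twist translation coming from Theorem~\ref{theorem:definition-of-F}, and Galois descent via Proposition~\ref{prop:ample-classes} and the vanishing of $\Br \bF_{q^d}$. The only differences are cosmetic: you spell out well-definedness and the twist bookkeeping explicitly and treat general $d$ directly, whereas the paper reduces without loss of generality to $d=1$.
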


\begin{proof}
Injectivity is clear, it suffices to show that the map is surjective. Without loss of generality we may assume that $d=1$. 

By Proposition \ref{prop:F-Hodge} any $\lambda \in \rH^2(X_\can^\iota,\,\bZ)$  satisfying $F\lambda = q\lambda$ is a Hodge class and by Theorem \ref{theorem:definition-of-F} we see that $\lambda$ defines a $\Gal_K$-invariant element of $\Pic X_{\can,\bar K}$. By the previous proposition, $\lambda$ corresponds to a  $\Gal_{\bF_q}$-invariant class in $\Pic X_{\bar\bF_q}$, which defines a line bundle $\cL$ on $X$ since the Brauer group of $\bF_q$ vanishes. We conclude that the map is surjective as claimed.
\end{proof}

\begin{proposition}
The real cone $\cK \subset M\otimes_\bZ R$ spanned by the classes of ample line bundles on $\Pic X_{\bar\bF_q}$ satisfies (M5). 
\end{proposition}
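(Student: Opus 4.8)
The plan is to transport the statement to the geometric Néron--Severi group of $X_{\bar\bF_q}$, apply the structure theorem on ample cones there, and then verify $F$-invariance by recognizing $q^{-1}F$ as a genuine geometric Frobenius. First I would identify $\NS(M,F)$ with $\Pic X_{\bar\bF_q}$. Proposition \ref{prop:F-Pic} gives, for each $d\geq 1$, an isomorphism $\Pic X_{\bF_{q^d}} \isomto \{\lambda\in M : F^d\lambda = q^d\lambda\}$. Passing to the colimit over $d$, and using that every line bundle on $X_{\bar\bF_q}$ is defined over some finite extension $\bF_{q^d}$, I obtain an isomorphism $\Pic X_{\bar\bF_q}\isomto \NS(M,F)$. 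Since the cycle class map respects the intersection form this is an isometry, and by the definition of $\cK$ it carries the ample cone of $X_{\bar\bF_q}$ onto $\cK$.

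Next I would invoke the structure theorem on the ample cone of a K3 surface over an algebraically closed field (\cite[\S~8.1]{HuybrechtsK3}), exactly as in the proof of Proposition \ref{prop:ample-classes}. It identifies the ample cone as one connected component (a chamber) of
\[
	\{\, x \in \NS(X_{\bar\bF_q})\otimes\bR \mid x^2>0,\ x\cdot\delta \neq 0 \text{ for all } \delta \text{ with } \delta^2=-2\,\}.
\]
Transporting this along the isometry of the previous step shows that $\cK$ is a connected component of the set appearing in (M5), which is the first half of the claim.

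It remains to check $F\cK=\cK$. The key observation is that, under the identification $\NS(M,F)=\Pic X_{\bar\bF_q}$ together with the comparison isomorphism $M\otimes\bZ_\ell \cong \rH^2_\et(X_{\bar\bF_q},\bZ_\ell)$ of Theorem \ref{theorem:definition-of-F}, the operator $q^{-1}F$ corresponds to the action of the geometric Frobenius on $\Pic X_{\bar\bF_q}$: indeed $F$ matches $\Frob$ on $\rH^2_\et$, while the cycle class map lands in the Tate twist $\rH^2_\et(X_{\bar\bF_q},\bZ_\ell(1))$, on which geometric Frobenius acts with the extra scalar $q^{-1}$. In particular $q^{-1}F$ is an integral automorphism of the lattice $\NS(M,F)$. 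Since $X$ is defined over $\bF_q$, the geometric Frobenius permutes the ample line bundles on $X_{\bar\bF_q}$, so $q^{-1}F$ preserves $\cK$; as $\cK$ is a cone, $F\cK = q\cdot(q^{-1}F)\cK = q\cK=\cK$.

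The main obstacle is precisely this last step. One must be careful that $F$ itself does \emph{not} preserve the set in (M5) on the nose: by (M2) it scales the form by $q^2$, sending a class $\delta$ with $\delta^2=-2$ to a class of square $-2q^2$ and the wall $\delta^\perp$ to $(q^{-1}F\delta)^\perp$. It is only the normalized operator $q^{-1}F$ --- equal to the genuine geometric Frobenius on $\Pic X_{\bar\bF_q}$, hence an integral isometry that permutes the $(-2)$-classes and preserves ampleness --- that manifestly stabilizes the chamber $\cK$. Alternatively, one can sidestep computing the Tate twist: $q^{-1}F$ is an isometry by (M2), it fixes the class of an ample line bundle $H\in\Pic X_{\bF_q}$ (which exists since $X$ is projective, and which satisfies $FH=qH$), and an isometry preserving the chamber decomposition and fixing an interior point of $\cK$ must fix $\cK$.
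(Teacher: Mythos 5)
Your proof is correct and takes essentially the same route as the paper, whose one-line argument likewise combines Proposition~\ref{prop:ample-classes}, Proposition~\ref{prop:F-Pic}, and the structure theorem for the ample cone of a K3 surface; your explicit point that only the normalized operator $q^{-1}F$ (the geometric Frobenius on $\Pic X_{\bar\bF_q}$, once the Tate twist is accounted for) stabilizes the chamber $\cK$, while $F$ itself merely scales the form by $q^2$, is exactly the detail the paper leaves implicit, and since $\cK$ is a cone this yields $F\cK=\cK$ as required.
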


\begin{proof}
This follows immediately from the Propositions \ref{prop:ample-classes} and \ref{prop:F-Pic}, and the structure of the ample cone of a complex K3 surface.
\end{proof}

\subsection{Fully faithfulness}\label{sec:fully-faithful}
In \S~\ref{sec:definition-of-F} and \S~\ref{sec:line-bundles} we have constructed a functor $X \mapsto (M,F,\cK)$ from ordinary K3 surfaces over $\bF_q$ to triples satisfying (M1)--(M5). We end this section by showing that this functor is fully faithful.

\begin{proof}[Proof of fully faithfulness in Theorem~\ref{bigthm:equivalence}]
This is shown in \cite{Nygaard83b} and \cite[Theorem~3.3]{Yu12} for K3 surfaces equipped with an ample line bundle. The same argument works here, we repeat it for the convenience of the reader.

\medskip\noindent\emph{Faithfulness}.
Assume that $f,g\colon X_1 \to X_2$ are morphisms between ordinary K3 surfaces inducing the same maps $\rH^2(X_{2,\can}^\iota,\,\bZ) \to \rH^2(X_{1,\can}^\iota,\,\bZ)$. Then $f_\can^\iota = g_\can^\iota$ as maps from $X_{1,\can}^\iota$ to $X_{2,\can}^\iota$ and therefore $f_\can=g_\can$ and $f=g$. 

\medskip\noindent\emph{Fullness}.
Let $X_1$ and $X_2$ be ordinary K3 surfaces over $\bF_q$. Let 
\[
	\varphi \colon \rH^2(X_{2,\can}^\iota,\,\bZ) \to \rH^2(X_{1,\can}^\iota,\,\bZ)
\]
be an isometry commuting with $F$ and respecting ample cones. By the description of the ample cones of $X_1$ and $X_2$, we may choose ample line bundles $\cL_1$ and $\cL_2$ on $X/\bF_q$ such that $\varphi$ maps $c_1(\cL_{2,\can}^\iota)$ to $c_1(\cL_{1,\can}^\iota)$.

By Proposition~\ref{prop:F-Hodge} the map $\varphi$ respects the Hodge structures, and therefore the Torelli theorem shows that there is a unique isomorphism $f\colon X_{1,\can}^\iota \isomto X_{2,\can}^\iota$ with $f^\ast = \varphi$. Since $f^\ast F_2 = F_1 f^\ast$, and since
the \'etale cohomology of the $X_{i,\can,\bar K}$ is unramified, we have that
\[
	f^\ast\colon \rH^2_\et(X_{2,\can,\bar K},\,\bQ_\ell) \to \rH^2_\et(X_{1,\can,\bar K},\,\bQ_\ell)
\]
is $\Gal_K$-equivariant, and hence $f$ descends to a morphism of polarized K3 surfaces over $K$.  By Matsusaka--Mumford \cite[Thm.~2]{MatsusakaMumford64} this extends to an isomorphism $f\colon X_{1,\can} \isomto X_{2,\can}$ and we conclude that $\varphi$ comes from an isomorphism $f\colon X_1 \isomto X_2$ over $\bF_q$.
\end{proof}

\section{Essential surjectivity}

\subsection{Models of K3 surfaces with complex multiplication}\label{sec:CM-models}
We briefly recall a few facts about complex K3 surfaces with complex multiplication. We refer to \cite{Zarhin83} for proofs. Let $X/\bC$ be a K3 surface. Its ($\bQ$-)transcendental lattice $V_X$ is defined as the orthogonal complement of $\NS(X) \otimes_\bZ \bQ$ in $\rH^2(X,\,\bQ(1))$. The endomorphism algebra $E$ of the $\bQ$-Hodge structure $V_X$  is a field, and we say that $X$ has \emph{complex multiplication} (by $E$) if $V_X$ is one-dimensional as an $E$-vector space. In this case, $E$ is necessarily a CM-field. Denote its complex conjugation by $\sigma\colon E\to E$. The Mumford-Tate group of the Hodge structure $V_X$ is the algebraic torus $T/\bQ$ defined by
\[
	T(A) = \{ x \in (A \otimes E)^\times \mid x\sigma(x)=1 \}
\]
for all $\bQ$-algebras $A$. Note that $T$ is an algebraic subgroup of $\SO(V_X)$.

If $X/\bC$ is a K3 surface with CM by $E$, then it can be defined over a number field. In \cite[Theorem~2]{Taelman17} we classified the models $\cX$ of $X$ over finite extensions $F$ of $E$ in terms of their Galois representations on $\rH^2_\et(\cX_{\bar F},\hat\bZ)$. We will deduce from that result a version for models over local fields. In the statement we will need the composition
\[
	\rec\colon \Gal_E^\ab \cong \bA_{E,f}^\times/E^\times \longto T(\bA_f)/T(\bQ),
\]
where the isomorphism is given by global class field theory (note that $E$ has no real places), and the second map is given by $z \mapsto \frac{z}{\sigma(z)}$.

\begin{theorem}\label{thm:local-descent}
Let $X/\bC$ be a K3 surface with CM by $E$. Let $K$ be a $p$-adic field containing $E$, and fix an embedding $\iota\colon K \to \bC$ extending the embedding $E\to \bC$ given by the action of
$E$ on $\rH^{2,0}(X)$. Let
\[
	\rho \colon \Gal_K \to \rO(\rH^2(X,\hat\bZ(1)))
\]
be a continuous homomorphism.  Assume that 
for every $\sigma \in \Gal_K$ we have
\begin{enumerate}
\item $\rho(\sigma)$ stabilizes $\NS(X) \subset  \rH^2(X,\hat\bZ(1))$ and $\cK_X \subset \NS(X)\otimes \bR$,
\item the restriction of $\rho(\sigma)$ to the transcendental lattice lands in the subgroup $T(\bA_f) \subset \rO(V_X \otimes \bA_f)$ and its image in $T(\bA_f)/T(\bQ)$ is  $\rec(\sigma)$.
%\item there is an $\epsilon \in \{\pm1\}$ so that for all primes $\ell$ we have that $\rho(\sigma)$ acts as  $\epsilon$~on $\det \rH^2(X, \bZ_\ell(1))$.
\end{enumerate}
Then there exists a model $\cX/K$ of $X$ so that the resulting action of $\Gal_K$ on  $\rH^2_\et(\cX_{\bar K},\hat\bZ(1)) = \rH^2(X,\hat\bZ(1))$ coincides with $\rho$.
\end{theorem}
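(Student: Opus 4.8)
The plan is to deduce the statement from the classification of models over number fields in \cite[Theorem~2]{Taelman17} by a spreading-out argument. Using $\iota$, I would first realize $K$ as a completion of a number field: choosing $\theta$ with $K=\bQ_p(\theta)$ and approximating its minimal polynomial by one over $E$, Krasner's lemma produces a number field $F\subset K$ (inside $\bC$ via $\iota$) with $E\subset F$ and $F_v=K$ at the place $v$ induced by $\iota$, identifying $\Gal_{F_v}$ with $\Gal_K$. By the theory of complex multiplication, and using that $\iota$ extends the CM embedding $E\to\bC$ coming from the action on $\rH^{2,0}(X)$, the surface $X$ has a model over a number field contained in $K$; enlarging $F$ within $K$ (harmless, as we only need to preserve a place $v$ with $F_v=K$) I may assume $X$ already has a model $\cX_0/F$. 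Its Galois representation $\rho_0\colon\Gal_F\to\rO(\rH^2(X,\hat{\bZ}(1)))$ satisfies the hypotheses of \cite[Theorem~2]{Taelman17} with the global reciprocity map $\rec_F$.

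The task then reduces to producing a global representation $\tilde\rho$ satisfying the same hypotheses and restricting to $\rho$ on $\Gal_{F_v}=\Gal_K$. The compatibility of global and local class field theory gives $\rec_F|_{\Gal_K}=\rec_K$, so $\rho$ and $\rho_0|_{\Gal_K}$ obey identical local conditions; in particular, on the transcendental lattice both lift $\rec_K$ to $T(\bA_f)$, so they differ by a continuous homomorphism $\Gal_K\to T(\bQ)$, which has finite image because $T(\bQ)$ is a finitely generated group. On $\NS(X)$ both land in the finite group of cone-preserving isometries. Hence $\rho$ and $\rho_0|_{\Gal_K}$ differ by a cocycle valued in the finite group $A\subset\rO(\rH^2(X,\hat{\bZ}(1)))$ of isometries fixing the reciprocity class, namely the cone-preserving isometries of $\NS$ together with the finite-order elements of $T(\bQ)$ acting on the transcendental lattice. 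I would then twist $\rho_0$ by a global class in $\rH^1(\Gal_F,A)$ whose localization at $v$ is this prescribed difference; since $A$ fixes the reciprocity class and the ample cone, the twisted representation $\tilde\rho$ still satisfies conditions (i) and (ii).

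The main obstacle is the existence of a global twisting class with prescribed localization at the single place $v$. This is a realization problem: it amounts to solving an embedding problem for $\Gal_F$ with coefficients in $A$ and a single imposed local condition at $v$, which I would handle by a weak approximation / Grunwald--Wang argument, arranging the global extension to be unramified outside $v$ and a finite auxiliary set of places; the classical exceptional case of Grunwald--Wang is avoided using the freedom in the choice of $F$. A further point requiring care is that $\rH^2(X,\hat{\bZ}(1))$ is only rationally the orthogonal direct sum of $\NS(X)$ and the transcendental lattice, so the $\NS$- and transcendental contributions to the twist must be matched over $\hat{\bZ}$; this integral bookkeeping is precisely what the formalism of \cite[Theorem~2]{Taelman17} is designed to control.

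Granting the global representation $\tilde\rho$, \cite[Theorem~2]{Taelman17} produces a model $\cX/F$ whose $\Gal_F$-action on $\rH^2_\et(\cX_{\bar F},\hat{\bZ}(1))=\rH^2(X,\hat{\bZ}(1))$ is $\tilde\rho$. Base changing along $F\hookrightarrow K$ (compatibly with $\iota$) gives a model $\cX_K:=\cX\otimes_F K$, and by smooth and proper base change the $\Gal_K$-action on $\rH^2_\et((\cX_K)_{\bar K},\hat{\bZ}(1))$ is the restriction $\tilde\rho|_{\Gal_{F_v}}=\rho$, which is what we wanted.
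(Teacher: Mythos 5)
Your overall strategy (fix a reference global model, compare with $\rho$, and twist by a global cohomology class with prescribed localization) is genuinely different from the paper's, but it has a gap at exactly the step you yourself flag as ``the main obstacle,'' and the Grunwald--Wang argument you invoke does not close it. The difference cochain $c(\sigma)=\rho(\sigma)\rho_0(\sigma)^{-1}$ takes values in the group $A$ of integral isometries of $\rH^2(X,\hat\bZ(1))$ preserving the cone and mapping to $1$ in $T(\bA_f)/T(\bQ)$, and it is a $1$-cocycle for the conjugation action via $\rho_0$, not a homomorphism: $A$ is in general nonabelian, and its Galois action is nontrivial. What you would need is surjectivity of the localization map $\rH^1(\Gal_F,A)\to \rH^1(\Gal_{F_v},A)$ onto the class of a prescribed cocycle on the \emph{full} decomposition group, for a finite nonabelian coefficient group with nontrivial action. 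Grunwald--Wang only gives this for characters with values in finite abelian groups with trivial action (and even then with the well-known special case); the nonabelian statement is an embedding-problem-type assertion for which you give no argument. Two of your auxiliary claims are also false as stated: $T(\bQ)=\{x\in E^\times \mid x\sigma(x)=1\}$ is \emph{not} finitely generated (already for $E$ imaginary quadratic it has infinite rank); the finite-image conclusion instead follows from discreteness of $T(\bQ)$ in $T(\bA_f)$ (valid because $T(\bR)$ is compact, $E$ being a CM field) together with compactness of $\Gal_K$. Likewise, the group of cone-preserving isometries of $\NS(X)$ is generally \emph{infinite} (K3 surfaces can have infinite automorphism groups); finiteness of the relevant image holds only because a compact subgroup of a countable discrete-image situation must be finite. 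Finally, the integral matching of the $\NS$ and transcendental parts, which you defer to ``the formalism of \cite[Theorem~2]{Taelman17},'' is precisely where $A$ has to be identified inside the integral orthogonal group; it is not handled automatically.

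The paper avoids the global realization problem entirely, and its mechanism shows how to repair your argument. It reformulates the hypotheses as saying that $\rho$ is a lift, along the map $\delta\colon\Gamma\to T(\bA_f)/T(\bQ)$, of the restriction of $\rec$ to $\Gal_K$; it proves, using Ono's finiteness theorems for tori \cite{Ono59}, that $\delta$ is a continuous open map with \emph{finite kernel}; it then forms the fiber product $G_0=\Gamma\times_{T(\bA_f)/T(\bQ)}\Gal_E$ and applies a purely profinite-group lemma: a continuous lift defined on a closed subgroup of the base of a finite-kernel open cover extends to some open subgroup. This extends $\rho$ itself to $\Gal_F$ for a suitable number field $E\subset F\subset K$ --- no Krasner step to force $F_v=K$, no reference model $\cX_0$, no twisting, no Grunwald--Wang --- after which \cite[Theorem~2]{Taelman17} applies directly and one base changes to $K$. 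The moral is that the freedom to shrink $F$ inside $K$ replaces any global-with-prescribed-local construction: if you want to keep your twisting framework, you should only ask to extend the cocycle $c$ to \emph{some} open subgroup $\Gal_F\supset\Gal_K$, which is exactly the paper's lifting lemma applied to the fiber product, and requires no global arithmetic input beyond the finiteness of $\ker\delta$.
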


We can reformulate the conditions on $\rho$ as follows. Denote by $\Gamma \subset \rO(\rH^2(X,\hat\bZ(1)))$ the subgroup consisting of those $g$ satisfying
\begin{enumerate}
\item $g$  stabilizes $\NS(X) \subset  \rH^2(X,\hat\bZ(1))$ and $\cK_X \subset \NS(X)\otimes \bR$,
\item the induced action on $V_X \otimes \bA_f$ factors over $T(\bA_f)$,
%\item  there is an $\epsilon \in \{\pm1\}$ so that for all $\ell$ we have $\det g_\ell = \epsilon$ in $\bQ_\ell^\times$.
\end{enumerate}
Then $\rho \colon \Gal_K \to \rO(\rH^2(X,\hat\bZ(1)))$ satisfies the conditions in the theorem if and only if it factors over 
$\Gamma$, and makes the square
\[
\begin{tikzcd}
\Gal_K \arrow{r} \arrow{d}{\rho} & \Gal_E \arrow{d}{\rec} \\
\Gamma \arrow{r}{\delta} & T(\bA_f)/T(\bQ)
\end{tikzcd}
\]
commute. In particular, we can consider $\rho$ as a lift of the map $\rec$. This point of view will be useful in the proof of Theorem~\ref{thm:local-descent}.

\begin{lemma}The map $\delta\colon \Gamma \to T(\bA_f)/T(\bQ)$ is a continuous open homomorphism of profinite groups with finite kernel. 
\end{lemma}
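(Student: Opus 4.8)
The plan is to unpack the definition of $\Gamma$ and the map $\delta$, and to check continuity, openness, and finiteness of the kernel separately. Recall that $\Gamma$ consists of isometries $g$ of $\rH^2(X,\hat\bZ(1))$ stabilizing $\NS(X)$ and the cone $\cK_X$, and whose action on the transcendental lattice $V_X \otimes \bA_f$ factors through the torus $T(\bA_f)$; the homomorphism $\delta$ sends such a $g$ to the class in $T(\bA_f)/T(\bQ)$ of the induced automorphism of $V_X \otimes \bA_f$. That $\delta$ is a homomorphism of profinite groups is immediate from the construction, so the content is in the three topological/algebraic assertions.

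First I would establish continuity: $\Gamma$ carries the profinite topology as a closed subgroup of $\rO(\rH^2(X,\hat\bZ(1)))$ (the conditions defining $\Gamma$ are closed), and the assignment $g \mapsto g|_{V_X \otimes \bA_f}$ is continuous by restriction, so composing with the (continuous) projection $T(\bA_f) \to T(\bA_f)/T(\bQ)$ gives continuity of $\delta$. Next, for the finite kernel: an element $g \in \ker\delta$ acts on $V_X \otimes \bA_f$ through an element of $T(\bQ)$. Since $T$ is an anisotropic torus (a norm-one torus of a CM field), $T(\bQ)$ is finite—indeed $T(\bQ)$ consists of elements $x \in E^\times$ with $x\sigma(x)=1$, and these are roots of unity in $E$ by a standard compactness/finiteness argument for norm-one units in a CM field. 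Such $g$ acts on $V_X$ through a finite group, and is at the same time an isometry of the integral lattice $\rH^2(X,\hat\bZ(1))$ preserving $\NS(X)$; the group of isometries of the integral lattice inducing a fixed element on $V_X \otimes \bA_f$ and preserving the decomposition is controlled by the (finite) isometry group of $\NS(X)$ stabilizing the cone, so $\ker\delta$ is finite.

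For openness, the essential point is that the image $\delta(\Gamma)$ is an open (equivalently finite-index) subgroup of $T(\bA_f)/T(\bQ)$, combined with the open mapping theorem for continuous surjective homomorphisms of profinite (or locally compact totally disconnected) groups. Here I would argue that $\Gamma$ surjects onto a finite-index subgroup of $T(\hat\bZ)$-type data: every element of $T(\bA_f)$ preserving the integral lattice $V_X \otimes \hat\bZ$ and compatible with an isometry of $\rH^2(X,\hat\bZ(1))$ arises, up to $T(\bQ)$, from some $g \in \Gamma$. Since $T(\bA_f)/T(\bQ)$ is compact modulo the maximal compact $T(\hat\bZ)$-part only up to finite index, and the integral isometries realize an open subgroup, one gets that $\delta(\Gamma)$ is open, whence $\delta$ is an open map.

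The main obstacle I expect is the openness statement, specifically verifying that $\Gamma$ realizes an open subgroup of $T(\bA_f)/T(\bQ)$: one must check that every element of $T(\bA_f)$ that stabilizes the integral structure $V_X \otimes \hat\bZ$ is in fact induced (modulo $T(\bQ)$) by an honest isometry $g$ of the full integral lattice $\rH^2(X,\hat\bZ(1))$ that also preserves $\NS(X)$ and the ample cone $\cK_X$. This requires extending an isometry defined on the transcendental part to the whole unimodular lattice—using that $\NS(X)$ and $V_X$ are mutual orthogonal complements inside the unimodular lattice—and checking the ample-cone condition can be arranged by composing with a Weyl-group element if necessary. The finiteness of $T(\bQ)$ and the structure theory of the CM torus $T$ attached to $E$ are exactly what make these adjustments possible within a finite-index ambiguity.
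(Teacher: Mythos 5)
Your overall architecture for openness (realize an open piece of $T(\bA_f)$ inside $\Gamma$, then use a finiteness statement for $T(\bQ)\backslash T(\bA_f)$ modulo a compact open subgroup) is close in spirit to the paper's proof, which passes to the subgroup $\Gamma_0\subset\Gamma$ of elements acting trivially on $\NS(X)$, identifies $\Gamma_0$ with a compact open subgroup of $T(\bA_f)$, and reduces the whole lemma to showing that $\cK\to T(\bA_f)/T(\bQ)$ has finite kernel and cokernel for $\cK$ the maximal compact open subgroup, the cokernel being Ono's theorem on finiteness of class numbers of tori. But your kernel argument contains a genuine error: $T(\bQ)$ is \emph{not} finite. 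Already for $E$ imaginary quadratic, say $E=\bQ(i)$, the group $T(\bQ)=\{x\in E^\times \mid x\sigma(x)=1\}$ contains $(3+4i)/5$ and, by Hilbert~90, all elements $z/\sigma(z)$ with $z\in E^\times$ --- an infinite set. Anisotropy of $T$ over $\bQ$ yields compactness of $T(\bR)$ and cocompactness statements for $T(\bQ)$ in the adeles, not finiteness of rational points; and the ``norm-one elements are roots of unity'' argument (Kronecker) applies only to \emph{units} $x\in\cO_E^\times$, whereas a general $x\in T(\bQ)$ need not be integral. The missing step --- exactly what the paper supplies --- is integrality: an element of $\ker\delta$ induces a $t\in T(\bQ)$ which moreover preserves the lattice $V_X\otimes\hat\bZ$, so $t$ lies in the intersection of $T(\bQ)$ with a compact open subgroup; then $t$ and $\sigma(t)=t^{-1}$ are both integral, whence $t\in\{x\in\cO_E^\times\mid x\sigma(x)=1\}$, which is finite because $E$ is CM. Without routing the kernel through the lattice stabilizer, your finiteness claim fails.

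Two secondary points. First, you assert that the group of isometries of $\NS(X)$ stabilizing the cone $\cK_X$ is finite; in general it is not (it is commensurable with $\Aut$ of the corresponding complex K3 surface, which can be infinite). The paper does not use any such claim: it works with the finite-index subgroup $\Gamma_0$ acting trivially on $\NS(X)$, and for openness it glues an element $t\in T(\bA_f)$ preserving $V_X\otimes\hat\bZ$ and acting trivially on the discriminant group with the \emph{identity} on $\NS(X)$ --- this makes the cone condition automatic and avoids your Weyl-group adjustments and the delicate extension problem you flag as the main obstacle. Second, your phrase that $T(\bA_f)/T(\bQ)$ is ``compact modulo the maximal compact part up to finite index'' is precisely the class-number finiteness $\#\big(T(\bQ)\backslash T(\bA_f)/\cK\big)<\infty$; this is a genuine theorem (Ono, cited in the paper) and is the real content of the openness assertion, so it should be invoked explicitly rather than treated as formal bookkeeping.
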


\begin{proof}
Let $\Gamma_0 \subset \Gamma$ be the open subgroup of finite index consisting of those elements that act trivially on $\NS(X)$. The map $\Gamma_0 \to T(\bA_f)$ is injective, and identifies $\Gamma_0$ with a compact open subgroup. Let $\cK$ be the maximal compact open subgroup. Then it suffices to show that $\cK \to T(\bA_f)/T(\bQ)$ has finite kernel and cokernel. The kernel is
 $\{ x \in \cO_E^\times  \mid x\sigma(x) = 1 \}$, which is finite because $E$ is a CM-field with complex conjugation $\sigma$. The finiteness of the cokernel  $T(\bQ) \backslash T(\bA_f)/\cK$ is a property of arbitrary tori over $\bQ$, see \cite[Prop.~9 \& Thm.~2]{Ono59}.
\end{proof}

\begin{lemma}\label{lemma:lifting}
Let $\delta\colon G_0 \to G_1$ be an open continuous homomorphism of profinite groups with finite kernel. Let $H$ be a closed subgroup of $G_1$, and $\rho\colon H\to G_0$ a continuous momorphism making the triangle
\[
\begin{tikzcd}
H \arrow[swap]{d}{\rho} \arrow[hook]{rd} &   \\
G_0 \arrow{r}{\delta} &  G_1
\end{tikzcd}
\]
commute.  Then there exists an open subgroup $U\subset G_1$ containing $H$, and a continuous homomorphism $\rho'\colon U \to G_0$ making the square
\[
\begin{tikzcd}
H \arrow[swap]{d}{\rho} \arrow[hook]{r} & U \arrow[dashed,swap]{ld}{\rho'} \arrow[hook]{d}   \\
G_0 \arrow{r}{\delta} &  G_1
\end{tikzcd}
\]
commute.
\end{lemma}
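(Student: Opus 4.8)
The plan is to reformulate the problem inside $G_0$: producing the extension $\rho'$ is the same as enlarging the closed subgroup $\rho(H)\subseteq G_0$ to an open subgroup on which $\delta$ is injective. Write $N:=\ker\delta$, a finite normal subgroup of $G_0$, and set $H':=\rho(H)$. Since $\rho$ is continuous and $H$ is compact, $H'$ is a closed subgroup; and since $\delta\circ\rho$ is the inclusion of $H$, the map $\rho$ is injective and $\delta|_{H'}$ is injective, i.e.\ $H'\cap N=\{1\}$. If I can find an open subgroup $U'\subseteq G_0$ with $H'\subseteq U'$ and $U'\cap N=\{1\}$, then $\delta|_{U'}$ is an injective continuous homomorphism of profinite groups, hence a topological isomorphism onto the open subgroup $U:=\delta(U')$ of $G_1$ (openness of $U$ uses that $\delta$ is an open map). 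Taking $\rho':=(\delta|_{U'})^{-1}\colon U\to G_0$ then gives a continuous section of $\delta$ over $U$, and since $\rho(h)\in U'$ with $\delta(\rho(h))=h$ for $h\in H$, injectivity of $\delta|_{U'}$ forces $\rho'|_H=\rho$. This makes the required square commute, with $H\subseteq\delta(H')\subseteq U$.

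The remaining, and only substantive, point is the existence of such a $U'$, and this is where the finiteness of $N$ and the profinite topology do the work. I would argue as follows: enumerate $N\setminus\{1\}=\{n_1,\dots,n_k\}$; since $H'$ is closed, none of the $n_i$ lies in $H'$, and in a profinite group a closed subgroup is the intersection of the open subgroups containing it. Hence for each $i$ there is an open subgroup $U_i\supseteq H'$ with $n_i\notin U_i$ (concretely, choose an open normal $W_i\trianglelefteq G_0$ with $n_i\notin H'W_i$ and set $U_i=H'W_i$). Then $U':=\bigcap_{i=1}^k U_i$ is open, contains $H'$, and excludes every $n_i$, so $U'\cap N=\{1\}$.

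The hard part is essentially conceptual rather than technical: recognizing that ``extend the section to an open neighborhood of $H$'' translates into ``separate the closed subgroup $\rho(H)$ from the finitely many nontrivial elements of $\ker\delta$ by an open subgroup'', at which point compactness of $H$, the profinite separation property, and the finiteness of the kernel combine to finish the argument. I do not expect any genuine obstruction beyond bookkeeping: the openness of $U$ uses that $\delta$ is open, the continuity of $\rho'$ uses that a continuous bijective homomorphism of profinite groups is a homeomorphism, and the compatibility $\rho'|_H=\rho$ is forced by injectivity of $\delta|_{U'}$.
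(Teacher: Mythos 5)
Your proof is correct, and it takes a genuinely different route from the paper's. The paper works at the identity: it chooses an open subgroup of $G_0$ meeting the finite kernel trivially, passes to its normal core $N_0$, transports the inverse isomorphism $s\colon N_1=\delta(N_0)\isomto N_0$, and then must \emph{glue} $s$ with $\rho$ on the product $U=N_1\cdot H$ --- which requires a shrinking argument (the locally constant difference map $g\mapsto s(g)\rho(g)^{-1}$ into $\ker\delta$ is made constant on $N_1\cap H$) plus well-definedness and multiplicativity checks that lean on the normality of $N_0$ and on $H$ normalizing $N_1$. You instead work at the subgroup $\rho(H)$ itself: using the standard separation property that a closed subgroup of a profinite group is the intersection of the open subgroups containing it (your concrete construction $U_i=H'W_i$ with $W_i$ open normal is exactly the standard proof of this fact), together with finiteness of $\ker\delta$ to get by with finitely many separations, you enlarge $\rho(H)$ to an open $U'\subseteq G_0$ with $U'\cap\ker\delta=\{1\}$ and simply take $\rho'=(\delta|_{U'})^{-1}$ on $U=\delta(U')$. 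All your supporting steps check out: $\rho(H)$ is compact hence closed, $\rho(H)\cap\ker\delta=\{1\}$ follows from $\delta\circ\rho$ being the inclusion, openness of $U$ uses that $\delta$ is open, continuity of $\rho'$ follows since a continuous bijection from the compact $U'$ to the Hausdorff $U$ is a homeomorphism, and $\rho'|_H=\rho$ is forced by injectivity of $\delta|_{U'}$. What your approach buys is the complete elimination of the gluing: since $U'$ contains $\rho(H)$ by construction, compatibility with $\rho$ is automatic, and no normality or product-decomposition bookkeeping is needed. What the paper's approach uses is slightly less machinery --- only that open subgroups form a neighborhood basis of the identity, rather than the (equally standard, but stronger-sounding) fact that closed subgroups are intersections of open ones --- at the cost of the reconciliation step. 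Your version is arguably the cleaner argument.
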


\begin{proof}
Since the kernel of $\delta$ is finite, there exists an open subgroup $H_0 \subset G_0$ with $\ker \delta \cap H_0=\{1\}$. The normalizer of $H_0$ has finite index in $G_0$, so the intersection $N_0 := \bigcap_{g\in G_0} gH_0 g^{-1}$ is a normal open subgroup on which $\delta$ is injective.  Denote by $N_1 \subset G_1$ its image, and by $s\colon N_1 \isomto N_0$ the inverse isomorphism. Note that $N_1\subset G_1$ is open, and normalized by $H\subset G$.

Consider the continuous function
\[
	N_1 \cap H \to  G_0,\, g \mapsto s(g) \rho(g)^{-1}.
\]
It takes values in the finite subset $\ker \delta \subset G_0$, and maps $1$ to $1$. The collection of 
subgroups of the form $N_1 \cap H$ (for varying normal open $N_0 \subset G_0$) is a basis for the topology on $H$, so shrinking $N_0$ if necessary, we may without loss of generality assume that the above map is constant. We then have $s(g) = \rho(g)$ for all $g\in N_1 \cap H$.

The product $U := N_1 \cdot H$ is an open subgroup of $G$ containing $H$, and by the above  the map
\[
	\rho'\colon U \to G_0,\, gh \mapsto s(g) \rho(h) \quad\quad\quad ( g\in N_1,\,\, h \in H )
\]
is a well-defined homomorphism satisfying the required properties.
\end{proof}

\begin{proof}[Proof of Theorem \ref{thm:local-descent}]
Let $G_0$ be the topological group
defined by the cartesian square
\[
\begin{tikzcd}
G_0 \arrow{r}{\delta'} \arrow{d} & \Gal_E \arrow{d}{\rec} \\
\Gamma \arrow{r}{\delta} & T(\bA_f)/T(\bQ)
\end{tikzcd}
\]
Note that $\delta'$ is open with finite kernel, and hence that $G_0$ is a profinite group. Now let $\rho$ be as in the statement of the theorem. Then it induces a commutative triangle
\[
\begin{tikzcd}
\Gal_K \arrow[swap]{d}{\rho'} \arrow[hook]{rd} &   \\
G_0 \arrow{r}{\delta'} &  \Gal_E.
\end{tikzcd}
\]
By Lemma \ref{lemma:lifting} there exists an intermediate field  $E\subset F \subset K$ with $F$ finite over $E$, and a continuous homomorphism $\rho'' \colon \Gal_F \to G_0$ making the diagram
\[
\begin{tikzcd}
\Gal_K \arrow[swap]{d}{\rho'} \arrow{r}  & \Gal_F \arrow[hook]{d} \arrow[swap]{ld}{\rho''}  \\
G_0 \arrow{r}{\delta'} &  \Gal_E.
\end{tikzcd}
\]
commute. Now \cite[Theorem~2]{Taelman17} guarantees the existence of a model $\cX$ over $F$
whose Galois action on $\rH^2_\et(\cX_{\bar F},\,\hat\bZ(1))=\rH^2(X,\,\hat\bZ(1))$ is given by the composition
\[
	\Gal_F \overset{\rho''}\longto G_0 \longto \Gamma,
\]
and hence the
base change of $\cX$ to $K$ fulfils the requirements. 
%(Note that the determinant condition in \emph{loc.~cit.} is superfluous, since the condition $x\sigma(x)=1$ defining $T$ implies that any element of $T(\bA_f)$ acts with determinant $1$ on $V_{X} \otimes \bA_f$, and any element of $\Gamma$ acts with determinant in $\bZ^\times =\{\pm 1\}$ on $\NS(X)\otimes \bA_f$.)
\end{proof}

\subsection{Criteria of good reduction}
Let $\cO_K$ be a discrete valuation ring with fraction field $K$ and perfect residue field $k$. In the introduction we defined a property ($\star$) for K3 surfaces over $K$.

\begin{theorem}[Liedtke--Matsumoto \cite{LiedtkeMatsumoto18}]
\label{thm:LM}
Let $X$ be a K3 surface over $K$ satisfying $(\star)$. If for some $\ell$ different from the characteristic of $k$ the action of $\Gal_K$ on $\rH^2_\et(X_{\bar K},\,\bZ_\ell) $
is unramified, then there exists a finite unramified extension $K \subset K'$ and a proper smooth
algebraic space $\fX'$ over $\cO_{K'}$ with $\fX_{K'} \cong X_{K'}$.\qed
\end{theorem}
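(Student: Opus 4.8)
The plan is to deduce \emph{potential} good reduction from $(\star)$ together with the vanishing of the monodromy operator, and then to remove the ramified part of the field of definition. First I would invoke $(\star)$ to obtain a finite extension $K\subset L$ and a regular algebraic space $\fX/\cO_L$ with $\fX_L\cong X_L$, with reduced strict normal crossings special fiber and trivial relative dualizing sheaf. After suitable blow-downs and contractions one may arrange $\fX$ to be a \emph{Kulikov model}, i.e.\ a semistable model whose special fiber is a combinatorial degeneration of K3 type. The goal is then to show that unramifiedness of cohomology forces this special fiber to be smooth, and that the totally ramified part of $L/K$ is superfluous.

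The key reading of the hypothesis is through the monodromy. Special fibers of Kulikov models fall into three classes (Types I, II, III) by the theory of Kulikov and Persson--Pinkham in characteristic $0$, together with its mixed-characteristic analogue; the decisive fact is that the type is detected by the monodromy operator $N$ on $\rH^2_\et(X_{\bar K},\bQ_\ell)$ via the weight--monodromy filtration. Type~I (smooth special fiber, i.e.\ good reduction over $\cO_L$) corresponds to $N=0$, Type~II to $N\neq 0=N^2$, and Type~III to $N^2\neq 0$. Since $\rH^2_\et(X_{\bar K},\bZ_\ell)$ is unramified over $K$, the inertia subgroup acts trivially, so $N=0$; hence the degeneration is of Type~I and, after enlarging $L$ if necessary, the minimal Kulikov model is smooth over $\cO_L$. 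Thus $X$ has potential good reduction.

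For the descent I would replace $K$ by $L\cap K^\nr$, so that $L/K$ becomes totally ramified, $X_L$ has good reduction with smooth model $\fX/\cO_L$ and special fiber a K3 surface $Y/k$, while $\rH^2_\et(X_{\bar K},\bZ_\ell)$ is still unramified over $K$. The task is to show that this totally ramified part is unnecessary. Base changing to the completion $\widehat{K^\nr}$ of the maximal unramified extension, the residue field becomes $\bar k$, the absolute Galois group becomes the inertia group $I_K$, and unramifiedness now says $I_K$ acts trivially on cohomology. Over this base I would argue that triviality of the inertia action, combined with the uniqueness of smooth models of a polarized K3 surface over a complete discrete valuation ring (Matsusaka--Mumford \cite{MatsusakaMumford64}), forces the canonical descent datum on $\fX$ to be effective up to the finite group $\Aut(Y)$; the residual obstruction lives in a Galois-cohomology set built from $\Aut(Y)$, which is trivialized after a finite unramified extension. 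Spreading the resulting model back out then produces a finite unramified $K\subset K'$ and a smooth model $\fX'/\cO_{K'}$ with $\fX'_{K'}\cong X_{K'}$.

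I expect two steps to carry the real difficulty. The first is the mixed-characteristic classification of semistable K3 degenerations used in the monodromy dictionary of Step~2: in characteristic $0$ this is classical, but in mixed characteristic the identification of the possible special fibers and the precise correspondence with $N$ is delicate. The second, and genuinely the crux, is the descent of Step~3 --- turning potential good reduction into good reduction over an \emph{unramified} extension. This is the point at which the unramifiedness of the Galois representation (rather than mere potential good reduction) is actually used, and it requires both the uniqueness of models and a careful control of $\Aut(Y)$ and of the inertia action.
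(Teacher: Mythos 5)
The paper gives no proof of this statement: it is quoted, with a \qed, from Liedtke--Matsumoto \cite{LiedtkeMatsumoto18}, so your attempt has to be measured against their argument. Your first two steps do reproduce the first half of that proof: $(\star)$ yields, after modifications (flops that may only exist as algebraic spaces, plus a further extension to make the fiber reduced), a Kulikov-type semistable model; the mixed-characteristic classification of such special fibers into Types I, II, III and the dictionary with the monodromy operator $N$ on $\rH^2_\et(X_{\bar K},\bQ_\ell)$ then show that unramifiedness forces $N=0$, hence Type I and potential good reduction. You correctly flag the delicacy of the mixed-characteristic classification; this is indeed established in \cite{LiedtkeMatsumoto18} via the weight spectral sequence.

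The genuine gap is in your Step 3, which you rightly identify as the crux but resolve incorrectly. Matsusaka--Mumford \cite{MatsusakaMumford64} gives uniqueness of \emph{polarized} models, and this is exactly what is unavailable here: the ample line bundle on $X$ extends to the smooth model $\fX/\cO_L$ only as a big and nef bundle, whose failure of ampleness on the special fiber is caused by $(-2)$-curves. Smooth models over $\cO_L$ are therefore \emph{not} unique --- they differ by flops in these curves --- so the descent datum along the totally ramified extension $L/K$ need not preserve any given smooth model, and effectivity of descent fails anyway for non-projective algebraic spaces. Moreover the obstruction is not governed by $\Aut(Y)$ but by the Weyl group generated by reflections in the exceptional $(-2)$-classes, which permutes the possible smooth models. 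Liedtke--Matsumoto circumvent this by contracting those curves to form the \emph{canonical RDP model} $P(X_L,\cL)$: being projective, canonical, hence compatible with the Galois action, it descends to $\cO_K$ (after replacing $K$ by $L\cap K^\nr$). They then invoke Artin's simultaneous resolution, which exists as an algebraic space after a finite cover that is a torsor under this Weyl group, and the unramifiedness hypothesis is used precisely here: inertia acts trivially on $\rH^2_\et$, hence trivially on the classes of the exceptional curves, so the Weyl cover may be taken unramified, producing the smooth algebraic space model $\fX'$ over a finite unramified $K'$ (this is also why the conclusion only asserts an algebraic space). Your $\Aut(Y)$-cohomology argument contains no mechanism that sees this Weyl-group obstruction, and without first contracting to the RDP model the descent cannot even be set up.
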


Analyzing the proof in the case where the specialization map on Picard groups is bijective, one obtains a stronger conclusion.

\begin{proposition}\label{prop:CLL}
Let $X$ be a K3 surface over $K$, let $K\subset K'$ be an unramified extension, and let $\fX'$ over $\cO_{K'}$ be a proper smooth algebraic space with $\fX'_{K'} \cong X_{K'}$. If the reduction map
$\Pic \fX'_{\bar K} \to \Pic \fX'_{\bar k}$
is bijective, then there exists a smooth projective $\fX$ over $\cO_K$ with $\fX_K \cong X$. 
\end{proposition}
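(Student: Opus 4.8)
The plan is to follow the descent of Liedtke--Matsumoto underlying Theorem~\ref{thm:LM}, but to use the bijectivity of the specialization map $\Pic\fX'_{\bar K}\to\Pic\fX'_{\bar k}$ twice: once to upgrade the proper smooth algebraic space $\fX'$ to a projective \emph{scheme} over $\cO_{K'}$, and once to descend it from $\cO_{K'}$ to $\cO_K$. We may assume $K\subset K'$ is finite Galois with group $G$; since the extension is unramified, $\cO_{K'}$ is finite \'etale over $\cO_K$ and $G$ is identified with $\Gal(k'/k)$. The generic fibre $\fX'_{K'}\cong X_{K'}$ carries the canonical descent datum coming from the $K$-form $X$, and the whole problem is to extend this datum to the integral model in an effective way.

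First I would establish projectivity. As $X$ is a projective K3 surface over $K$, fix an ample line bundle $H$ on $X$. The total space $\fX'$ is regular (being smooth over the regular ring $\cO_{K'}$) and its special fibre is an integral principal divisor, so restriction identifies $\Pic\fX'\isomto\Pic\fX'_{K'}$; thus $H_{K'}$ extends uniquely to a line bundle $\widetilde H$ on $\fX'$. The point of the hypothesis is that the specialization map is an isometry which, being bijective, carries the ample cone of the geometric generic fibre onto the ample cone of $\fX'_{\bar k}$: there are no extra $(-2)$-classes on the special fibre to cut out new walls. Hence $\widetilde H$ restricts to an ample class on $\fX'_{\bar k}$, and since ampleness on the closed fibre of a proper morphism over a local base is equivalent to relative ampleness, $\widetilde H$ is relatively ample. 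Therefore $\fX'$ is a smooth projective scheme over $\cO_{K'}$, polarized by $\widetilde H$, and $\widetilde H$ is $G$-invariant because $H$ is already defined over $K$.

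It remains to descend $(\fX',\widetilde H)$ to $\cO_K$. For each $\sigma\in G$ the canonical descent datum on the generic fibre gives an isomorphism $\sigma^\ast X_{K'}\isomto X_{K'}$ preserving the polarization $H_{K'}$. By Matsusaka--Mumford \cite[Thm.~2]{MatsusakaMumford64} this isomorphism of polarized K3 surfaces over $K'$ extends uniquely to an isomorphism $\sigma^\ast\fX'\isomto\fX'$ of polarized smooth models over $\cO_{K'}$; the uniqueness clause forces these extensions to satisfy the cocycle condition, so they constitute a genuine descent datum on the projective $\cO_{K'}$-scheme $\fX'$. As $\fX'$ is projective and the polarization is part of the datum, descent along the finite \'etale cover $\cO_{K'}/\cO_K$ is effective and produces a smooth projective $\fX$ over $\cO_K$ with $\fX\otimes_{\cO_K}\cO_{K'}\cong\fX'$; descending the generic fibre recovers $\fX_K\cong X$.

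The step I expect to be the real obstacle is the very existence of the integral descent datum, i.e.\ the extension of the generic-fibre isomorphisms to the models. Smooth K3 models over a discrete valuation ring are \emph{not} unique in general (distinct smooth models are related by flops), which is exactly why Theorem~\ref{thm:LM} only yields an algebraic space after an unramified base change. The bijectivity of the Picard specialization is what removes this ambiguity: it is precisely the input that makes $\widetilde H$ relatively ample, and the resulting $G$-invariant polarization rigidifies the model so that Matsusaka--Mumford applies. Verifying that the ample cones correspond under the isometric specialization --- and hence that $\widetilde H$ stays ample on the special fibre --- is the technical heart of the argument.
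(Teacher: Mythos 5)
Your proof is correct, and the first half coincides with the paper's: both arguments use the bijectivity of the isometric specialization map to match the $(-2)$-walls on the two geometric fibers, conclude that the unique extension $\widetilde H$ of an ample class (via $\Pic\fX'\isomto\Pic\fX'_{K'}$, special fiber being a principal divisor) is ample on the special fiber and hence relatively ample, and so upgrade $\fX'$ to a smooth projective scheme over $\cO_{K'}$. (One remark on the chamber-anchoring, which you rightly flag as the heart: ``no new walls'' alone does not yet say the ample chamber goes to the ample chamber; the anchor is that specialization preserves effectivity, so an effective $(-2)$-class $C_0$ on the special fiber with $\widetilde H\cdot C_0\le 0$ would pull back under the bijection to a class $D$ with $\pm D$ effective, forcing $-D$ effective and hence $-[C_0]$ effective, which is absurd on the projective special fiber. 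The paper is equally terse here, deferring to the proof of Proposition~\ref{prop:ample-classes}.) Where you genuinely diverge is the descent from $\cO_{K'}$ to $\cO_K$. The paper never constructs a descent datum: it observes that the canonical RDP model $P(X',\cL')$ of Liedtke--Matsumoto \cite[Thm.~1.3]{LiedtkeMatsumoto18} is non-singular (it agrees with $\fX'$ once $\widetilde H$ is relatively ample) and that the construction is canonical and compatible with unramified base change, $P(X',\cL')=P(X,\cL)\otimes_{\cO_K}\cO_{K'}$ (end of section~6 of \cite{CLL}), so smoothness and projectivity of $P(X,\cL)$ follow by \'etale descent of these properties along $\cO_K\to\cO_{K'}$. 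You instead reduce to $K'/K$ finite Galois, extend the generic-fiber descent datum to the polarized model via Matsusaka--Mumford \cite[Thm.~2]{MatsusakaMumford64} (with the cocycle condition enforced by uniqueness of extensions, since $\fX'$ is integral and separated and the generic fiber is schematically dense), and then descend. Your route is self-contained --- it avoids the compatibility statement for the RDP model from \cite{CLL} --- at the cost of the Galois-closure reduction and an appeal to effectivity of descent; the paper's route avoids descent theory entirely. One small repair in your last step: invariance of the \emph{class} of $\widetilde H$ under the datum does not by itself give a descent datum on the line bundle (the comparison isomorphisms may fail the cocycle condition by scalars in $\cO_{K'}^\times$, and the residue field $k$ here is only perfect, so one cannot argue this obstruction away via the Brauer group). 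Either rigidify on a suitable power of $\widetilde H$, or, more simply, invoke effectivity of descent along finite locally free morphisms for schemes in which every finite set of points lies in an affine open --- which holds for the projective $\fX'$ over the affine base $\Spec\cO_{K'}$ (SGA~1, Exp.~VIII). This is a one-line fix, not a gap.
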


\begin{proof}
The map $\Pic \fX'_{\bar K} \isomto \Pic \fX'_{\bar k}$ identifies the $(-2)$-classes on generic and special fiber, and hence induces a bijection between the ample cones in $\Pic \fX'_{\bar K}$ and $\Pic \fX'_{\bar k}$ (see also the proof of Proposition~\ref{prop:ample-classes}). Now choose an ample line bundle $\cL$ on $X$. It induces an ample line bundle $\cL'$ on $X':=X_{K'}$, which extends to a relatively ample line bundle on $\fX'$. In particular, the canonical RDP model 
$P(X',\cL')$ over $\cO_{K'}$ of Liedtke and Matsumoto \cite[Thm.~1.3]{LiedtkeMatsumoto18} is non-singular. It follows from the construction of this model that $P(X',\cL') = P(X,\cL) \otimes_{\cO_K} \cO_{K'}$ (see the end of section 6 in \cite{CLL}), and hence $P(X,\cL)$ is a smooth projective model $\fX$ over $\cO_K$. 
\end{proof}

Alternatively, one can verify that under the hypothesis of Proposition \ref{prop:CLL} the group $\cW^\nr_{X,\cL}$ occurring in \cite[Thm.~1.4]{CLL} vanishes.

\subsection{Proof of Theorem \ref{bigthm:equivalence}}\label{sec:surjectivity}

In \S~\ref{sec:fully-faithful} we have established that the functor $X/\bF_q \mapsto (M,F,\cK)$ is fully faithful. To finish the proof of Theorem \ref{bigthm:equivalence}, it remains to show that the functor is essentially surjective, assuming ($\star$) holds for K3 surfaces over $p$-adic fields.

\begin{proof}[Proof of essential surjectivity in Theorem \ref{bigthm:equivalence}]
Let $(M,F,\cK)$ be a triple satisfying (M1)--(M5). We will show that it lies in the essential image of our functor by constructing a suitable K3 surface over $\bF_q$. We divide the construction in several steps.

\medskip\noindent
\emph{Construction of a complex K3 surface}.
By Lemma~\ref{lemma:hodge-structure} the decomposition
\[
	M_\bC  = M^{2,0} \oplus M^{1,1} \oplus M^{0,2}
\]
with $M^{s,2-s} := M^s \otimes_{\bZ_p,\iota} \bC$ defines a $\bZ$-Hodge structure on $M$. By the Torelli theorem for complex K3 surfaces, there is a projective K3 
surface $X$ and a Hodge isometry $f\colon \rH^2(X,\,\bZ) \isomto M$ mapping the ample cone of $X$ to $\cK$. The pair $(X,f)$ is unique up to unique isomorphism.

\medskip\noindent
\emph{$X$ has complex multiplication}. Let $V_X \subset \rH^2(X,\,\bQ(1))$ be the transcendental lattice. Note that $F$ respects the decomposition $\rH^2(X,\,\bQ(1)) = \NS(X)\otimes \bQ \oplus V_X$.  Every $\bQ$-linear endomorphism of $V_X$ that commutes with $F$ will respect the Hodge structure on $V_X$, and since the endomorphism algebra of the $\bQ$-Hodge structure $V_X$ is a field, we conclude that $V_X$ is a cyclic $\bQ[F]$-module, that $E:= \End V_X$ is generated by $F$, and that $\dim_E V_X=1$. In particular, $X$ has complex multiplication by $E$, the field $E$ is then a CM field, and if we denote the complex conjugation on $E$ by $\sigma$, then the Mumford-Tate group $T$ of $V_X$ satisfies $T(\bQ) = \{ x\in E^\times \mid x\sigma(x)=1\}$.
Observe that $\sigma(F) = q^2/F$  on $V_X$, and hence that $F/q$ defines an element of $T(\bQ)$.

The number field $E$ has a unique place $v \mid p$  satisfying $v(F)>0$. We have $E_v=\bQ_p$. Let $K$ be the fraction field of $W(\bF_q)$, considered as a subfield of $\bC$ via $\iota$.

%\medskip\noindent
%\emph{The Mumford-Tate group of $V_X$}.
%Let $T$ be the Mumford-Tate group of $V_X$. Note that $F/q$ is an element of $T(\bQ)$. 
%
%
% The decomposition of $M\otimes \bQ_p$ induces a decomposition $V\otimes \bQ_p = V^{-1} \oplus V^0 \oplus V^1$, and hence a decomposition
%\[
%	T_{\bQ_p} = \bG_{m,\bQ_p} \times T^0 
%\]
%where $\bG_m$ acts on $V^{-1} \oplus V^1$. Maybe omit all this? First give the real argument!

\medskip\noindent
\emph{Descent to $K\subset \bC$}. For every $\ell \neq p$ consider the unramified $\Gal_K$-representation
\[
	\rho_\ell\colon \Gal_K \to \GL( M\otimes \bZ_\ell) 
\]
given by letting the geometric Frobenius $\Frob$ act as $F$. We also define a $p$-adic $\Gal_K$-representation
\[
	\rho_p\colon \Gal_K \to \GL( M\otimes \bZ_p) = \GL( \oplus_s M^s )
\]
by declaring that the Tate twisted $\bZ_p[\Gal_K]$-modules $M^s(s)$ are unramified with geometric Frobenius $\Frob$ acting as $F/q^s$. 
The $\rho_\ell$ and $\rho_p$ assemble into an action of $\Gal_K$ on $M\otimes \hat\bZ$. Denote by 
$M\otimes \hat\bZ(1)$ its Tate twist. The resulting map
\[
	\rho \colon \Gal_K \to \GL(M\otimes \hat \bZ(1)) = \GL(\rH^2(X,\,\hat\bZ(1))
\]
satisfies
\begin{enumerate}
%\item $\det \rho$ is unramified and $\det \rho(\Frob) = \det (F/q, M) \in \{\pm 1\} \subset \hat\bZ^\times$,
\item the image of $\rho$ is contained in $O(\rH^2(X,\,\hat\bZ(1)))$,
\item the image of $\rho$ preserves $\Pic X$ and the ample cone
$\cK \subset (\Pic X)\otimes \bR$.
\end{enumerate}
We claim that $\rho$ also satisfies the reciprocity condition in Theorem~\ref{thm:local-descent}. 

Indeed,
observe that the action of $\Gal_K$ on $M\otimes \hat\bZ(1)$ is abelian. Let $x\in K^\times$. Using Lemma~\ref{lemma:pure-crys} we see that the action of the corresponding $\tau = \tau(x) \in \Gal_K^\ab$ on $M\otimes \hat \bZ(1)$ satisfies
\begin{enumerate}
\item $\tau$ acts on $M\otimes \bZ_\ell(1)$ by $(F/q)^{v(x)}$ (for $\ell \neq p$),
\item $\tau$ acts on $M^0(1) \subset M\otimes\bZ_p(1)$ by $(\Nm_{K/\bQ_p} x) (F/q)^{v(x)}$,
\item $\tau$ acts on $M^1(1)\subset M\otimes\bZ_p(1)$ by $(F/q)^{v(x)}$,
\item $\tau$ acts on $M^2(1)\subset M\otimes\bZ_p(1)$ by $(\Nm_{K/\bQ_p} x)^{-1} (F/q)^{v(x)}$.
\end{enumerate}
(Note that by property (M4) these actions indeed preserve the $\bZ_p$-lattices $M^s(1)$).

On the other hand, the decomposition of $M\otimes \bZ_p$ induces a decomposition 
\[
	V_X \otimes \bQ_p = V_{-1} \oplus V_0 \oplus V_1
\]
with $\dim V_{-1} = \dim V_1 = 1$. The group $E\otimes\bQ_p$ acts on $V_X \otimes \bQ_p$ on $V_{-1}$ through the factor $E_v^\times\cong \bQ_p^\times$, and on $V_1$ trough $E_{\sigma v}^\times\cong \bQ_p^\times$. The inclusion of $E_v^\times \times E_{\sigma v}^\times \subset (E\otimes \bQ_p)^\times$ defines a subgroup
\[
	T_{v,\sigma v} = \{ (x_v,x_{\sigma v}) \in E_v^\times \times E_{\sigma v}^\times \, \mid \,
		x_v x_{\sigma v} = 1 \} \subset T(\bQ_p).
\]
The compatibility between  local and global class field theory and the definition of $\rec$ (see \S~\ref{sec:CM-models}) implies that the diagram
\[
\begin{tikzcd}
\Gal_K^\ab \arrow{rr}  & & \Gal_E^\ab \arrow{d}{\rec} \\
K^\times \arrow{u} \arrow{r} & T_{v,\sigma v} \arrow{r} & T(\bA_f)/T(\bQ)
\end{tikzcd}
\]
in which the map $K^\times \to T_{v,\sigma v}$ maps $x$ to $(\Nm_{K/\bQ_p}(x),\Nm_{K/\bQ_p}(x)^{-1})$
commutes. We conclude that
\[
	K^\times \longto \Gal_K^\ab \longto \Gal_E^\ab \overset{\rec}{\longto} T(\bA_f)/T(\bQ)
\]
maps an $x\in K^\times$ to the class of the element $\alpha = \alpha(x) \in T(\bA_f)$ satisfying
\begin{enumerate}
\item $\alpha_\ell=1$ for all $\ell \neq p$,
\item $\alpha_p$ acts on $V_X\otimes \bQ_p=V_{-1} \oplus V_{0} \oplus V_1$ by $(\Nm_{K/\bQ_p} x, 1, (\Nm_{K/\bQ_p} x)^{-1})$.
\end{enumerate}
Since $F/q$ lies in $T(\bQ)$, we see that $\alpha(x)$ and $\tau(x)$ define the same element in $T(\bA_f)/T(\bQ)$. This shows that $\rho$ satisfies the requirements of Theorem~\ref{thm:local-descent}, and we conclude that there is a model 
$\cX/K$ of $X$ whose $\Gal_K$-action on $\rH^2_\et(\cX_{\bar K},\,\hat\bZ) = M \otimes \hat\bZ$ is the prescribed one.

\medskip\noindent
\emph{Extension to $\cO_K$ and reduction to $k$.} By construction, the action of $\Gal_K$ on $\rH^2_\et(X_{\bar K},\,\bZ_\ell)$ is unramified. By Theorem \ref{thm:LM}, and since we are assuming $\cX$ satisfies ($\star$) there exists a finite unramified extension $K\subset K'$ so that $\cX' := \cX_{K'}$ has good reduction, and hence extends to a proper smooth $\fX'$ over $\cO_{K'}$. 

By Theorem~\ref{bigthm:etale-characterisation}, this model $\fX'$ is the canonical lift of its reduction, and hence by Proposition \ref{prop:line-bundles-lift} the map $\Pic \fX'_{\bar K} \to \Pic \fX_{\bar \bF_q}$ is surjective. We conclude with Proposition \ref{prop:CLL} that $X/K$ has a canonical smooth projective model $\fX/\cO_K$. Again Theorem \ref{bigthm:etale-characterisation} guarantees that $\fX$ is the canonical lift of its reduction $\fX_k$, and we see that the functor of Theorem \ref{bigthm:equivalence} maps $\fX_k$ to the given triple $(M,F,\cK)$.
\end{proof}

\subsection{Unconditional results}

As above, we fix an embedding $\iota\colon W(\bF_q) \to \bC$.

\begin{theorem}\label{thm:unconditional}
The functor $X \mapsto (M,F,\cK)$ restricts to an equivalence between the sub-groupoids consisting of: 
\begin{enumerate}
	\item  $X/\bF_q$ for which there is an ample $\cL \in \Pic X_{\bar \bF_q}$  with $\cL^2<p-4$,
	\item $(M,F,\cK)$ for which there exists a $\lambda \in M\cap \cK$
	satisfying $\lambda^2<p-4$.
\end{enumerate}
Assuming $p\geq 5$ it also restricts to an equivalence between 
\begin{enumerate}
	\item $X/\bF_q$ for which  $\Pic X_{\bar \bF_q}$ contains a hyperbolic lattice,
	\item $(M,F,\cK)$ for which $\NS(M,F)$ contains a hyperbolic lattice,
\end{enumerate}
and between
 \begin{enumerate}
	\item $X/\bF_q$ with $\rk \Pic X_{\bar \bF_q} \geq 12$,
	\item $(M,F,\cK)$  with $\rk \NS(M,F) \geq 12$.
\end{enumerate}
\end{theorem}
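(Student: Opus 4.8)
The plan is to deduce everything from Theorem~\ref{bigthm:equivalence}. Fully faithfulness there is unconditional, so it restricts at once to each of the three sub-groupoids; the only place where property~($\star$) enters the proof of essential surjectivity in \S\ref{sec:surjectivity} is the appeal to Theorem~\ref{thm:LM}. Hence it suffices to check two things in each case: that the functor carries one side of the claimed equivalence into the other, and that the model $\cX/K$ produced in \S\ref{sec:surjectivity} satisfies~($\star$) \emph{unconditionally}, so that Theorem~\ref{thm:LM} may be applied as before.

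That the functor respects the three constraints follows from Proposition~\ref{prop:F-Pic}, which supplies an isometry $\Pic X_{\bar\bF_q}\isomto\NS(M,F)$ taking the ample cone to $\cK$. Because (M5) puts $\cK$ inside $\NS(M,F)\otimes\bR$ and the union $\NS(M,F)\otimes\bQ=\bigcup_d\ker(F^d-q^d)$ stabilises, $F/q$ acts with finite order on $\NS(M,F)\otimes\bR$; consequently every element of $M$ lying in $\NS(M,F)\otimes\bR$ already lies in $\NS(M,F)$, so $M\cap\cK=\NS(M,F)\cap\cK$. Thus the existence of an ample class of square $<p-4$, of a hyperbolic plane in the Picard lattice, or of geometric Picard rank $\geq 12$ matches exactly the corresponding condition on $(M,F,\cK)$, and the functor sends each sub-groupoid into the intended target.

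For essential surjectivity I would repeat the construction of \S\ref{sec:surjectivity} verbatim up to the invocation of~($\star$). The surface $\cX/K$ there is a model of the complex K3 surface $X$, so $\Pic\cX_{\bar K}\cong\NS(X)=\NS(M,F)$ as lattices, compatibly with ample cones. Hence the constraint on the triple transfers to $\cX$: in case~(i) the class $\lambda$ becomes, after a finite extension of $K$ over which it is defined, an ample line bundle of square $<p-4$; in case~(ii) $\Pic\cX_{\bar K}$ contains a hyperbolic plane. In precisely these ranges property~($\star$) is known by the results of Liedtke and Matsumoto \cite{LiedtkeMatsumoto18} (resting on Matsumoto's good-reduction theorems), and since~($\star$) is a potential property the passage to a finite extension is harmless. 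Feeding this unconditional~($\star$) into Theorem~\ref{thm:LM} in place of the conjecture, the remainder of the argument in \S\ref{sec:surjectivity}---good reduction, the canonical-lift characterisation of Theorem~\ref{bigthm:etale-characterisation}, and Proposition~\ref{prop:CLL}---goes through unchanged and yields a smooth projective model $\fX/\cO_K$ whose reduction $\fX_k$ maps to $(M,F,\cK)$ and, by the previous paragraph, lies in the correct sub-groupoid.

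It remains to obtain case~(iii) from case~(ii), which I would do by pure lattice theory. The lattice $\NS(M,F)$ is a primitive, even sublattice of the unimodular K3 lattice $M\cong U^{3}\oplus E_8^{2}$, of signature $(1,\rho-1)$ with $\rho=\rk\NS(M,F)$; its discriminant group is isomorphic to that of the rank-$(22-\rho)$ transcendental lattice, hence has length at most $22-\rho$. When $\rho\geq 12$ this length is at most $\rho-2$, so Nikulin's criterion (see \cite[Ch.~14]{HuybrechtsK3}) lets one split off a copy of the hyperbolic plane; thus condition~(iii) implies condition~(ii), and the equivalence for Picard rank $\geq 12$ follows from the one for lattices containing a hyperbolic plane. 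The main obstacle in the whole argument is the central verification that the abstract numerical constraint on $\NS(M,F)$ really produces a $p$-adic surface $\cX$ to which the \emph{known} cases of~($\star$) literally apply---that the small-degree polarisation, resp.\ the hyperbolic plane, is available on a genuine model after at most a finite (harmless) extension, and that the hypotheses $p\geq 5$ in cases~(ii) and~(iii) are exactly those of the cited reduction theorems; once the identification $\Pic\cX_{\bar K}\cong\NS(M,F)$ is secured, the rest is a transcription of the conditional proof.
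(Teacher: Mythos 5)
Your proposal is correct and takes essentially the same route as the paper: both reduce everything to checking that the model $\cX/K$ constructed in \S~\ref{sec:surjectivity} satisfies ($\star$) unconditionally in each of the three ranges, using the identification $\Pic \cX_{\bar K}\cong \NS(M,F)$ (compatibly with ample cones) to transfer the numerical hypothesis and then invoking the known potential-good-reduction results --- the paper cites \cite[Thm.~1.1]{Matsumoto15} and \cite[\S~2]{Ito17} where you cite \cite{LiedtkeMatsumoto18}, but the mathematical input (Matsumoto's good-reduction theorems, with exactly the hypotheses $\cL^2<p-4$ resp.\ $p\geq 5$) is the same. Your only deviation is to rederive the rank~$\geq 12$ case from the hyperbolic-plane case by Nikulin's splitting criterion ($\NS(M,F)$ is even, primitive in the unimodular $M$, of signature $(1,\rho-1)$, with discriminant group of length $\leq 22-\rho\leq \rho-2$), rather than citing \cite[\S~2]{Ito17} for it directly; this is precisely the lattice-theoretic argument underlying that reference, so nothing is lost.
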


\begin{proof}
In view of Theorem~\ref{bigthm:equivalence} and Proposition~\ref{prop:ample-classes}, we only need to verify that any triple $(M,F,\cK)$ as in (ii) lies in the essential image of the functor $X\mapsto (M,F,\cK)$ on ordinary K3 surfaces. It suffices to show that the relevant $X$ over $K=\Frac W(\bF_q)$ occurring in the proof in \S~\ref{sec:surjectivity} satisfy ($\star$).

By \cite[Thm.~1.1]{Matsumoto15} and \cite[\S~2]{Ito17} we know that
any K3 surface over $X$ with unramified $\rH^2_\et(X_{\bar K},\,\bQ_\ell)$, and satisfying one of 
\begin{enumerate}
\item there is an ample $\cL \in \Pic X_{\bar K}$ with $\cL^2<p-4$,
\item $\Pic X_{\bar K}$ contains a hyperbolic plane and $p\geq 5$,
\item $\Pic X_{\bar K}$ has rank $\geq 12$ and $p\geq 5$,
\end{enumerate}
has potentially good reduction. In particular, any such K3 surface satisfies hypothesis ($\star$).  In all three cases the argument of \S~\ref{sec:surjectivity} goes through unconditionally.
\end{proof}

\end{document}